\renewenvironment{abstract}{\minisec{Abstract}}{\par\vspace{.1in}}
\newenvironment{keywords}{\minisec{Key Words}}{\par\vspace{.1in}}
\newenvironment{AMS}{\minisec{AMS subject classification}}{\par\vspace{.1in}}
\theoremstyle{plain}
\newtheorem{theorem}{Theorem}[section]
\newtheorem{lemma}[theorem]{Lemma}
\theoremstyle{remark}
\newtheorem{remark}[theorem]{Remark}
\newcommand{\TheTitle}{Finite element error estimates for normal derivatives on boundary concentrated meshes}
\newcommand{\lnh}{\lvert\ln h\rvert}
\title{\TheTitle}
\author{Johannes Pfefferer \thanks{Technical University of Munich, Department
    of Mathematics, Chair of Optimal Control, Boltzmannstra\ss e 3, 85748 Garching by Munich,
    Germany. pfefferer@ma.tum.de} 
\and Max Winkler\thanks{Chemnitz University of Technology, Faculty of
  Mathematics, Professorship Numerical Mathematics (Partial Differential
  Equations), Stra\ss e der Nationen 62, 09111 Chemnitz, Germany. max.winkler@mathematik.tu-chemnitz.de}
}
\date{}
\DeclareMathOperator*{\diam}{diam}
\DeclareMathOperator*{\dist}{dist}
\DeclareMathOperator*{\supp}{supp}
\DeclareMathOperator*{\essinf}{ess\,inf}
\DeclareMathOperator*{\esssup}{ess\,sup}
\newcommand{\C}{\mathcal C}
\newcommand{\e}{e}
\begin{document}

\maketitle

\begin{abstract}
This paper is concerned with approximations and related discretization error estimates for the normal derivatives of solutions of linear elliptic partial differential equations. In order to illustrate the ideas, we consider the Poisson equation with homogeneous Dirichlet boundary conditions and use standard linear finite elements for its discretization. The underlying domain is assumed to be polygonal but not necessarily convex. Approximations of the normal derivatives are introduced in a standard way as well as in a variational sense. On general quasi-uniform meshes, one can show that these approximate normal derivatives possess a convergence rate close to one in $L^2$ as long as the singularities due to the corners are mild enough. Using boundary concentrated meshes, we show that the order of convergence can even be doubled in terms of the mesh parameter while increasing the complexity of the discrete problems only by a small factor.
As an application, we use these results for the numerical analysis of Dirichlet boundary control problems, where the control variable corresponds to the normal derivative of some adjoint variable.
\end{abstract}

\begin{keywords}
 finite element error estimates, local mesh refinement, boundary concentrated meshes, Dirichlet boundary control, surface flux, normal derivatives
\end{keywords}

\begin{AMS}
  35J05, 49J20, 65N15, 65N30
\end{AMS}

\section{Introduction}
The main purpose of this paper is to investigate convergence properties of two types of approximations to the normal derivative of the weak solution $u$ of the Poisson equation
\[
-\Delta u = f\quad\mbox{in}\quad \Omega,\qquad u=0\quad\mbox{on}\quad\partial\Omega,
\]
posed in polygonal domains $\Omega$. The first approximation, denoted by $\partial_n u_h$, is defined in a classical way, whereas the second one, denoted by $\partial_n^hu_h$, is introduced in a variational sense. Both of them require the knowledge about discrete solutions $u_h$ to the Poisson equation. In this regard and also to illustrate the ideas, we choose standard linear finite elements for the discretization.

In the recent past, error estimates for the two different approximations have been established. In general, the quality of the estimates do not only depend on the regularity of the solution but also on the structure of the underlying computational meshes. We emphasize that in the present case of polygonal domains the regularity of the solution may additionally be lowered by the appearance of corner singularities even though the input datum $f$ is arbitrarily smooth. In the following, for a concise discussion of the results from literature, we assume that the regularity of the solution is only limited by the singular terms coming from the corners and not by rough input data.

On general quasi-uniform meshes, the classical and the discrete variational
normal derivative converge in $L^2(\partial\Omega)$ with the rate $s=1$ (up to
logarithmic factors), provided that the largest interior angle $\omega$ in the
domain is less than $2\pi/3$. For larger interior angles the convergence rate
is reduced due to the corner singularities. More precisely, the convergence
rate fulfills $s<\pi/\omega - 1/2$. The corresponding results for the
classical approximation $\partial_n u_h$ of the normal derivative have been
discussed in \cite{HMW14,MW12}, whereas related results for the discrete
variational normal derivative $\partial_n^h u_h$ can be found in
\cite{AMPR16}. On certain superconvergence meshes, where, roughly speaking,
neighboring elements almost form a parallelogram, the convergence rate for the
discrete variational normal derivative $\partial_n^h u_h$ can be improved to
$s=3/2$ (again up to logarithmic factors) if the largest interior angle is less than $\pi/2$. Otherwise, the convergence rate $s$ satisfies the condition from before, cf. \cite{AMPR16}. The convergence rates $s$ for the different approximations of the normal derivatives are illustrated in Figure \ref{fig1} depending on the largest interior angle $\omega$ and the structure of the underlying computational meshes.

\psset{yunit=0.9, xunit=0.9}
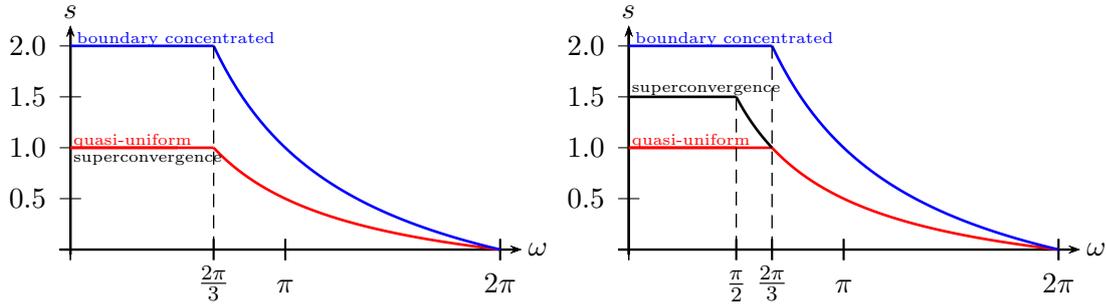
\begin{figure}
	\subfloat{
		\begin{pspicture}[showgrid=false](-1,-1)(9.4,3)
		\psline(2.094,-4pt)(2.094,4pt)
		\psline(3.1415927,-4pt)(3.1415927,4pt)
		\psline(6.283,-4pt)(6.283,4pt)
		\uput[-0](-0.3,3.5){$s$}
		\uput[-0](6.5,0){$\omega$}
		\rput[t](2.094,-6pt){$\frac{2\pi}{3}$}
		
		\psaxes[yunit=1.5,xunit=3.141,showorigin=false,trigLabels,Dy=0.5]{->}(0,0)(-0.05,-0.05)(2.1,2.2)
		\psplot[yunit=1.5,linecolor=red,linewidth=1pt]{0}{1.571}{1}
		\psplot[yunit=1.5,linecolor=red,linewidth=1pt]{0}{2.094}{1}
		\psplot[yunit=1.5,linecolor=red,linewidth=1pt]{2.094}{6.283}{3.141 x div 0.5 sub}
		\psplot[yunit=1.5,linecolor=blue,linewidth=1pt]{0}{2.094}{2}
		\psplot[yunit=1.5,linecolor=blue,linewidth=1pt]{2.094}{6.283}{6.283 x div 1 sub}
		\psline[yunit=1.5,linestyle=dashed,linewidth=0.5pt](2.094,0)(2.094,2)
		\uput[-0](-0.15,1.6){\color{red}{\tiny quasi-uniform}}
		\uput[-0](-0.15,1.32){\color{black}{\tiny superconvergence}}
		\uput[-0](-0.1,3.10){\color{blue}{\tiny boundary concentrated}}
		\end{pspicture}
	}	
	\subfloat{
		\begin{pspicture}[showgrid=false](-1,-1)(9.4,3)
		\psline(1.571,-4pt)(1.571,4pt)
		\psline(2.094,-4pt)(2.094,4pt)
		\psline(3.1415927,-4pt)(3.1415927,4pt)
		\psline(6.283,-4pt)(6.283,4pt)
		\uput[-0](-0.3,3.5){$s$}
		\uput[-0](6.5,0){$\omega$}
		\rput[t](1.571,-8pt){$\frac{\pi}{2}$}
		\rput[t](2.094,-6pt){$\frac{2\pi}{3}$}

		\psaxes[yunit=1.5,xunit=3.141,showorigin=false,trigLabels,Dy=0.5]{->}(0,0)(-0.05,-0.05)(2.1,2.2)
		\psplot[yunit=1.5,linecolor=red,linewidth=1pt]{0}{1.571}{1}
		\psplot[yunit=1.5,linecolor=red,linewidth=1pt]{0}{2.094}{1}
		\psplot[yunit=1.5,linecolor=red,linewidth=1pt]{2.094}{6.283}{3.141 x div 0.5 sub}
		\psplot[yunit=1.5,linecolor=black,linewidth=1pt]{0}{1.571}{1.5}
		\psplot[yunit=1.5,linecolor=blue,linewidth=1pt]{0}{2.094}{2}
		\psplot[yunit=1.5,linecolor=black,linewidth=1pt]{1.571}{2.094}{3.141 x div 0.5 sub}
		\psplot[yunit=1.5,linecolor=blue,linewidth=1pt]{2.094}{6.283}{6.283 x div 1 sub}
		\psline[yunit=1.5,linestyle=dashed,linewidth=0.5pt](1.571,0)(1.571,1)
		\psline[yunit=1.5,linestyle=dashed,linewidth=0.5pt](1.571,1.15)(1.571,1.5)
		\psline[yunit=1.5,linestyle=dashed,linewidth=0.5pt](2.094,0)(2.094,2)
		\uput[-0](-0.15,1.6){\color{red}{\tiny quasi-uniform}}
		\uput[-0](-0.15,2.35){\color{black}{\tiny superconvergence}}
		\uput[-0](-0.1,3.10){\color{blue}{\tiny boundary concentrated}}
		\end{pspicture}	
	}
	\caption{\label{fig1} Convergence rates for $\partial_n u_h$ and $\partial_n^hu_h$ depending on $\omega$ for the different types of meshes.}
\end{figure}

As the quantities of interest live on the boundary, it might be promising to appropriately refine the mesh towards the boundary.
In this regard, we consider a certain class of boundary concentrated meshes. These are isotropically refined towards the boundary such that the element diameter at the boundary is of order $h^2$ with $h$ denoting the maximal element diameter in the interior of the domain. As we will see, the number of elements corresponding to such meshes is of order $h^{-2}\left|\ln h\right|$ and no longer of order $h^{-2}$ as in case of quasi-uniform meshes. However, with this slight increase in the number of elements, it is possible to double the convergence rates of the two different approximations in terms of the maximal element diameter $h$ (compared to general quasi-uniform meshes). More precisely, the convergence rate $s$ in $L^2(\partial\Omega)$ is two (up to logarithmic factors) as long as the largest interior angle is less than $2\pi/3$. For larger interior angles we obtain a rate $s$ fulfilling $s<2(\pi/\omega-1/2)$, see Figure~\ref{fig1} for an illustration.

Our proof of error estimates for the approximating normal derivatives heavily
relies on finite element error estimates in weighted $L^2(\Omega)$-norms. Thereby, the weight is a regularized distance function with respect to
the boundary. In order to bound these finite element errors on graded meshes
appropriately, one has to be able to handle the weights within the
estimates. This requires to establish regularity results in weighted Sobolev
spaces with the aforementioned regularized distance function as weight. Based
on this, the weighted $L^2$-errors can then be treated by an adapted duality
argument employing a dyadic decomposition of the domain with respect to its
boundary and local energy norm estimates on the subsets. These techniques are
known for instance from maximum norm error estimates \cite{ASR09,SW79,Sir10}
or from finite element error estimates on the boundary for the Neumann problem
\cite{APR13,Win15}. However, in all these references the weights, and hence the
dyadic decomposition used in the proofs, are related to the corners of the polygonal domain.

As an application of the discrete variational normal derivative
$\partial_n^hu_h$ we consider Dirichlet boundary control problems with $L^2(\partial\Omega)$-regularization, where this
type of normal derivative naturally arises in the discrete optimality system. 
In the last decade, Dirichlet
boundary control problems have been under active research. We start with mentioning the contribution \cite{CR06}, where a control constrained problem subject to a semilinear elliptic equation is considered. There, a convergence order of $s<\min(1,\pi/2\omega)$ is proved for the error of the controls in $L^2(\partial\Omega)$. This means a rate close to one is only possible if the largest interior angle is less than $\pi/2$. However, non-convex domains are excluded in that reference. Later on, in \cite{MRV13}, comparable results for the controls are provided in case of linear problems without control constraints. In addition, the authors of this reference show that the states exhibit better convergence properties. The proof relies on a duality argument and estimates for the controls in weaker norms than $L^2(\partial\Omega)$. We note that, to the best of our knowledge, such an argumentation is restricted to problems without control constraints. For a certain time, this was the state of the art. Nevertheless, numerical experiments indicated that the controls converge with an order close to one also for larger interior angles, and can even achieve a rate close to $3/2$ if the underlying meshes satisfy certain superconvergence properties.
For smooth domains $\Omega$, where no corner singularities appear, 
these convergence rates for general and
superconvergence meshes are shown in \cite{DGH09}.
Therein, the domain is approximated by a sequence of polygons, on which
the discrete approximations are posed.
The first contributions dealing with quasi-optimal convergence rates for quasi-uniform and superconvergence meshes in
polygonal domains are \cite{AMPR15} and \cite{AMPR16}. More precisely, in \cite{AMPR15}, accurate regularity results are derived for the solution of the optimal control problem. In \cite{AMPR16}, these are applied within the proofs of the error estimates for the control. The rates of convergence for the controls in the unconstrained case coincide with those from above for the discrete variational normal derivative $\partial_n^h u_h$, as such an error for the adjoint state is one of the dominating error contributions. In these references, the control constrained case is discussed is as well. While in convex domains the error estimates for the controls are similar to those in the unconstrained case (depending on the specific choice of the control bounds), in non-convex domains the convergence rates are considerably larger. This is due to a smoothing effect of the control bounds on the continuous solution. For a more detailed discussion, we refer to the introduction of \cite{AMPR16}. In the present paper, we only consider the case without control constraints. However, we notice that the estimates can be extend to the control constrained case as well. In the unconstrained case, if we use boundary concentrated meshes, we obtain a rate $s$ of two (up to logarithmic factors) as long as the interior angles are less than $2\pi/3$. Otherwise, we get the reduced rate $s<2(\pi/\omega-1/2)$. This is quite natural as we have already observed that the error for the discrete variational normal derivative is the limiting term within the error estimates.

Finally, we notice that there is an alternative approach to the $L^2(\partial\Omega)$-regularization. 
Several articles, for instance \cite{JSW18,OPS10,OPS13}, consider a regularization in the
$H^{1/2}(\partial\Omega)$-norm instead. This guarantees a higher regularity of the
solution. In numerical experiments it turns out that the convergence rate in case of a standard discretization on quasi-uniform meshes seems to be
one order higher than for the $L^2(\partial\Omega)$-regularization in case of quasi-uniform meshes, this is
$s=2$ (up to logarithmic factors) if $\omega < 2\pi/3$, and $s<\pi/\omega+1/2$
if $\omega\ge 2\pi/3$. To the best of our knowledge, the corresponding estimates in the literature show a lower rate of convergence. This is mainly due to the fact that either standard techniques are used to bound the error for the discrete variational normal derivative or lower regularity of the data is assumed. A proof of the convergence rates stated above will be subject of a forthcoming article.

The paper is organized as follows: In Section \ref{sec:regularity}, we introduce the variational formulation to the Poisson equation and establish regularity results in weighted Sobolev spaces, where the weight is a regularized distance function with respect to the boundary. Moreover, we collect several regularity results in different weighted Sobolev spaces from the literature for the later error analysis. The discretization of the Poisson equation and the boundary concentrated meshes are introduced at the beginning of Section \ref{sec:weighted_estimate}. Moreover, the discretization error estimates in weighted $L^2(\Omega)$-norms are proven in this section. These are applied in Section \ref{sec:surfaceflux} in order to derive the error estimates for the two different approximations to the normal derivative of the solution of the Poisson equation. In addition, numerical experiments are included in this section which underline the theoretical findings. The numerical analysis for Dirichlet boundary control problems is outlined in Section \ref{sec:control}. Moreover, numerical examples are presented which exactly show the convergence rates from the theory.

In the following $c$ will denote a generic constant which is always independent of the mesh parameter $h$. We will use the notation $a\sim b$ to indicate that $a\le c b$ and $b\le c a$.

\section{Weighted regularity for elliptic problems}\label{sec:regularity}

Let us first introduce some notation which is used in this paper. 
We consider computational domains $\Omega\subset\mathbb R^2$ that are bounded by a polygon $\Gamma:=\partial\Omega$.
The corner points are numerated counter-clockwise and are denoted by $\boldsymbol c_j$, $j\in\mathcal C:=\{1,\ldots,d\}$. The interior angle at a corner point $\boldsymbol c_j$ is denoted by $\omega_j$. The index set $\mathcal{C}_{non}$ collects all indices $j$ with $\omega_j>\pi$, i.e., the indices corresponding to non-convex corners.
The boundary edge having endpoints $\boldsymbol c_j$ and $\boldsymbol c_{j+1}$ ($\boldsymbol c_{d+1}:=\boldsymbol c_1$) is denoted by $\Gamma_j$, $j\in\mathcal C$.
The classical Sobolev spaces are denoted as usual by $W^{k,p}(\Omega)$ for $k\in\mathbb N_0$, $p\in[1,\infty]$, and by $H^k(\Omega)$ in case of $p=2$.
The corresponding norms are denoted by $\|\cdot\|_{W^{k,p}(\Omega)}$ and  $\|\cdot\|_{H^{k}(\Omega)}$, respectively. By $H^k_0(\Omega)$ we denote the completion of $C_0^\infty(\Omega)$ functions with respect to $\|\cdot\|_{H^{k}(\Omega)}$.
Moreover, we use the notation $\left\|\cdot\right\|_{L^2(\Omega)}$ and $(\cdot,\cdot)_{L^2(\Omega)}$ for the norm and the inner product in $L^2(\Omega)=H^0(\Omega)$. An analogous notation is used for the spaces defined on the boundary.

For $f\in L^2(\Omega)$ we consider the Poisson equation in variational form:
\begin{equation}\label{eq:weak_form_1}
\mbox{Find}\ u\in H_0^1(\Omega)\colon\quad (\nabla u,\nabla v)_{L^2(\Omega)} = (f,v)_{L^2(\Omega)}\qquad \forall v\in H_0^1(\Omega).
\end{equation}
We introduce the regularized distance function 
\begin{equation}\label{eq:weight}
\sigma(x) := d_I + \rho(x)\quad\mbox{with}\quad \rho(x):=\dist(x,\Gamma):=\inf_{y\in\Gamma} |x-y|,
\end{equation}
and some number $d_I\in(0,\e^{-1})$ exactly specified later.
In the following we investigate regularity results in weighted spaces containing $\sigma$ as weight function.
\begin{lemma}\label{lem:weighted_reg_h1}
 There exists a constant $c>0$ independent of $d_I$ such that 
  \begin{align*}
   &(i) \quad&  \|\sigma^{-1} u\|_{L^2(\Omega)} + \|\nabla u\|_{L^2(\Omega)} &\le c \|\sigma f\|_{L^2(\Omega)},\\[.5em]
   &(ii) \quad&  \|\sigma^{-1/2} u\|_{L^2(\Omega)} + |\ln d_I|\|\sigma^{1/2}\nabla u\|_{L^2(\Omega)} &\le c |\ln d_I|^2\|\sigma^{3/2} f\|_{L^2(\Omega)}, \quad\mbox{if}\ \Omega\ \mbox{is convex}.
  \end{align*}  
\end{lemma}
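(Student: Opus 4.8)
The plan is to derive both weighted estimates from energy-type identities obtained by testing the weak formulation with carefully chosen weighted multiples of $u$. The weight $\sigma = d_I + \rho$ is bounded below by $d_I$ and above by a constant depending on $\diam\Omega$, and crucially $|\nabla\sigma| = |\nabla\rho| \le 1$ almost everywhere since $\rho$ is $1$-Lipschitz. These two facts — the lower bound $\sigma \ge d_I$ and the gradient bound $|\nabla\sigma|\le 1$ — are what let me differentiate powers of $\sigma$ and control the resulting commutator terms. The appearance of $|\ln d_I|$ factors in part $(ii)$ signals that the estimate is borderline and that logarithmic losses enter precisely where the weight power hits a critical exponent; I expect these to come from a Hardy-type inequality whose constant degenerates.

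For part $(i)$, I would test \eqref{eq:weak_form_1} with $v = \sigma^{-2} u$ (noting $\sigma^{-2}u \in H^1_0(\Omega)$ since $\sigma$ is bounded away from zero and $u$ vanishes on $\Gamma$). This produces $(\nabla u, \nabla(\sigma^{-2}u))_{L^2(\Omega)} = (f,\sigma^{-2}u)_{L^2(\Omega)}$. Expanding the left side via the product rule gives $\|\sigma^{-1}\nabla u\|_{L^2(\Omega)}^2$ plus a cross term of the form $-2(\sigma^{-3}(\nabla\sigma\cdot\nabla u), u)_{L^2(\Omega)}$, which I bound using $|\nabla\sigma|\le 1$ and Cauchy--Schwarz. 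The right side is estimated by pairing $\sigma f$ with $\sigma^{-3}u$. The decisive ingredient is a weighted Hardy inequality controlling $\|\sigma^{-1}u\|_{L^2(\Omega)}$ by $\|\nabla u\|_{L^2(\Omega)}$ (plus lower-order terms), valid because $u$ vanishes on the boundary towards which $\sigma$ degenerates; combining this with the energy identity and absorbing the cross term yields both $\|\sigma^{-1}u\|_{L^2(\Omega)}$ and $\|\nabla u\|_{L^2(\Omega)}$ on the left against $\|\sigma f\|_{L^2(\Omega)}$ on the right.

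For part $(ii)$, which needs convexity, the strategy is analogous but with the opposite sign of weight power. Here convexity is essential because it grants the $H^2$-regularity bound $\|u\|_{H^2(\Omega)}\le c\|f\|_{L^2(\Omega)}$ and, more importantly, the pointwise nonnegativity/concavity structure of the distance function: for a convex domain $\rho$ is concave, so $\Delta\rho\le 0$ in the distributional sense, and this sign is what makes the critical weighted estimate close rather than diverge. I would test with a positive power such as $v=\sigma u$ or integrate a weighted energy identity for $\sigma^{1/2}\nabla u$, then invoke a Hardy inequality at the critical exponent where the naive constant blows up; truncating or splitting the domain into the dyadic annular regions $\{\rho \sim 2^{-k}\}$ down to scale $d_I$ and summing over the $O(|\ln d_I|)$ relevant scales is what manufactures the $|\ln d_I|$ powers. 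I expect the main obstacle to be exactly this critical-exponent bookkeeping: one must track where each logarithmic factor is generated, ensure the concavity of $\rho$ is used with the correct sign to prevent an uncontrolled boundary contribution, and verify that the interplay between the $\sigma^{-1/2}$, $\sigma^{1/2}\nabla u$, and $\sigma^{3/2}f$ scalings is consistent so that the powers of $|\ln d_I|$ land exactly as stated.
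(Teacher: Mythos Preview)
Your proposal for part~(i) contains a genuine gap: testing with $v=\sigma^{-2}u$ produces the quantity $\|\sigma^{-1}\nabla u\|_{L^2(\Omega)}^2$ together with a cross term controlled by $\|\sigma^{-2}u\|_{L^2(\Omega)}\|\sigma^{-1}\nabla u\|_{L^2(\Omega)}$, while the right-hand side becomes $\|\sigma f\|_{L^2(\Omega)}\|\sigma^{-3}u\|_{L^2(\Omega)}$. None of these match the Hardy inequality you actually invoke, namely $\|\sigma^{-1}u\|_{L^2(\Omega)}\le c\|\nabla u\|_{L^2(\Omega)}$; the exponents are off by one or two throughout, and the argument does not close uniformly in $d_I$. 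Moreover, part~(i) is asserted \emph{without} convexity, so your concavity-of-$\rho$ mechanism is unavailable here. The paper avoids all of this by testing simply with $v=u$, which gives $\|\nabla u\|_{L^2(\Omega)}^2\le\|\sigma f\|_{L^2(\Omega)}\|\sigma^{-1}u\|_{L^2(\Omega)}$, and then proves the needed Hardy inequality $\|\sigma^{-1}u\|_{L^2(\Omega)}\le c\|\nabla u\|_{L^2(\Omega)}$ by hand: the domain is partitioned into strips $\Omega_{\Gamma_i}$ attached to each edge (where $\rho$ equals the distance to that edge and a one-dimensional integration by parts in the normal coordinate $y_i$ does the job) and sectors $\Omega_{\boldsymbol c_j}$ at non-convex corners (handled via a Poincar\'e inequality in the angular variable).

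For part~(ii) your instincts are closer. The paper also tests with $v=\sigma u$, and the key sign condition $\int_\Omega u\,\nabla u\cdot\nabla\sigma\ge 0$ is indeed equivalent, via one integration by parts, to the distributional inequality $\Delta\rho\le 0$ that you cite from convexity; the paper obtains this sign instead by the explicit edge decomposition (in a convex domain there are no $\Omega_{\boldsymbol c_j}$ sectors, and on each $\Omega_{\Gamma_i}$ the term reduces to a nonnegative trace on the interior skeleton). The $|\ln d_I|$ factors, however, do not arise in the paper from a dyadic summation over $O(|\ln d_I|)$ scales. They come from a second integration by parts in the normal variable on each $\Omega_{\Gamma_i}$: writing $(d_I+y_i)^{-1}=\partial_{y_i}\ln(d_I+y_i)$ converts $\|\sigma^{-1/2}u\|_{L^2(\Omega)}^2$ into a skeleton trace term plus a term bounded by $|\ln d_I|^2\|\sigma^{1/2}\nabla u\|_{L^2(\Omega)}^2$, and a kick-back argument closes the estimate. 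Finally, the $H^2$-regularity bound you mention plays no role in this lemma.
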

 \begin{figure}
  \begin{center}
  \includegraphics[width=.5\textwidth]{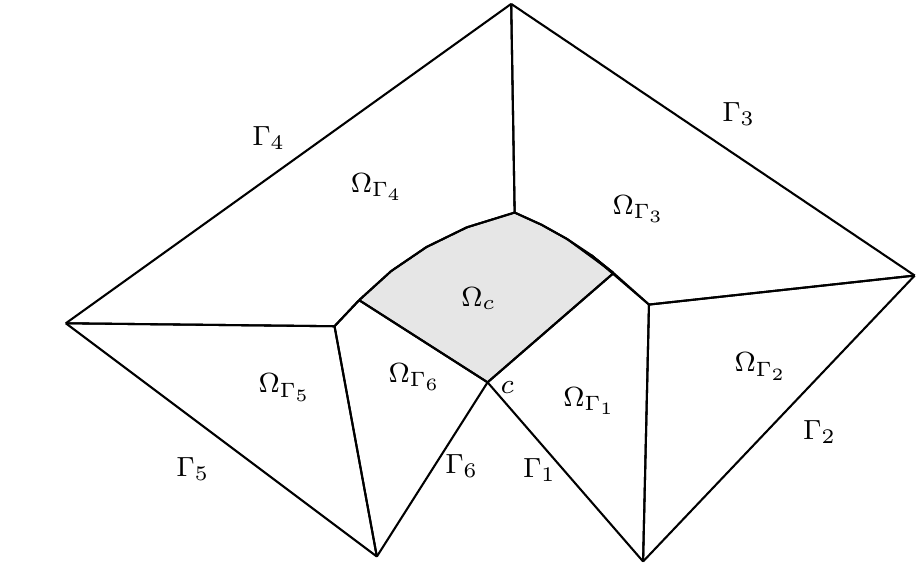}
  \end{center}
  \caption{Decomposition of $\Omega$ into the sets $\Omega_i$, $i\in\mathcal C$, and $\Omega_{\boldsymbol c}$.}
  \label{fig:decomposition}
 \end{figure}
 \begin{proof}
 As illustrated in Figure~\ref{fig:decomposition} we associate to each edge $\Gamma_i$, $i\in\mathcal C$, 
 the subsets
 \[
  \Omega_{\Gamma_i}:=\{x\in \Omega\colon \rho(x) = \dist(x,\Gamma_i)\},
 \]
 and to each non-convex corner $\boldsymbol c_j$ with $j\in\mathcal C_{\text{non}}$ the subsets
 \[
  \Omega_{\boldsymbol c_j}:=\{x\in\Omega\colon \rho(x) = |x-\boldsymbol c_j|\}
 \]
such that
\[
	\bar \Omega=\left(\bigcup_{i=1}^d\bar\Omega_{\Gamma_i}\right)\cup\left(\bigcup_{j\in\mathcal C_{\text{non}}}\bar\Omega_{\boldsymbol c_j}\right).
\]
 
 For each set $\Omega_{\Gamma_i}$, we introduce the local coordinates
 $(x_i,y_i)^\top = F_i^{-1}(x,y)$ with an affine linear map $F_i(x_i,y_i):=
 B_i (x_i, y_i)^\top + b_i$. Here, $B_i\in\mathbb R^{2\times 2}$ is a rotation matrix and $b_i\in\mathbb R^2$ a translation vector
 chosen in such a way that $F_i(0,0) = \boldsymbol c_i$ 
 and $F_i(|\Gamma_i|,0) = \boldsymbol c_{i+1}$.
 Moreover, we introduce the bounds $\overline x_i$ and $\overline y_i(x_i)$ such that $\Omega_{\Gamma_i}$ can be parameterized in local coordinates by
 \begin{equation}\label{eq:def_omega_i}
  \Omega_{\Gamma_i} = \{(x,y)^\top = F_i(x_i,y_i) \in\mathbb R^2\colon x_i\in (0,\overline x_i),\ y_i\in (0,\overline y_i(x_i))\}.
 \end{equation}
 The regularized distance function satisfies for $(x,y)\in\Omega_{\Gamma_i}$
 \begin{equation}\label{eq:weight_local}
 \sigma(x,y) = d_I + y_i(x,y)\quad\mbox{and}\quad \nabla \sigma(x,y)=B_i \begin{pmatrix}0\\1\end{pmatrix}.
 \end{equation}
 Moreover, we write $u_{\Gamma_i}(x_i,y_i) = u(F_i(x_i,y_i))$ and confirm that
 \[
 	\nabla_{\Gamma_i} u_{\Gamma_i}(x_i,y_i)=B_i^\top\nabla u(F_i(x_i,y_i)) \quad \text{with }\nabla_{\Gamma_i} = (\partial/\partial x_i, \partial/\partial y_i)^\top.
 \]
 
 To describe the sets $\Omega_{\boldsymbol c_j}$, we instead use polar coordinates $r_j(x,y)$ and $\varphi_j(x,y)$ located at the corner $c_j$ such that $(r_j(\boldsymbol c_{j+1}),\varphi_j(\boldsymbol c_{j+1}))^\top=(|\Gamma_j|,0)^\top$.
 Then, we find a representation of the form
 \[
  \Omega_{\boldsymbol c_j} = \{(x,y)^\top=(r_j\cos\varphi_j,r_j\sin\varphi_j)^\top\in \mathbb R^2 \colon \varphi_j\in\left(\frac\pi2,\omega_j-\frac\pi2\right),\ r_j\in (0,\bar r_j(\varphi_j))\}
 \]
 with an appropriate function $\bar r_j$.
 Within $\Omega_{\boldsymbol c_j}$ we write $u_{\boldsymbol c_j}(r_j,\varphi_i) = u(r_j\cos\varphi_j,r_j\sin\varphi_j)$. There is the relation
 \[
	 \nabla_{\boldsymbol c_j}u_{\boldsymbol c_j}(r_j,\varphi_j)= \left(\begin{array}{cc}\cos\varphi_j & \sin\varphi_j\\ -r_j\sin\varphi_j & r_j\cos\varphi_j\end{array}\right)\nabla u(r_j\cos\varphi_j,r_j\sin\varphi_j)
 \]
 with $\nabla_{\boldsymbol c_j}=(\partial/\partial r_j, \partial/\partial \varphi_j)^\top$.
 Moreover, the weight function $\sigma$ possesses for $(x,y)\in\Omega_{\boldsymbol c_j}$ the representation
 \[\sigma(x,y) = d_I + r_j(x,y).\]

 The result $(i)$ follows from the weak formulation of \eqref{eq:weak_form_1} and the Cauchy-Schwarz inequality:
 \begin{equation}\label{eq:estimate_gradient}
  \|\nabla u\|_{L^2(\Omega)}^2 = (\nabla u,\nabla u)_{L^2(\Omega)} = (f,u)_{L^2(\Omega)} \le \|\sigma f\|_{L^2(\Omega)} \|\sigma^{-1} u\|_{L^2(\Omega)}.
 \end{equation}
 Once we have shown $\|\sigma^{-1} u\|_{L^2(\Omega)}\leq c\|\nabla u\|_{L^2(\Omega)}$, the result is proven. For that purpose, we consider the sub-domains $\Omega_{\Gamma_i}$ and $\Omega_{\boldsymbol c_i}$ separately.
 Integration by parts using the local coordinates $(x_i,y_i)$ and the fact that $u|_\Gamma\equiv 0$ implies together with the Cauchy-Schwarz inequality
 \begin{align*}
 &\frac12\|\sigma^{-1}u\|_{L^2(\Omega_{\Gamma_i})}^2 = \frac12\int_0^{\bar x_i} \int_0^{\bar y_i(x_i)} \frac{u_{\Gamma_i}^2(x_i,y_i)}{(d_I+y_i)^2} \mathrm dy_i \mathrm dx_i \\
  &\quad= -\frac12\int_0^{\bar x_i} \left. \frac{u_{\Gamma_i}^2(x_i,y_i)}{d_I+y_i} \right|_{y_i=0}^{\bar y_i(x_i)} \mathrm dx 
  + \int_0^{\bar x_i}\int_0^{\bar y_i(x_i)} \frac{u_{\Gamma_i}(x_i,y_i) \partial_{y_i} u_{\Gamma_i}(x_i,y_i)}{d_I+y_i}\mathrm dy_i \mathrm dx_i \\
  &\quad \le \|\sigma^{-1} u\|_{L^2(\Omega_{\Gamma_i})}\|\nabla_{\Gamma_i} u_{\Gamma_i}\|_{L^2(\Omega_{\Gamma_i})}=\|\sigma^{-1} u\|_{L^2(\Omega_{\Gamma_i})}\|\nabla u\|_{L^2(\Omega_{\Gamma_i})}.
 \end{align*}
 In case of the sub-domains $\Omega_{\boldsymbol c_j}$, we first use the property $\sigma(x) \ge r_j(x)$. In a second step we enlarge the domain to a circular sector with radius $\hat r_j=\max_{\varphi_j}\bar r_j(\varphi_j)$ and $\varphi_j\in(0,\omega_j)$ containing $\Omega_{\boldsymbol c_j}$. Afterwards, we use the fact that $u|_\Gamma\equiv 0$ in combination with a Poincar\'e type inequality on the enlarged domain. This leads to
 \begin{align*}
  \|\sigma^{-1} &u \|_{L^2(\Omega_{\boldsymbol c_j})}^2\leq \int_{\frac{\pi}{2}}^{\omega_j-\frac{\pi}{2}}\int_0^{\bar r_j(\varphi_j)} r_j^{-1}u_{\boldsymbol c_j}(r_j,\varphi_i)^2 \mathrm d r_j\mathrm d\varphi_j\le \int_{0}^{\omega_j}\int_0^{\hat r_j} r_j^{-1}u_{\boldsymbol c_j}(r_j,\varphi_i)^2 \mathrm d r_j\mathrm d\varphi_j\\  
  &\le c\int_0^{\hat r_j} \int_{0}^{\omega_j} r_j^{-1}\left(\partial_{\varphi_j}u_{\boldsymbol c_j}(r_j,\varphi_j)\right)^2 \mathrm d\varphi_j\mathrm d r_j \le c\int_0^{\hat r_j} \int_{0}^{\omega_j} r_j|\nabla u(r_j\cos\varphi_j,r_j\sin\varphi_j)|^2 \mathrm d\varphi_j\mathrm d r_j \\
  &\le c\|\nabla u\|_{L^2(\Omega)}^2.
 \end{align*}  
 Summation over all subsets $\Omega_{\Gamma_i}$, $i\in\mathcal C$, and $\Omega_{\boldsymbol c_j}$, $j\in\mathcal C_{\text{non}}$, yields the desired estimate
\begin{equation}\label{eq:poincare}
  \|\sigma^{-1}u\|_{L^2(\Omega)} \le c \|\nabla u\|_{L^2(\Omega)}.
\end{equation}

 To show the second estimate $(ii)$, we apply the Leibniz rule:
 \begin{equation}\label{eq:weighted_h1_reg}
  \|\sigma^{1/2}\nabla u\|_{L^2(\Omega)}^2 = \int_\Omega \sigma \nabla u\cdot \nabla u = \int_\Omega \nabla u\cdot\nabla (\sigma u) - \int_\Omega u\nabla u\cdot\nabla\sigma.
 \end{equation}
 The variational formulation \eqref{eq:weak_form_1} with $v:=\sigma u$ leads to
 \begin{equation}\label{eq:weighted_h1_reg_1}
  \int_\Omega \nabla u\cdot\nabla (\sigma u) = (f,\sigma u)_{L^2(\Omega)} \le c\|\sigma^{3/2} f\|_{L^2(\Omega)} \|\sigma^{-1/2} u\|_{L^2(\Omega)}.
 \end{equation}
 For the second term on the right-hand side of \eqref{eq:weighted_h1_reg}
 we get from \eqref{eq:weight_local}
 \begin{equation}\label{eq:weighted_h1_reg_2}
  \int_\Omega u\nabla u\cdot\nabla\sigma 
  = \sum_{i=1}^d \int_{\Omega_{\Gamma_i}} u_{\Gamma_i}(x_i,y_i)\nabla_{\Gamma_i} u_{\Gamma_i}(x_i,y_i)\cdot \begin{pmatrix}0\\1\end{pmatrix} \mathrm dx_i\mathrm dy_i\\
  = \sum_{i=1}^d \int_{\Omega_{\Gamma_i}} \frac12 \partial_{y_i} u_{\Gamma_i}(x_i,y_i)^2\mathrm dx_i\mathrm dy_i .
 \end{equation}
 Integration by parts and exploiting the fact that $u$ vanishes on $\Gamma$ yields
 \begin{align*}
 &\phantom{\le}\int_0^{\overline x_i}\int_0^{\overline y_i(x_i)}  \frac12 \partial_{y_i} u_{\Gamma_i}(x_i,y_i)^2\mathrm dy_i\mathrm dx_i 
 =\frac12 \int_0^{\overline x_i} u_{\Gamma_i}^2(x_i,\overline y_i(x_i))\mathrm dx_i \\
 &\ge \frac{c_*}2 \int_0^{\overline x_i} u_{\Gamma_i}^2(x_i,\overline y_i(x_i))\sqrt{1+\overline y_i'(x_i)^2}\mathrm dx_i 
 = \frac{c_*}2 \|u\|_{L^2(\partial\Omega_{\Gamma_i}\setminus\Gamma)}^2.
 \end{align*}
 The constant \[c_*:=\min_{i\in\mathcal C}\essinf_{x_i\in(0,\overline x_i)}1/\sqrt{1+\overline y'_i(x_i)^2}\] depends solely on the geometry of $\Omega$.
 Insertion into \eqref{eq:weighted_h1_reg_2} yields
 \begin{equation}\label{eq:weighted_h1_reg_3}
  \int_\Omega u\nabla u\cdot\nabla\sigma \ge c_* \|u\|_{L^2(\tilde\Gamma)}^2,\quad \tilde\Gamma:=\bigcup_{i,j=1}^d \partial\Omega_i\cap\partial\Omega_j.
 \end{equation}
 Combining the estimates \eqref{eq:weighted_h1_reg}, \eqref{eq:weighted_h1_reg_1} and \eqref{eq:weighted_h1_reg_3}
 leads with Young's inequality to
 \begin{equation}\label{eq:weighted_h1_reg_4}
  \|\sigma^{1/2}\nabla u\|_{L^2(\Omega)}^2 +  c_* \|u\|_{L^2(\tilde\Gamma)}^2 \le c |\ln d_I|^{2}\|\sigma^{3/2} f\|_{L^2(\Omega)}^2 + \varepsilon |\ln d_I|^{-2}\|\sigma^{-1/2} u\|_{L^2(\Omega)}^2.
 \end{equation} 
 It remains to appropriately bound the latter term in \eqref{eq:weighted_h1_reg_4} to show a weighted $L^2(\Omega)$-estimate for $u$. 
 The decomposition into the subsets $\Omega_i$, integration by parts and Young's inequality yield
 \begin{align}\label{eq:weighted_h1_reg_5}
  \|\sigma^{-1/2} u\|_{L^2(\Omega)}^2 &= \sum_{i=1}^d \int_0^{\overline x_i} \int_0^{\overline y_i(x_i)} \frac1{d_I+y_i} u_{\Gamma_i}(x_i,y_i)^2\mathrm dy_i\mathrm dx_i\nonumber\\
  &= \sum_{i=1}^d \Bigg(\int_0^{\overline x_i} \ln\left(d_I+\overline y_i(x_i)\right) u_{\Gamma_i}(x_i,\overline y_i(x_i))^2\mathrm dx_i\nonumber\\
  &\quad- \int_0^{\overline x_i} \int_0^{\overline y_i(x_i)} \ln\left(d_I+y_i\right) 2u_{\Gamma_i}(x_i,y_i) \partial_{y_i} u_{\Gamma_i}(x_i,y_i)\mathrm dy_i\mathrm dx_i\Bigg)\nonumber\\
  &\le c_{**}\|u\|_{L^2(\tilde\Gamma)}^2 + \hat c |\ln d_I|^2 \|\sigma^{1/2} \nabla u\|_{L^2(\Omega)}^2 + \frac12 \|\sigma^{-1/2} u\|_{L^2(\Omega)}^2,
 \end{align}
 with
 \[
   \hat c > 0\quad\mbox {and}\quad c_{**}:=2\max_{i\in\mathcal C} \esssup_{x_i\in(0,\overline x_i)}
   \frac{\ln\left(1+\bar y_i(x_i)\right)}{\sqrt{1+\overline y_i'(x_i)^2}}.
\]
 The latter term in \eqref{eq:weighted_h1_reg_5} may be kicked back to the left-hand side.
 Inserting \eqref{eq:weighted_h1_reg_5} into \eqref{eq:weighted_h1_reg_4} and choosing 
 \[\varepsilon = \frac14 \min\left\lbrace \frac{c_*}{c_{**}},\frac1{\hat c}\right\rbrace\]
 yields
 \begin{align}\label{eq:weighted_h1_reg_6}
 & \|\sigma^{1/2}\nabla u\|_{L^2(\Omega)}^2 +  c_* \|u\|_{L^2(\tilde\Gamma)}^2 \nonumber\\
 &\qquad  \le c |\ln d_I|^2\|\sigma^{3/2} f\|_{L^2(\Omega)}^2 + \frac12 \left(\|\sigma^{1/2}\nabla u\|_{L^2(\Omega)}^2 +  c_* |\ln d_I|^{-2}\|u\|_{L^2(\tilde\Gamma)}^2 \right).
 \end{align}
 Due to $d_I<\e^{-1}$, a kick-back argument leads to the desired estimate for the term $\|\sigma^{1/2} \nabla u\|_{L^2(\Omega)}$.
 Using the estimates \eqref{eq:weighted_h1_reg_5} and \eqref{eq:weighted_h1_reg_6} we finally confirm that
 \[
  \|\sigma^{-1/2}u\|_{L^2(\Omega)}^2 \le c \left(\|u\|_{L^2(\tilde\Gamma)}^2 + |\ln d_I|^2\|\sigma^{1/2} \nabla u\|_{L^2(\Omega)}^2 \right)
  \le c |\ln d_I|^4\|\sigma^{3/2} f\|_{L^2(\Omega)}^2.
 \]
 \end{proof}
 For technical reasons we decompose our domain into dyadic subsets 
 \begin{equation}\label{eq:dyadic}
   \Omega_J:=\{x\in\Omega\colon \rho(x)\in (d_{J+1},d_J)\},\quad J=0,\ldots,I,
 \end{equation}
 where given $d_I\in(0,\e^{-1})$, the numbers $d_J$ fulfill $d_{I+1}=0$, $d_J=2d_{J+1}$ for $J=I-1,\ldots,1$, and $d_0 =\diam(\Omega)$. Without loss of generality we assume that $\left|\Omega_0\right|\neq 0$, otherwise a simple scaling argument can be used to achieve this. By means of the subsets $\Omega_J$, we will be able to handle the weight function $\sigma$ within the proofs. More precisely, we will especially use that
 \begin{equation}\label{eq:relation_sigma_dJ}
	\inf_{x\in\Omega_J}\sigma(x)\sim\sup_{x\in\Omega_J}\sigma(x) \sim d_J,\quad J=0,\ldots,I.
 \end{equation}
 In the next lemma we show a local regularity result on the subsets $\Omega_J$ and, as a consequence of this, global a priori estimates for second derivatives in weighted norms.
 Due to pollution effects we have to take into account the patches defined by
 \[
  \Omega_J':= \Omega_{J-1}\cup\Omega_J\cup\Omega_{J+1},\qquad \Omega_J'':=\Omega_{J-1}'\cup\Omega_J'\cup\Omega_{J+1}'
 \]
  with the obvious modifications for the cases $J=I, I-1$, and $J=0,1$.
 \begin{lemma}\label{lem:weighted_reg_h2}  
  Let $\Omega\subset\mathbb R^2$ be a polygonal domain.
  The solution $u$ of \eqref{eq:weak_form_1} satisfies
  \begin{align*}
   &(i)\quad & \|\nabla^2 u\|_{L^2(\Omega_J)} &\le c \left(d_J^{-1}\|\nabla
     u\|_{L^2(\Omega_J')} + \|f\|_{L^2(\Omega_J')}\right),\quad 
   J=0,\ldots,I,
 \intertext{provided that $u\in H^2(\Omega_J')$. Moreover, if $\Omega$ is convex, the estimates}
   &(ii)\quad & \|\sigma\nabla^2 u\|_{L^2(\Omega)} &\le c\|\sigma f\|_{L^2(\Omega)},\\
   &(iii)\quad & \|\sigma^{3/2}\nabla^2 u\|_{L^2(\Omega)} &\le c \left|\ln d_I\right|
   \|\sigma^{3/2}f\|_{L^2(\Omega)}
  \end{align*}
  are fulfilled.
 \end{lemma}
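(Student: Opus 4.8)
The plan is to establish the local estimate $(i)$ first by a scaling argument, and then to deduce the global weighted estimates $(ii)$ and $(iii)$ by multiplying $(i)$ with the appropriate power of the weight, exploiting $\sigma\sim d_J$ on each $\Omega_J$ from \eqref{eq:relation_sigma_dJ}, summing over the dyadic decomposition \eqref{eq:dyadic}, and finally absorbing the resulting gradient terms by means of the weighted $H^1$-bounds in Lemma~\ref{lem:weighted_reg_h1}.

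For $(i)$ I would localize and rescale. Cover $\Omega_J'$ by patches of diameter $\sim d_J$ subordinate to a partition of unity and map each to a reference patch of unit diameter via the dilation $\hat x = x/d_J$. Writing $\hat u(\hat x)=u(d_J\hat x)$, the rescaled function solves $-\Delta\hat u=\hat f$ with $\hat f=d_J^2 f(d_J\,\cdot)$, and the transformation rules in two dimensions give $\|\nabla^2 u\|_{L^2(\Omega_J)}=d_J^{-1}\|\nabla^2\hat u\|_{L^2(\hat\Omega_J)}$, $\|\nabla u\|_{L^2(\Omega_J')}=\|\nabla\hat u\|_{L^2(\hat\Omega_J')}$ and $\|f\|_{L^2(\Omega_J')}=d_J^{-1}\|\hat f\|_{L^2(\hat\Omega_J')}$. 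On the reference scale one has the a priori bound
\[
 \|\nabla^2\hat u\|_{L^2(\hat\Omega_J)}\le c\left(\|\nabla\hat u\|_{L^2(\hat\Omega_J')}+\|\hat f\|_{L^2(\hat\Omega_J')}\right),
\]
and inserting the transformation rules and dividing by $d_J$ yields exactly $(i)$.

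I expect the main obstacle to be that this reference estimate must hold with a constant independent of $J$. The decisive point is that, because the decomposition \eqref{eq:dyadic} is dyadic, the rescaled patches are of uniformly bounded shape and fall into only a few geometric types: interior patches (for all but the innermost layers near $\Gamma$, since for $J<I$ the whole of $\Omega_J$ lies at distance at least $d_{J+1}\sim d_J$ from $\Gamma$), half-disk patches along straight edges, and finitely many corner sectors near the corners. On each type the usual second-order a priori estimate applies: interior regularity in the interior, boundary regularity up to a straight edge using $u|_\Gamma\equiv 0$, and, for convex corners, regularity up to the corner -- this is exactly where the hypothesis $u\in H^2(\Omega_J')$ enters. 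To replace the lower-order contribution that the localization produces by the gradient term only, I would subtract a suitable constant on patches away from $\Gamma$ (Poincar\'e--Wirtinger) and use the homogeneous data on patches touching $\Gamma$ (Poincar\'e--Friedrichs); since constants do not affect $\nabla^2\hat u$, this converts $\|\hat u\|$ into $\|\nabla\hat u\|$ and leaves no $u$-term. Summing over the patches with bounded overlap then yields the reference bound with a uniform constant.

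For $(ii)$, using $\sigma\sim d_J$ on $\Omega_J'$ from \eqref{eq:relation_sigma_dJ}, the estimate $(i)$ gives
\[
 \|\sigma\nabla^2 u\|_{L^2(\Omega_J)}\le c\,d_J\|\nabla^2 u\|_{L^2(\Omega_J)}\le c\left(\|\nabla u\|_{L^2(\Omega_J')}+\|\sigma f\|_{L^2(\Omega_J')}\right).
\]
Squaring, summing over $J=0,\ldots,I$ and using the finite overlap of the patches $\Omega_J'$ yields $\|\sigma\nabla^2 u\|_{L^2(\Omega)}\le c(\|\nabla u\|_{L^2(\Omega)}+\|\sigma f\|_{L^2(\Omega)})$, and the gradient term is absorbed by Lemma~\ref{lem:weighted_reg_h1}$(i)$, which gives $\|\nabla u\|_{L^2(\Omega)}\le c\|\sigma f\|_{L^2(\Omega)}$. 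For $(iii)$ the same chain with the weight $\sigma^{3/2}$ produces
\[
 \|\sigma^{3/2}\nabla^2 u\|_{L^2(\Omega)}\le c\left(\|\sigma^{1/2}\nabla u\|_{L^2(\Omega)}+\|\sigma^{3/2}f\|_{L^2(\Omega)}\right),
\]
and now Lemma~\ref{lem:weighted_reg_h1}$(ii)$ bounds $\|\sigma^{1/2}\nabla u\|_{L^2(\Omega)}\le c|\ln d_I|\,\|\sigma^{3/2}f\|_{L^2(\Omega)}$, which contributes the single logarithmic factor; since $d_I<\e^{-1}$ gives $|\ln d_I|\ge 1$, the remaining $f$-term is subdominant and $(iii)$ follows.
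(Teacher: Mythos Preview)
Your proposal is correct and follows the paper's overall strategy closely: establish the local second-order estimate $(i)$, multiply by $d_J$ resp.\ $d_J^{3/2}$, exploit \eqref{eq:relation_sigma_dJ}, sum over the dyadic layers, and absorb the gradient terms via Lemma~\ref{lem:weighted_reg_h1}. Parts $(ii)$ and $(iii)$ are handled identically to the paper.

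The only methodological difference is in the proof of $(i)$. The paper does \emph{not} rescale; it covers $\Omega_J$ by balls $B_{d_J/8}(x_i)$, introduces a cut-off $\eta$ with $\supp\eta\subset B_{d_J/4}(x_i)\subset\Omega_J'$ and $\|D^\alpha\eta\|_{L^\infty}\le c\,d_J^{-|\alpha|}$, sets $\bar u$ to be the mean of $u$ over $B_{d_J/4}(x_i)$ (or zero if the ball meets $\Gamma$), and applies the \emph{global} a~priori estimate $\|\nabla^2 v\|_{L^2(\Omega)}\le c\|\Delta v\|_{L^2(\Omega)}$ from \cite{Gri85} to $v=\eta(u-\bar u)\in H^1_0(\Omega)$. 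The $d_J^{-1}$ and $d_J^{-2}$ factors then arise from the derivatives hitting $\eta$, and Poincar\'e removes the $u-\bar u$ term. Your scaling argument is an equally standard alternative: it trades the single appeal to global regularity on $\Omega$ for a case distinction (interior / straight edge / corner) on uniformly shaped reference patches, which is a bit more bookkeeping but makes the role of the boundary geometry and of the hypothesis $u\in H^2(\Omega_J')$ more explicit. Both routes yield the same constant-independent-of-$J$ bound.
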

 \begin{proof} 
  We consider a covering of $\Omega_J$ consisting of finitely many balls $B_{d_J/8}(x_i)$, $i=1,\ldots,N$, with radius $d_J/8$ and centers $x_i\in\Omega_J$. 
  This implies $B_{d_J/4}(x_i)\subset \Omega_J'$. In a first step, we appropriately bound $\left|\nabla^2 u\right|$ on $B_{d_J/8}(x_i)$.
  For this purpose, we introduce a smooth cut-off function $\eta\in C_0^\infty(\Omega)$ 
  satisfying $\eta\equiv 1$ in $B_{d_J/8}(x_i)$ and $\supp \eta \subset B_{d_J/4}(x_i)$.  
  In case of $J=I$ the balls may overlap the boundary. Then, $B_{d_J/8}(x_i)$ and $B_{d_J/4}(x_i)$ are the intersections of each ball with $\Omega$.
  It is possible to construct $\eta$ in such a way that $\|D^\alpha \eta\|_{L^\infty(\Omega)}\le cd_J^{-|\alpha|}$. Next, let $\bar u \in \mathbb{R}$ be defined by
  \[
  	\bar u :=\begin{cases}
  				|B_{d_J/4}(x_i)|^{-1}\int_{B_{d_J/4}(x_i)} u&\text{if }\supp \eta \subset \Omega,\\
  				0 &   \text{otherwise.}
  			\end{cases}
  \]
  The function $\eta (u-\bar u)\in H^1_0(\Omega)$ is the weak solution of the boundary value problem
  \[
   -\Delta(\eta (u-\bar u)) = -\Delta \eta (u-\bar u) - 2\nabla u\cdot\nabla \eta + \eta f \quad\mbox{in }\Omega,\qquad \eta (u-\bar u) = 0 \quad\mbox{on }\Gamma.
  \]
  Note that the right-hand side belongs to $L^2(\Omega)$.
  With standard regularity results \cite{Gri85} we conclude
 \begin{align*}
  \|\nabla^2 u\|_{L^2(B_{d_J/8}(x_i))}  
  &\le c\|\nabla^2(\eta (u-\bar u))\|_{L^2(\Omega)}
  \le c \|\Delta(\eta(u-\bar u))\|_{L^2(\Omega)} \\
  &\le c\left(d_J^{-2}\|u-\bar u\|_{L^2(B_{d_J/4}(x_i))} + d_J^{-1}\|\nabla u\|_{L^2(B_{d_J/4}(x_i))} + \|f\|_{L^2(B_{d_J/4}(x_i))}\right).
 \end{align*} 
 An application of the Poincar\'e inequality allows to bound the first term on the right-hand side by the second one. 
 Note that a careful choice of the midpoints $\{x_i\} \subset \Omega_J$ according to \cite[Lemma A.1]{MW12} guarantees 
 that in each point $x\in \Omega_J'$ only a finite number $n$ of balls $B_{d/4}(x_i)$ overlap. The number $n$ depends only on the spatial dimension.
 Hence, we get the desired estimate (i), 
 \begin{align*}
  \|\nabla^2 u\|_{L^2(\Omega_J)}^2 &\le \sum_{i=1}^N\|\nabla^2 u\|_{L^2(B_{d_J/8}(x_i))}^2 
  \le c\sum_{i=1}^N \left(d_J^{-2}\|\nabla u\|_{L^2(B_{d_J/4}(x_i))}^2 + \|f\|_{L^2(B_{d_J/4}(x_i))}^2\right) \\
  &\le c \left(d_J^{-2}\|\nabla u\|_{L^2(\Omega_J')}^2  + \|f\|_{L^2(\Omega_J')}^2\right).
 \end{align*}
 The estimate (ii) follows from (i)
 taking into account the relation \eqref{eq:relation_sigma_dJ}. From this we deduce
 \begin{align*}
  \|\sigma\nabla^2 u\|_{L^2(\Omega)}^2
  &\le c\sum_{J=0}^I d_J^2 \|\nabla^2 u\|_{L^2(\Omega_J)}^2 
  \le c\sum_{J=0}^I \left(\|\nabla u\|_{L^2(\Omega_J')}^2 + d_J^2\|f\|_{L^2(\Omega_J')}^2\right) \\
  &\le c \left(\|\nabla u\|_{L^2(\Omega)}^2 + \|\sigma f\|_{L^2(\Omega)}^2\right).
 \end{align*}
 With Lemma \ref{lem:weighted_reg_h1} we conclude the assertion.
 In the same way the estimate (iii) follows.
 \end{proof}
 
 As we consider computational domains having a polygonal boundary, we have to deal with singularities occurring in the vicinity of 
 vertices of the domain as well. For an accurate description of these singularities, we exploit regularity results in weighted Sobolev spaces with weights related to the corners. 
 We denote the distance functions to the corners $\boldsymbol c_j$ by $r_j(x):=|x-\boldsymbol c_j|$,
 $j\in\mathcal C$.
 Moreover, we introduce the regions $\Omega^j_R:=\{x\in\Omega\colon r_j(x) < R\}$, and choose $R>0$ appropriately such that these domains do not intersect. 
 Furthermore, we introduce the region $\hat \Omega_R:=\Omega\setminus\cup_{j\in\C} \Omega_{R}^j$.
 On each $\Omega_R^j$ we define for $k\in\mathbb N_0$, $p\in [1,\infty]$ and $\beta_j\in\mathbb R$ the local norm
 \begin{align*}
  \|v\|_{V^{k,p}_{\beta_j}(\Omega_R^j)}^p := \sum_{|\alpha|\le k} \|r_j^{\beta_j+|\alpha|-k} D^\alpha v\|_{L^p(\Omega_R^j)}^p, &\quad \mbox{for}\ p\in[1,\infty),\\
  \|v\|_{V^{k,\infty}_{\beta_j}(\Omega_R^j)} := \max_{|\alpha|\le k} \|r_j^{\beta_j+|\alpha|-k} D^\alpha v\|_{L^\infty(\Omega_R^j)}. &  
 \end{align*}
 The weighted Sobolev space $V_{\vec\beta}^{k,p}(\Omega)$ with weight vector $\vec\beta\in\mathbb R^d$ 
 is defined as the set of measurable functions  with finite norm
 \begin{equation}\label{eq:weighted_norm}
  \|v\|_{V^{k,p}_{\vec\beta}(\Omega)} :=  \|v\|_{W^{k,p}(\hat \Omega_{R/2})} + \sum_{j=1}^d \|v\|_{V^{k,p}_{\beta_j}(\Omega_R^j)}.
 \end{equation}
 We will frequently use these norms on subdomains $\mathcal G\subset \Omega$. In this case, the weight functions $r_j$ are still related to the corners 
 of $\Omega$ and not of $\mathcal G$.
 
 Under certain assumptions on the input data, one can show that the solution of \eqref{eq:weak_form_1} belongs to these weighted Sobolev space 
 provided that the weights are sufficiently large. The lower bounds for the
 weights depend on the singular exponents 
 \[
 	\lambda_j:=\pi/\omega_j,\ j\in\mathcal C.
 \]
 The following result is taken from \cite[\S1.3, Theorem 3.1]{NP94} for $p=2$, and \cite[Theorem 2.6.1]{KMR01} for $p\in(1,\infty)$. 
 \begin{lemma}\label{lem:V2p_reg}
   Let $f\in V^{0,p}_{\vec\beta}(\Omega)$ with $p\in(1,\infty)$, and $\vec\beta\in \mathbb R^d$ satisfying $\beta_j\in (2-2/p-\lambda_j, 2-2/p)$ for all $j\in\mathcal C$.
   Then, the solution $u$ of \eqref{eq:weak_form_1} belongs to $V^{2,p}_{\vec\beta}(\Omega)$
   and satisfies
   \[
     \|u\|_{V^{2,p}_{\vec\beta}(\Omega)} \le c \|f\|_{V^{0,p}_{\vec\beta}(\Omega)}.
   \]
 \end{lemma}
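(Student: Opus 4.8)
The plan is to localize the problem with a smooth partition of unity subordinate to the covering of $\overline\Omega$ by the interior region $\hat\Omega_{R/2}$ and the corner neighborhoods $\Omega_R^j$, and to treat each piece with the regularity theory appropriate to it. Away from the corners one has $r_j\sim 1$, so the weighted $V^{k,p}_{\vec\beta}$-norms are equivalent to ordinary $W^{k,p}$-norms, and the claim there is precisely the classical Agmon--Douglis--Nirenberg / Calder\'on--Zygmund interior and boundary $W^{2,p}$-estimate for the Dirichlet Laplacian on a straight boundary piece, giving $\|u\|_{W^{2,p}(\hat\Omega_{R/2})}\le c(\|f\|_{L^p}+\|u\|_{W^{1,p}})$. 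The whole difficulty is therefore concentrated at the vertices.

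Near a fixed corner $\c_j$ I would pass to the model problem, the Dirichlet Laplacian on the infinite plane sector $K_j=\{(r,\varphi)\colon r>0,\ \varphi\in(0,\omega_j)\}$. Writing $-\Delta$ in polar coordinates and substituting $t=\ln r$ turns $K_j$ into the cylinder $\mathbb R\times(0,\omega_j)$ and converts the operator into one with $t$-independent coefficients. Applying the Mellin transform in $r$ (equivalently the Fourier transform in $t$) reduces the equation to the parameter-dependent two-point boundary value problem
\begin{equation*}
  -\phi''(\varphi)-\lambda^2\phi(\varphi)=\hat f(\lambda,\varphi),\qquad \phi(0)=\phi(\omega_j)=0,
\end{equation*}
governed by the operator pencil $\mathcal A_j(\lambda)$. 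The solution is then recovered by the inverse Mellin transform along a vertical line whose position is fixed by $\beta_j$: a homogeneity count shows that a local singular function $r^s$ lies in $V^{2,p}_{\beta_j}$ near $r=0$ precisely when $s>\gamma_j:=2-2/p-\beta_j$, so the relevant line is $\operatorname{Re}\lambda=\gamma_j$, and the hypothesis $\beta_j\in(2-2/p-\lambda_j,\,2-2/p)$ is exactly the statement $\gamma_j\in(0,\lambda_j)$.

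The key step, and the main obstacle, is the invertibility of $\mathcal A_j(\lambda)$ together with a resolvent estimate uniform along this line. The pencil fails to be invertible exactly at its eigenvalues, which for the Dirichlet sector are the nonzero zeros of $\sin(\lambda\omega_j)$, i.e.\ $\lambda\in\{\pm k\lambda_j\colon k\in\mathbb N\}$ with $\lambda_j=\pi/\omega_j$; note that $\lambda=0$ is \emph{not} an eigenvalue. The condition $\gamma_j\in(0,\lambda_j)$ places the line $\operatorname{Re}\lambda=\gamma_j$ strictly between the value $0$ and the first genuine eigenvalue $\lambda_j$, hence free of spectrum, and yields a bound $\|\mathcal A_j(\lambda)^{-1}\|\le c\,(1+|\lambda|)^{-2}$ uniform on the line. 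For $p=2$ one closes by Parseval on the Mellin side, which is the route of \cite{NP94}; for general $p\in(1,\infty)$ Parseval is unavailable, and one must instead invoke $L^p$ operator-valued Fourier-multiplier (Mikhlin-type) estimates on the cylinder, which is the technically demanding ingredient and the route of \cite{KMR01}. In either case the resolvent bound transfers into the homogeneous, scale-invariant model estimate $\|u\|_{V^{2,p}_{\beta_j}(K_j)}\le c\,\|f\|_{V^{0,p}_{\beta_j}(K_j)}$, which carries no lower-order term.

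Finally I would transfer the model estimate back to $\Omega_R^j$ by a cut-off argument: multiplying $u$ by a function supported near $\c_j$ produces commutator remainders supported on the annulus $r_j\in(R/2,R)$, where $r_j\sim1$ and the weighted norms reduce to $W^{k,p}$-norms, so these are controlled by the interior estimate. Summing the corner contributions and the interior contribution over the partition of unity gives $\|u\|_{V^{2,p}_{\vec\beta}(\Omega)}\le c\,(\|f\|_{V^{0,p}_{\vec\beta}(\Omega)}+\|u\|_{W^{1,p}(\Omega)})$, and the remaining corner-distant lower-order term is absorbed by the global $W^{1,p}$ a priori bound for the weak solution (for $p=2$ this is Lemma~\ref{lem:weighted_reg_h1}(i), together with \eqref{eq:poincare}), yielding the asserted estimate.
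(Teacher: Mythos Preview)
The paper does not actually prove this lemma: it is stated as a citation, ``taken from \cite[\S1.3, Theorem 3.1]{NP94} for $p=2$, and \cite[Theorem 2.6.1]{KMR01} for $p\in(1,\infty)$'', with no argument given. Your proposal is therefore not to be compared against a proof in the paper but against the cited sources, and what you have written is a faithful outline of the Kondrat'ev--Maz'ya--Plamenevsky machinery developed there: localization by a partition of unity, reduction near each corner to the model Dirichlet problem on the infinite sector, Mellin transform leading to the operator pencil $\mathcal A_j(\lambda)$ with spectrum $\{\pm k\lambda_j\}$, the identification of the weight condition $\beta_j\in(2-2/p-\lambda_j,2-2/p)$ with the requirement that the inversion line $\operatorname{Re}\lambda=2-2/p-\beta_j$ lie in the spectrum-free strip $(0,\lambda_j)$, Parseval for $p=2$ versus operator-valued multiplier theorems for general $p$, and transfer back to $\Omega$ by cut-off.

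One point deserves care. In your final step you write that the lower-order remainder $\|u\|_{W^{1,p}(\Omega)}$ is absorbed by ``the global $W^{1,p}$ a~priori bound for the weak solution'', and for $p=2$ you cite Lemma~\ref{lem:weighted_reg_h1}(i). That lemma concerns the boundary-distance weight $\sigma$, not the corner weights $r_j$, and in any case for $p\ne 2$ no such direct a~priori bound is available at this stage. In the references the absorption is carried out differently: one shows that the operator $-\Delta\colon V^{2,p}_{\vec\beta}(\Omega)\cap\{u|_\Gamma=0\}\to V^{0,p}_{\vec\beta}(\Omega)$ is Fredholm (the model isomorphism on each sector plus compactness of the commutator terms) and then invokes uniqueness of the weak solution to conclude injectivity, hence bijectivity, which yields the estimate without lower-order terms. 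Replacing your last sentence by this Fredholm/uniqueness step would make the sketch match the cited proofs.
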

 \begin{remark}\label{remark:normalderivative}
 	According to \cite[Theorem 7.1.1]{KMR97} (see also \cite[Lemma 2.32]{Pfe}) there holds
 	\[
 		\sum_{j=1}^d\left(\sum_{\left|\alpha\right|\leq 1}\left|(D^\alpha v)(\boldsymbol c_j)\right|\right)=0
 	\]
 	if $v\in V^{2,2}_{\vec\beta}(\Omega)$ with $\beta_j<0$ for $j\in\mathcal{C}$. Thus, if $f\in V^{0,2}_{\vec\beta}(\Omega)$ with $\beta_j<0$ for $j\in\mathcal{C}$, then the normal derivative $\partial_n u$ is equal to zero at each convex corner. At non-convex corners it has a pole in general, see also the discussions in \cite{AMPR15}. 
 \end{remark}
 In order to derive optimal error estimates, we need a similar result for the case $p=\infty$, which is excluded in the previous lemma.
 However, taking regularity results in weighted H\"older spaces into account (see \cite{KMR01}) the assertion of Lemma \ref{lem:V2p_reg} 
 remains true when assuming slightly more regularity for the right-hand side. 
 For the proof of the following result we refer to \cite[Lemma 4.2]{Sir10}.
 \begin{lemma}\label{lem:V2inf_reg}
  Assume that $f\in C^{0,\sigma}(\overline\Omega)$ with some $\sigma\in (0,1)$.
  Let the weight vector $\vec\beta\in[0,2)^d$ be chosen such that $\beta_j>2-\lambda_j$ for all $j\in\C$.
  Then, the solution of \eqref{eq:weak_form_1} belongs to $V^{2,\infty}_{\vec\beta}(\Omega)$ and satisfies the \emph{a priori} estimate
  \[
   \|u\|_{V^{2,\infty}_{\vec\beta}(\Omega)} \le c \|f\|_{C^{0,\sigma}(\overline\Omega)}.
  \] 
 \end{lemma}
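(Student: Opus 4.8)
The plan is to localize the estimate according to the definition \eqref{eq:weighted_norm} of the $V^{2,\infty}_{\vec\beta}(\Omega)$-norm: it suffices to bound $\|u\|_{W^{2,\infty}(\hat\Omega_{R/2})}$ in the interior part, where the boundary is smooth, and the local corner contributions $\|u\|_{V^{2,\infty}_{\beta_j}(\Omega_R^j)}$ separately. On $\hat\Omega_{R/2}$ the set $\partial\Omega$ consists of finitely many straight segments at distance $\gtrsim R$ from all vertices, so classical interior and boundary Schauder estimates applied on a finite covering by balls and half-balls adapted to these segments (see, e.g., \cite{Gri85}) give $\|u\|_{C^{2,\sigma}(\hat\Omega_{R/2})}\le c\big(\|f\|_{C^{0,\sigma}(\overline\Omega)}+\|u\|_{L^\infty(\Omega)}\big)$, and $\|u\|_{L^\infty(\Omega)}\le c\|f\|_{L^\infty(\Omega)}\le c\|f\|_{C^{0,\sigma}(\overline\Omega)}$ by the maximum principle. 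Hence only the behaviour near each corner $\boldsymbol c_j$ remains to be analysed.

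Near a fixed corner I would decompose $\Omega_R^j$ into dyadic annuli $A_k:=\{x\in\Omega_R^j\colon 2^{-k-1}R<r_j(x)<2^{-k}R\}$, set $d_k:=2^{-k}R$, and rescale each annulus to a fixed reference configuration via $x=\boldsymbol c_j+d_k\hat x$. Writing $\hat u(\hat x)=u(\boldsymbol c_j+d_k\hat x)$, the equation becomes $-\Delta_{\hat x}\hat u=\hat f$ with $\hat f(\hat x)=d_k^2 f(\boldsymbol c_j+d_k\hat x)$, and the reference domain is a truncated sector whose only parts of $\partial\Omega$ are the two straight radial edges, on which $\hat u$ vanishes, while the circular interfaces lie in the interior of $\Omega$. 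A Schauder estimate on the reference annulus, obtained from a finite covering by interior balls and boundary half-balls contained in a slightly enlarged annulus $\hat A'$ (the scaled version of $A_{k-1}\cup A_k\cup A_{k+1}$, denoted $A_k'$), yields $\|\hat u\|_{C^{2,\sigma}(\hat A)}\le c\big(\|\hat f\|_{C^{0,\sigma}(\hat A')}+\|\hat u\|_{L^\infty(\hat A')}\big)$. Undoing the scaling, using $r_j\sim d_k$ on $A_k$ and $\beta_j\ge 0$ (so that $d_k^{\beta_j}\le R^{\beta_j}$ and $\|\hat f\|_{C^{0,\sigma}}\le c\,d_k^2\|f\|_{C^{0,\sigma}(\overline\Omega)}$), the chain rule turns the top-order part into
\[
 d_k^{\beta_j}\,\|\nabla^2 u\|_{L^\infty(A_k)}\le c\big(\|f\|_{C^{0,\sigma}(\overline\Omega)}+d_k^{\beta_j-2}\|u\|_{L^\infty(A_k')}\big),
\]
and the weighted lower-order terms $r_j^{\beta_j-2}u$ and $r_j^{\beta_j-1}\nabla u$ satisfy estimates of exactly the same form. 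Thus the entire corner contribution is reduced to the single pointwise decay bound $\|u\|_{L^\infty(A_k')}\le c\,d_k^{2-\beta_j}\|f\|_{C^{0,\sigma}(\overline\Omega)}$, which then makes the right-hand side above independent of $k$ and summable over the annuli.

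Establishing this decay is the main obstacle, since it is exactly the point at which the corner singularity and the admissible weight range enter. The hypotheses $\beta_j\in[0,2)$ and $\beta_j>2-\lambda_j$ place the exponent $2-\beta_j$ in the open interval $(0,\lambda_j)$, strictly below the first singular exponent $\lambda_j=\pi/\omega_j$ of the Dirichlet Laplacian in the sector, so that no element of the associated operator pencil lies on the relevant weight line. One route is to invoke Lemma~\ref{lem:V2p_reg} for a large $p$ with a weight $\gamma_j\in(2-2/p-\lambda_j,\,2-2/p)$ and to convert the resulting $V^{2,p}_{\vec\gamma}$-regularity into the pointwise bound by a scaled Sobolev embedding $W^{2,p}\hookrightarrow C^0$ on each annulus; this, however, loses a factor $d_k^{2/p}$ and therefore just fails to reach the borderline weight $\beta_j=0$. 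The clean argument I would adopt instead uses the Kondrat'ev--Maz'ya--Plamenevskii regularity theory in weighted H\"older spaces \cite{KMR01}: under the spectral-gap condition above, the Dirichlet Laplacian is an isomorphism from the weighted H\"older space of order two onto that of order zero, the unweighted datum $f$ embeds continuously into the weighted H\"older space of order zero precisely because $\beta_j\ge 0$, and the resulting weighted H\"older bound for $u$ dominates both the decay $\|u\|_{L^\infty(A_k')}\le c\,d_k^{2-\beta_j}\|f\|_{C^{0,\sigma}(\overline\Omega)}$ and the full local norm $\|u\|_{V^{2,\infty}_{\beta_j}(\Omega_R^j)}$. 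Summation over $j\in\mathcal C$ together with the interior estimate then yields the claim; this is the path taken in \cite[Lemma 4.2]{Sir10}.
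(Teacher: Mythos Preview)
The paper does not supply its own proof of this lemma; it simply refers to \cite[Lemma~4.2]{Sir10}. Your proposal sketches precisely the argument behind that reference --- localization into interior and corner regions, classical Schauder estimates away from corners, a dyadic-annulus rescaling near each $\boldsymbol c_j$, and the crucial pointwise decay $\|u\|_{L^\infty(A_k')}\le c\,d_k^{2-\beta_j}$ obtained from the Kondrat'ev--Maz'ya--Plamenevskii weighted H\"older theory under the spectral condition $2-\beta_j\in(0,\lambda_j)$ --- and you even close by citing the same source. The sketch is correct and matches the approach the paper defers to; there is nothing to add.
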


\section{Weighted $L^2(\Omega)$ error estimates}\label{sec:weighted_estimate}
We approximate the solution of \eqref{eq:weak_form_1} with linear finite elements. Therefore we introduce a 
family of conforming triangulations $\{\mathcal T_h\}_{h>0}$ consisting of triangular elements,
where $h:=\max_{T\in\mathcal T_h}\diam(T)$ denotes the mesh parameter.
As specialty, we consider triangulations which are isotropically refined towards the whole boundary:
Let $\rho_T:=\dist(T,\Gamma)$ the distance of the element $T\in\mathcal
T_h$ to the boundary $\Gamma$.
We assume that
\begin{equation}\label{eq:ref_cond}
h_T:=\diam(T)\sim \begin{cases}
h^2, &\mbox{if}\ \rho_T=0,\\
h \sqrt{\rho_T}, &\mbox{if}\ \rho_T>0,
\end{cases}\qquad \forall T\in\mathcal T_h.
\end{equation}
This refinement condition ensures that elements touching the boundary have diameter $h^2$. Moreover, elements with $O(1)$-distance to the boundary have diameter $h$, and adjacent elements have approximately equal diameter.
\begin{remark}
	While the number of elements for quasi-uniform triangulations of planar domains behaves
	like $h^{-2}$, there is a slight increase in the number of elements when the refinement condition \eqref{eq:ref_cond}
	holds. Let $S_h:=\cup\{T\in\mathcal T_h\colon \rho_T=0\}$. 
	The number of elements belonging to the set $S_h$, can be estimated by
	\[
	N_{\text{elem}}^{\text{bd}} \sim \frac{|\Gamma|}{\min_{T\subset S_h} |T|} \sim h^{-2}.
	\]
	However, for the number of elements $N_{elem}^{\text{int}}$ away from the boundary there holds
	\begin{align*}
	N_{\text{elem}}^{\text{int}} = \sum_{T\not{\subset}S_h} 1 = \sum_{T\not\subset S_h} |T|^{-1} \int_{T}\mathrm dx 
	\sim h^{-2} \int_{\Omega\setminus S_h} \rho(x)^{-1}\mathrm dx \sim h^{-2}|\ln h|,
	\end{align*}
	where we exploited $h_T\sim h \sqrt{\rho_T}$ and the property $\rho_T \sim \rho(x)$ for all $x\in T$ in case of $\rho_T>0$.
\end{remark}

Now, we define the finite-dimensional space $V_{0h}:=V_h\cap H^1_0(\Omega)$ with
\[
V_h:=\{v_h\in C(\overline\Omega)\colon v_h|_T\in\mathcal P_1(T)\ \mbox{for all}\ T\in\mathcal T_h\},
\]
where $\mathcal P_1(T)$ denotes the set of polynomials on the element $T$ of degree at most $1$, and determine approximations to $u$ by solving the problem:
\begin{equation}\label{eq:fe_formulation}
\mbox{Find}\ u_h\in V_{0h}\colon\quad  (\nabla u_h,\nabla v_h)_{L^2(\Omega)} = (f,v_h)_{L^2(\Omega)} \quad \mbox{for all}\ v_h\in V_{0h}.
\end{equation}
The aim of this section is to derive an error estimate in a weighted $L^2(\Omega)$-norm. Such a term occurs in the applications we have in mind, and will become clear in Section \ref{sec:surfaceflux}. More precisely, the term $\|\sigma^{-3/2}(u-u_h)\|_{L^2(\Omega)}$ with $\sigma=\rho + d_I$ from \eqref{eq:weight} 
is considered, where the number $d_I$ satisfies $d_I=2^{-I}$. The exponent $I$ is chosen such that 
$d_I=c_I h^2$ with some fixed and mesh-independent constant $c_I>1$,
which we specify later. This construction implies $I\sim \lnh$.

As the mesh size solely depends on the distance to the boundary which is bounded within $\Omega_J$ by  $d_J$ and $d_{J+1}=d_J/2$,
the meshes are locally quasi-uniform within each $\Omega_J$, $J=0,\ldots,I$.
That means, there are constants $c_1,c_2>0$ such 
that each $T\in\mathcal T_h$ with $T\cap \Omega_{J}\ne\emptyset$ satisfies
\begin{equation}\label{eq:mesh_size_dJ}
\begin{aligned}
c_1 h \sqrt{d_J} &\le h_T \le c_2 h \sqrt{d_J} && \mbox{if}\quad J=0,\ldots,I-1,\\
c_1 c_I^{-1}h\sqrt{d_I}&\le h_T \le c_2 h \sqrt{d_I}&&\mbox{if}\quad J=I.
\end{aligned}
\end{equation}

We are now in the position to derive the main result of this section under the assumption that the computational domain is convex.
The non-convex case will be discussed later as different assumptions and techniques will be used.
\begin{theorem}\label{thm:weighted_estimate}
 Let $\Omega\subset\mathbb R^2$ be a convex polygonal domain, this is, $\overline\lambda:=\min_{j\in\C}\lambda_j > 1$.
 Assume that $f\in C^{0,\sigma}(\overline\Omega)$ with some $\sigma\in (0,1)$.
 For $c_I>1$ sufficiently large, there exists some $h_0=h_0(c_I)>0$  such that the estimate
 \begin{equation}\label{eq:weighted_estimate}
   \|\sigma^{-3/2}(u-u_h)\|_{L^2(\Omega)} \le c h^{\min\{2,-1+2\bar\lambda-2\varepsilon\}}\lnh^{3/2}
   \|f\|_{C^{0,\sigma}(\overline \Omega)}
 \end{equation}
 holds for all $h\le h_0$ 
 and $\varepsilon>0$.  
\end{theorem}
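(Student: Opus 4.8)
The plan is to run a duality argument tailored to the weight $\sigma$, combined with the dyadic decomposition $\Omega=\bigcup_{J=0}^I\Omega_J$ from \eqref{eq:dyadic}. Writing $e:=u-u_h$, I would introduce the auxiliary problem
\[
 -\Delta z = \sigma^{-3}e\quad\mbox{in }\Omega,\qquad z=0\quad\mbox{on }\Gamma ,
\]
which is well posed since $\sigma\ge d_I>0$ guarantees $\sigma^{-3}e\in L^2(\Omega)$. Testing with $e$, using the weak formulations \eqref{eq:weak_form_1} and \eqref{eq:fe_formulation}, and inserting the Lagrange interpolant $I_h z\in V_{0h}$ yields, via Galerkin orthogonality,
\[
 \|\sigma^{-3/2}e\|_{L^2(\Omega)}^2=(\sigma^{-3}e,e)_{L^2(\Omega)}=(\nabla z,\nabla e)_{L^2(\Omega)}=(\nabla(z-I_hz),\nabla e)_{L^2(\Omega)} .
\]
Convexity of $\Omega$ together with Lemma~\ref{lem:weighted_reg_h2} ensures $z\in H^2(\Omega_J')$ for every $J$, so the interpolation step is legitimate.

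Next I would localize over the dyadic strips. Cauchy--Schwarz gives
\[
 \|\sigma^{-3/2}e\|_{L^2(\Omega)}^2\le\sum_{J=0}^I\|\nabla(z-I_hz)\|_{L^2(\Omega_J)}\,\|\nabla e\|_{L^2(\Omega_J)} .
\]
On each $\Omega_J$ the mesh is locally quasi-uniform with $h_T\sim h\sqrt{d_J}$ by \eqref{eq:mesh_size_dJ}, so a standard interpolation estimate yields $\|\nabla(z-I_hz)\|_{L^2(\Omega_J)}\le c\,h\sqrt{d_J}\,\|\nabla^2 z\|_{L^2(\Omega_J')}$. For the finite element error I would invoke a local energy estimate of Schatz--Wahlbin type on the patches $\Omega_J''$, of the form $\|\nabla e\|_{L^2(\Omega_J)}\le c\bigl(h\sqrt{d_J}\,\|\nabla^2 u\|_{L^2(\Omega_J'')}+d_J^{-1}\|e\|_{L^2(\Omega_J'')}\bigr)$, where the first summand is the approximation contribution and the second a pollution term. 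The leading contribution is then $\sum_J h^2 d_J\,\|\nabla^2 z\|_{L^2(\Omega_J')}\,\|\nabla^2 u\|_{L^2(\Omega_J'')}$.

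The decisive step is the weight balancing. Splitting the factor $d_J$ as $d_J^{3/2}\cdot d_J^{-1/2}$, using $d_J\sim\sigma$ on $\Omega_J$ from \eqref{eq:relation_sigma_dJ}, and applying Cauchy--Schwarz in $J$ (with the finite overlap of the patches) produces the bound $c\,h^2\,\|\sigma^{3/2}\nabla^2 z\|_{L^2(\Omega)}\,\|\sigma^{-1/2}\nabla^2 u\|_{L^2(\Omega)}$. Here Lemma~\ref{lem:weighted_reg_h2}(iii) applied to the dual solution gives the crucial identity $\|\sigma^{3/2}\nabla^2 z\|_{L^2(\Omega)}\le c\lvert\ln d_I\rvert\,\|\sigma^{3/2}\sigma^{-3}e\|_{L^2(\Omega)}=c\lvert\ln d_I\rvert\,\|\sigma^{-3/2}e\|_{L^2(\Omega)}$, which reproduces exactly the quantity we wish to bound, so one power can be kicked back. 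What remains is to control $\|\sigma^{-1/2}\nabla^2 u\|_{L^2(\Omega)}$ by means of the weighted Hölder regularity of Lemma~\ref{lem:V2inf_reg}, choosing $\beta_j=2-\lambda_j+\varepsilon$, so that $|\nabla^2 u|\le c\,r_j^{-\beta_j}$ near each corner and $\nabla^2 u$ is bounded away from them. Away from the corners one gets $\int_\Omega\sigma^{-1}\,\mathrm dx\sim\lvert\ln d_I\rvert\sim\lnh$, contributing the rate $h^2\lnh^{3/2}$; near a corner the integral $\int_{\Omega_R^j}\sigma^{-1}r_j^{-2\beta_j}\,\mathrm dx$ scales like $d_I^{\bar\lambda-3/2-\varepsilon}\sim h^{2\bar\lambda-3-2\varepsilon}$ whenever $\bar\lambda<3/2$, producing the reduced rate $h^{-1+2\bar\lambda-2\varepsilon}$ and thus the $\min$ in \eqref{eq:weighted_estimate}.

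I expect the main obstacles to be twofold. First, the weight bookkeeping must be arranged so that the dual estimate Lemma~\ref{lem:weighted_reg_h2}(iii) returns precisely $\|\sigma^{-3/2}e\|_{L^2(\Omega)}$ for the kick-back while the complementary weight $\sigma^{-1/2}$ on $\nabla^2 u$ is exactly the one for which the corner integral can be evaluated; any other split would either lose the absorbable term or leave an unbounded weighted norm of $\nabla^2 u$. Second, the pollution terms $d_J^{-1}\|e\|_{L^2(\Omega_J'')}$ from the local energy estimate must be absorbed: after re-weighting via $d_J^{-1}\sim\sigma^{-1}$ and summing, these can be estimated against $\|\sigma^{-3/2}e\|_{L^2(\Omega)}$ (using that $\sigma\ge d_I\sim h^2$ and that $c_I$ may be taken large and $h\le h_0$ small) and kicked back as well, at the price of the auxiliary control of $e$ in weaker weighted norms afforded by Lemma~\ref{lem:weighted_reg_h1}. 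Collecting the surviving factors of $\lvert\ln d_I\rvert\sim\lnh$ yields the stated power $\lnh^{3/2}$.
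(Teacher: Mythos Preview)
Your proposal is correct and follows essentially the same route as the paper's proof. The only cosmetic difference is that you set up the dual problem with right-hand side $\sigma^{-3}e$ directly, whereas the paper uses the supremum representation $-\Delta w=\sigma^{-3/2}\varphi$ with $\|\varphi\|_{L^2(\Omega)}=1$; after that, the dyadic localization, the local energy estimate for $\nabla e$, the application of Lemma~\ref{lem:weighted_reg_h2}(ii)--(iii) to the dual solution, the kick-back of the pollution term via $c_I$ large, and the control of $\|\sigma^{-1/2}\nabla^2 u\|_{L^2(\Omega)}$ through Lemma~\ref{lem:V2inf_reg} (worked out in the paper as Lemma~\ref{lem:integrals}) are exactly as in the paper.
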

\begin{proof} 
 The norm on the left-hand side of \eqref{eq:weighted_estimate} possesses the representation
 \[
  \|\sigma^{-3/2}(u-u_h)\|_{L^2(\Omega)} = \sup_{\genfrac{}{}{0pt}{}{\varphi\in L^2(\Omega)}{\|\varphi\|_{L^2(\Omega)}=1}} \left(u-u_h,\sigma^{-3/2}\varphi\right)_{L^2(\Omega)}.
 \]
 Let $w\in H^1_0(\Omega)$ be the solution of the dual problem
 \begin{equation}\label{eq:dual}
  -\Delta w = \sigma^{-3/2}\varphi \quad\mbox{in } \Omega,\qquad w=0\quad\mbox{on } \Gamma.
 \end{equation}
 Then, we obtain using Galerkin orthogonality and the Cauchy-Schwarz inequality
 \begin{align}\label{eq:nitsche}
  \|\sigma^{-3/2}(u-u_h)\|_{L^2(\Omega)} &= (\nabla(u-u_h),\nabla(w-I_h w))_{L^2(\Omega)} \nonumber\\
  &\le \sum_{J=0}^I\|u-u_h\|_{H^1(\Omega_J)}\|w-I_h w\|_{H^1(\Omega_J)},
 \end{align} 
 where $I_h w$ denotes the Lagrange interpolant of $w$.
 An application of the local finite element error estimate from \cite[Theorem 3.4]{DGS11}, 
 the interpolation error estimate 
 \[
  \|u-I_h u\|_{H^\ell(T)} \le c h_T^{2-\ell} \|\nabla^2 u\|_{L^2(T)} \le c h^{2-\ell} d_J^{(2-\ell)/2}\|\nabla^2 u\|_{L^2(T)},\quad \ell=0,1,
 \] 
 and $h d_J^{-1/2} \le c_I^{-1/2} \le c$  yield the estimate
 \begin{align}
   \|u-u_h\|_{H^1(\Omega_{J})}  &\le c \left(\inf_{\genfrac{}{}{0pt}{}{\chi\in V_h}{\chi|_\Gamma = u_h|_\Gamma}} \left(\|\nabla(u-\chi)\|_{L^2(\Omega_{J}')}
  + d_J^{-1} \|u-\chi\|_{L^2(\Omega_{J}')}\right) + d_J^{-1}\|u-u_h\|_{L^2(\Omega_{J}')}\right) \label{eq:local_h1}\\
  &\le c\left(h d_J^{1/2} \|\nabla^2 u\|_{L^2(\Omega_J'')} + d_J^{-1}\|u-u_h\|_{L^2(\Omega_J')}\right)\label{eq:local_h1_2}
 \end{align}
 for all $J=0,\ldots,I$.
 For the dual solution we get in an analogous way the estimate
 \begin{equation}\label{eq:int_error_w}
  \|\nabla(w-I_h w)\|_{L^2(\Omega_J)} \le c h d_J^{1/2}\|\nabla^2 w\|_{L^2(\Omega_J')}
 \end{equation}
 for all $J=0,\ldots,I$. 
 Insertion of \eqref{eq:local_h1_2} and \eqref{eq:int_error_w} into \eqref{eq:nitsche},
 and summation over all subsets while taking into account \eqref{eq:relation_sigma_dJ} leads to
 \begin{align*}
  &\|\sigma^{-3/2}(u-u_h)\|_{L^2(\Omega)}\\
   &\quad\le c \Big(h^2 \|\sigma^{-1/2}\nabla^2 u\|_{L^2(\Omega)} \|\sigma^{3/2}\nabla^2 w\|_{L^2(\Omega)} +h\|\sigma^{-3/2}(u-u_h)\|_{L^2(\Omega)} \|\sigma \nabla^2 w\|_{L^2(\Omega)}\Big) \\
  &\quad \le c \Big(h^2 \lnh \|\sigma^{-1/2}\nabla^2 u\|_{L^2(\Omega)} \|\varphi\|_{L^2(\Omega)} 
  + h \|\sigma^{-3/2}(u-u_h)\|_{L^2(\Omega)}\|\sigma^{-1/2}\varphi\|_{L^2(\Omega)}\Big).
 \end{align*}
 In the last step we applied the a priori estimates from Lemma \ref{lem:weighted_reg_h2}.
 Next, we exploit the property $\sigma^{-1/2} \le d_I^{-1/2}$ and the assumption $\|\varphi\|_{L^2(\Omega)} =1$.
 Moreover, by choosing $c_I$ sufficiently large, we obtain due to the relation $d_I=c_I h^2$,
 \[
  c h d_I^{-1/2} = c c_I^{-1/2} \le 1/2.
 \]
 Hence, we can kick back the latter term to the left-hand side. This finally implies
 \begin{equation}\label{eq:last_step}
  \|\sigma^{-3/2}(u-u_h)\|_{L^2(\Omega)} \le 
  c h^2 \lnh \|\sigma^{-1/2}\nabla^2 u\|_{L^2(\Omega)}.
 \end{equation} 
 It remains to estimate the weighted norm on the right-hand side.
 Therefore, we use the decomposition $\Omega=(\cup_{j\in \mathcal C} \Omega_R^j)\cup \hat \Omega_{R/2}$ 
 already used in the norm definition \eqref{eq:weighted_norm}.
 In the interior of the domain we bound the norm on the right-hand side of \eqref{eq:last_step} by
 \[\label{eq:hoelder_interior}
  \|\sigma^{-1/2} \nabla^2 u\|_{L^2(\hat\Omega_{R/2})} \le \|\sigma^{-1/2}\|_{L^2(\hat\Omega_{R/2})} \|\nabla^2 u\|_{L^\infty(\hat\Omega_{R/2})}.
 \]
 In each subset $\Omega_R^j$ we apply the estimate
 \[\label{eq:hoelder_corner}
  \|\sigma^{-1/2}\nabla^2 u\|_{L^2(\Omega_R^j)}
  \le \|\sigma^{-1/2} r_j^{-\beta_j}\|_{L^2(\Omega_R^j)} \|r_j^{\beta_j} \nabla^2 u\|_{L^\infty(\Omega_R^j)}.
 \]
 The previous inequalities and the regularity results from Lemma \ref{lem:V2inf_reg} imply
 \begin{align}
	 \|\sigma^{-1/2}\nabla^2 u\|_{L^2(\Omega)} &\le 
	 c \left(\|\sigma^{-1/2}\|_{L^2(\hat\Omega_{R/2})}+\max_{j\in\mathcal{C}}\|\sigma^{-1/2} r_j^{-\beta_j}\|_{L^2(\Omega_R^j)}\right)\|u\|_{V^{2,\infty}_{\vec\beta}(\Omega)}\notag\\
	 &\le 
	 c \left(\|\sigma^{-1/2}\|_{L^2(\hat\Omega_{R/2})}+\max_{j\in\mathcal{C}}\|\sigma^{-1/2} r_j^{-\beta_j}\|_{L^2(\Omega_R^j)}\right)\|f\|_{C^{0,\sigma}(\overline\Omega)},\label{eq:weightedreg}
 \end{align}
 provided that $\beta_j=\max\{0,2-\lambda_j+\varepsilon\}<2$ with $\varepsilon>0$. Once we have shown that
 \begin{equation}\label{eq:estimatesigma}
	 \|\sigma^{-1/2}\|_{L^2(\hat\Omega_{R/2})}+\max_{j\in\mathcal{C}}\|\sigma^{-1/2} r_j^{-\beta_j}\|_{L^2(\Omega_R^j)}\le c |\ln h|^{1/2} h^{\min\{0,1-2\overline\beta\}}
 \end{equation}
 with $\overline\beta:=\max\{0,2-\overline\lambda+\varepsilon\}$ and $\varepsilon>0$ sufficiently small the assertion follows. The proof of \eqref{eq:estimatesigma} is postponed to Lemma \ref{lem:integrals}.
 \end{proof}
 
 \begin{lemma}\label{lem:integrals}
  Let $\Omega$ be convex. For $\beta_j\in [0,1)$, $j\in\mathcal C$, there are the estimates 
  \begin{align}
   \|\sigma^{-1/2} r_j^{-\beta_j} \|_{L^2(\Omega_R^j)} &\le c \lnh^{1/2} \times \begin{cases} h^{\min\{0,1-2\beta_j\}} & \text{if }\beta_j\neq \frac12,\\ \lnh^{1/2} &\text{if }\beta_j=\frac12,\end{cases}\label{eq:int1}\\
   \|\sigma^{-1/2}\|_{L^2(\hat\Omega_{R/2})} &\le c \lnh^{1/2}.
  \end{align}
 \end{lemma}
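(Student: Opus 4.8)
The plan is to reduce both bounds to pure volume computations by means of the dyadic decomposition $\{\Omega_J\}_{J=0}^I$ from \eqref{eq:dyadic}. Since $\sigma\sim d_J$ on $\Omega_J$ by \eqref{eq:relation_sigma_dJ}, I may replace the weight $\sigma^{-1}$ by the constant $d_J^{-1}$ on each dyadic ring and write, for the corner term,
\[
 \|\sigma^{-1/2} r_j^{-\beta_j}\|_{L^2(\Omega_R^j)}^2 \sim \sum_{J=0}^I d_J^{-1}\int_{\Omega_J\cap\Omega_R^j} r_j^{-2\beta_j}\,\mathrm dx,
\]
and analogously $\|\sigma^{-1/2}\|_{L^2(\hat\Omega_{R/2})}^2\sim\sum_{J=0}^I d_J^{-1}|\Omega_J\cap\hat\Omega_{R/2}|$. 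The task then splits into (a) estimating the weighted areas of the intersections $\Omega_J\cap\Omega_R^j$ and $\Omega_J\cap\hat\Omega_{R/2}$, and (b) summing the resulting geometric series in $J$, where I recall that $d_J=2^{-J}d_0$ ranges from $d_0\sim\diam(\Omega)$ down to $d_I=c_Ih^2$, and that $I\sim\lnh$.

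The interior estimate is immediate. Away from the corners the level set $\{\rho\sim d_J\}$ is a tube of width $\sim d_J$ following the boundary, so $|\Omega_J\cap\hat\Omega_{R/2}|\sim d_J$ and hence $\sum_{J=0}^I d_J^{-1}|\Omega_J\cap\hat\Omega_{R/2}|\sim\sum_{J=0}^I 1=I+1\sim\lnh$, which after taking the square root gives the second bound.

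For the corner term I exploit convexity to understand the geometry of $\Omega_J\cap\Omega_R^j$. Since $\omega_j<\pi$, the nearest boundary point of any $x\in\Omega_R^j$ lies on one of the two edges meeting at $\c_j$ (for $R$ small enough), so $\Omega_J\cap\Omega_R^j$ splits into two strips of width $\sim d_J$ running along these edges, parametrized by arclength $s$ from the corner, together with a corner cap of size $\sim d_J\times d_J$ near $\c_j$. On the strips one has $r_j\sim s$ and on the cap $r_j\sim d_J$, which yields
\[
 \int_{\Omega_J\cap\Omega_R^j} r_j^{-2\beta_j}\,\mathrm dx \sim d_J^{2-2\beta_j} + d_J\int_{d_J}^R s^{-2\beta_j}\,\mathrm ds.
\]
Multiplying by $d_J^{-1}$ and summing over $J$ leaves the two contributions $\sum_J d_J^{1-2\beta_j}$ and $\sum_J\int_{d_J}^R s^{-2\beta_j}\,\mathrm ds$, both evaluated by distinguishing $\beta_j<1/2$, $\beta_j=1/2$ and $\beta_j>1/2$. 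For $\beta_j<1/2$ the first sum is dominated by $J=0$ and is $O(1)$, while the second is a sum of $\sim\lnh$ terms each $O(1)$, hence $\sim\lnh$; for $\beta_j>1/2$ both sums are dominated by the $J=I$ term $\sim d_I^{1-2\beta_j}\sim h^{2-4\beta_j}$; and for $\beta_j=1/2$ the inner integral becomes $\int_{d_J}^R s^{-1}\mathrm ds\sim J$, so that $\sum_{J=0}^I J\sim I^2\sim\lnh^2$ dominates. Taking square roots reproduces the three regimes of \eqref{eq:int1}.

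I expect the main obstacle to be the borderline exponent $\beta_j=1/2$: there the inner $s$-integral is logarithmic in $d_J$, and it is precisely the summation $\sum_J J\sim I^2\sim\lnh^2$ that produces the extra factor $\lnh^{1/2}$ in the statement, so the logarithms must be tracked carefully rather than absorbed into generic constants. The decomposition of $\Omega_J\cap\Omega_R^j$ into two edge strips plus a corner cap is the other place where convexity of $\Omega$ is genuinely used; for $R$ sufficiently small this description is exact up to constants, and the corner cap contributes the term $d_J^{2-2\beta_j}$ which governs the behaviour of the sum as $J\to I$ when $\beta_j>1/2$.
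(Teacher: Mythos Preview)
Your proof is correct and takes a genuinely different route from the paper's argument. The paper does not use the dyadic decomposition $\{\Omega_J\}$ here at all; instead it works directly in local coordinates and splits $\Omega_{\Gamma_j}\cap\Omega_R^j$ into only two pieces: a small ball $B_{d_I}(\c_j)$ of radius $d_I=c_Ih^2$, on which $\sigma\ge d_I$ and the integral $\int_0^{d_I} r^{1-2\beta_j}\,\mathrm dr$ is computed in polar coordinates, and the remaining set, which is enlarged to a rectangle $(\underline x_j,R)\times(0,R)$ so that the integral factorises as $\int_{\underline x_j}^R x^{-2\beta_j}\,\mathrm dx\cdot\int_0^R(d_I+y)^{-1}\,\mathrm dy$, the second factor producing the $\lnh$. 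Your approach instead keeps the weight $\sigma$ honest via $\sigma\sim d_J$ on each dyadic ring, and turns everything into geometric sums in $J$.

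Both arguments are short; the paper's is perhaps slightly more direct (two regions, two integrals), while yours fits more naturally into the dyadic machinery already set up in \eqref{eq:dyadic}--\eqref{eq:relation_sigma_dJ} and tracks the geometry of $\Omega_J\cap\Omega_R^j$ more precisely. In fact your bound is marginally sharper than the paper's for $\beta_j>\tfrac12$: the rectangle enlargement in the paper produces $\lnh\cdot h^{2-4\beta_j}$ for the squared norm, whereas your geometric sum $\sum_J d_J^{1-2\beta_j}\sim d_I^{1-2\beta_j}$ gives $h^{2-4\beta_j}$ without the logarithm. Both of course satisfy the stated inequality. One small point worth making explicit in your write-up: for $J=I$ the ``corner cap'' contains $\c_j$ itself, so $r_j$ is not $\sim d_I$ there but ranges down to zero; the estimate $\int_{\text{cap}}r_j^{-2\beta_j}\,\mathrm dx\lesssim d_I^{2-2\beta_j}$ then uses $\beta_j<1$ for integrability, which you should state.
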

 \begin{proof}
  Recall the decomposition of $\Omega$ already used in the proof of Lemma \ref{lem:weighted_reg_h1}.
  There, we introduced domains $\Omega_{\Gamma_j}$, $j\in\mathcal C$, such that $\dist(x,\Gamma_j) = \rho(x)$ for all $x\in\Omega_{\Gamma_j}$.
  Moreover, we constructed integration bounds in the local coordinates $(x_j,y_j)$, i.e., $0<x_j < \bar x_j$ and $0<y_j < \bar y_j(x_j)$.
  Based on this, we first show \eqref{eq:int1}. Due to symmetry reasons it suffices to estimate the integral on the subset $\Omega_{\Gamma_j}\cap\Omega_R^j$.
  This is done in two steps according to the coloring in Figure \ref{fig:integration1}.
  \begin{figure}[htb]
    \begin{center}
      \subfloat[Decomposition of $\Omega_{\Gamma_j} \cap \Omega_R^j$\label{fig:integration1}]{
        \includegraphics[height=4.5cm]{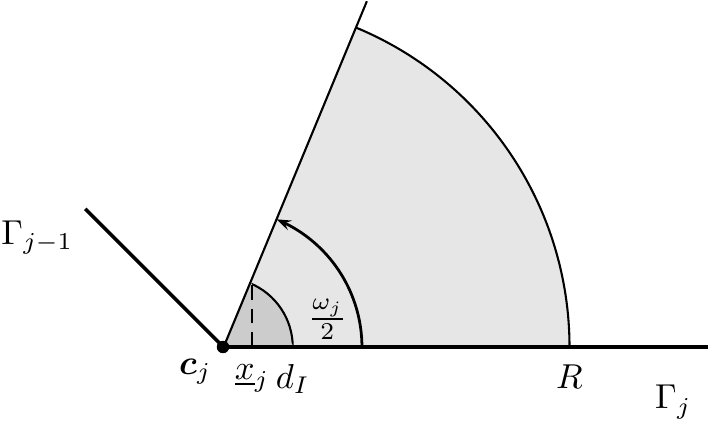}
      }\qquad
      \subfloat[Overlap of $\Omega_{\Gamma_j}\cap \hat\Omega_{R/2}$\label{fig:integration2}]{
        \includegraphics[height=4cm]{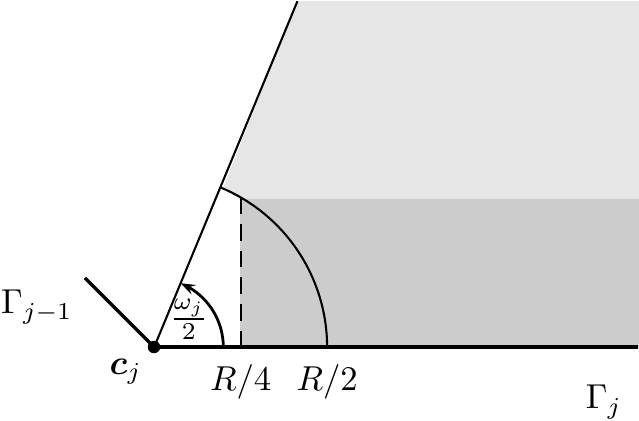}
      }      
    \end{center}
    \caption{Illustration of the integration domains used in the proof of Lemma \ref{lem:integrals}.}
    \label{fig:integration}
  \end{figure}
  First,
  in the circular sector $\Omega_{\Gamma_j}\cap B_{d_I}$, where $B_{d_I}$ denotes the ball with radius $d_I=c_Ih^2$ around the corner $\boldsymbol c_j$ (see also the dark gray region in Figure \ref{fig:integration1}),
  we use polar coordinates and obtain
  \begin{equation}\label{eq:integral_1}
   \|\sigma^{-1/2} r_j^{-\beta_j}\|_{L^2(\Omega_{\Gamma_j}\cap B_{d_I})}^2
   \le d_I^{-1} \int_0^{\omega_j/2} \int_0^{d_I} r_j^{1-2\beta_j}\mathrm dr_j\mathrm d\varphi_j
   \le c \lnh h^{2\min\{0,1-2\beta_j\}}.
  \end{equation} 
 The remaining subdomain $(\Omega_R^j\cap\Omega_{\Gamma_j})\backslash B_{d_I}$ (illustrated by the light gray region in Figure~\ref{fig:integration1}) is enlarged to the rectangular domain bounded by 
 $\underline x_j < x_j < R$ with $\underline x_j = \sin(\omega_j/2) d_I \sim d_I$ and $0 < y_j < R$. Moreover, we exploit that $r_j > x_j > 0$. Having in mind that $d_I=c_Ih^2$, this leads~to
 \begin{equation}\label{eq:integral_2}
  \|\sigma^{-1/2} r_j^{-\beta_j}\|_{L^2(\Omega_R^j\cap\Omega_{\Gamma_j}\setminus B_{d_I})}^2
  \le \int_{\underline x_j}^{R} \int_0^{R} \frac{x_j^{-2\beta_j}}{d_I+y_j}\mathrm dy_j \mathrm dx_j \le c \lnh\times \begin{cases}
  h^{2\min\{0,1-2\beta_j\}} & \text{if }\beta_j\neq \frac12,\\ \lnh &\text{if }\beta_j=\frac12.
  \end{cases}
 \end{equation}
 The inequalities \eqref{eq:integral_1} and \eqref{eq:integral_2}, together with
 analogous arguments for the domain $\Omega_R^j\cap\Omega_{\Gamma_{j-1}}$, imply the desired estimate on $\Omega_R^j$.
 
 To show the estimate on $\hat\Omega_{R/2}$ we first integrate over the domains
 \[\breve\Omega_{R/4}^j:=\{F_i(x_j,y_j)\in\mathbb R^2\colon R/4 < x_j <
   \bar x_j-R/4,\ 0<y_j < \sqrt3R/4\},\]
 see also the dark gray region in Figure \ref{fig:integration2}.
 For each $j\in\mathcal C$ we obtain the estimate
 \[
  \|\sigma^{-1/2}\|_{L^2(\breve\Omega_{R/4}^j)}^2 \le c \int_{R/4}^{\overline x_j-R/4} \int_0^{\sqrt{3}R/4} (d_I+y_j)^{-1}\mathrm dy_i\mathrm dx_i
  \le c \lnh.
 \]
 On the remaining set $\hat\Omega_{R/2}\setminus \cup_{j=1}^d \breve\Omega_{R/4}^j$ the weight $\sigma$ is of order one and vanishes in the generic constant.
 Thus,
 \[
  \|\sigma^{-1/2}\|_{L^2(\hat\Omega_{R/2}\setminus\cup_{j=1}^d\breve\Omega_{R/4}^j)}^2 \le c.
 \]
 This implies the second estimate.
 \end{proof}
 
 In the remainder of this section we prove an analogue of Theorem \ref{thm:weighted_estimate} which not only requires less regular data but also holds in non-convex domains.
 This requires indeed some rigorous modifications as the solution of the dual problem 
 \eqref{eq:dual} fails to be in $H^2(\Omega)$ if $\Omega$ is non-convex. Moreover, we do not exploit weighted $W^{2,\infty}$-regularity of the solution, but remain in the 
 weighted $H^2$-setting.
 \begin{theorem}\label{thm:weighted_estimate_nonconvex}
 Let $\overline\lambda :=\min_{j\in\C}\lambda_j$.
 Assume that $f\in V^{0,2}_{\vec\alpha}(\Omega)$ with 
 $\alpha_j := \max\{0,1-\lambda_j+\varepsilon\}$ for all $j\in\mathcal C$ with arbitrary but sufficiently small $\varepsilon>0$. 
 For $c_I>1$ sufficiently large, there exists some $h_0=h_0(c_I)>0$ such that the estimate
 \begin{equation}\label{eq:weighted_estimate_nonconvex}
  \|\sigma^{-3/2}(u-u_h)\|_{L^2(\Omega)} \le c h^{\min\{1,-1+2\bar\lambda-2\varepsilon\}} \|f\|_{V^{0,2}_{\vec\alpha}(\Omega)}
 \end{equation}
 holds for all $h\le h_0$.
\end{theorem}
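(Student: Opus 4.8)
The plan is to keep the duality framework of the proof of Theorem~\ref{thm:weighted_estimate}, but to replace the two ingredients that are no longer available in the non-convex setting, namely the $\sigma$-weighted global estimates of Lemma~\ref{lem:weighted_reg_h2}(ii),(iii) and the weighted $W^{2,\infty}$-regularity of Lemma~\ref{lem:V2inf_reg}, by the corner-weighted $V^{2,2}$-regularity of Lemma~\ref{lem:V2p_reg}. As before I would write
\[
 \|\sigma^{-3/2}(u-u_h)\|_{L^2(\Omega)} = \sup_{\|\varphi\|_{L^2(\Omega)}=1} (u-u_h,\sigma^{-3/2}\varphi)_{L^2(\Omega)},
\]
introduce the dual solution $w\in H^1_0(\Omega)$ of $-\Delta w = \sigma^{-3/2}\varphi$, and use Galerkin orthogonality to reduce the estimate to $(\nabla(u-u_h),\nabla(w-\chi_h))_{L^2(\Omega)}$ for an admissible discrete approximation $\chi_h$ of $w$. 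Since the regularization guarantees $\sigma^{-3/2}\le d_I^{-3/2}$, the dual right-hand side still lies in $L^2(\Omega)$ and $w$ is well defined; however, as $w$ fails to be in $H^2$ near the non-convex corners, the nodal interpolation estimate used in \eqref{eq:int_error_w} must be replaced by its weighted counterpart (Scott--Zhang type), whose local error is controlled by $\|w\|_{V^{2,2}_{\vec\gamma}}$ on the corresponding patch $\Omega_J'$.

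The decisive structural change is that the purely $\sigma$-weighted, dyadic treatment of the convex proof breaks down in the corner neighborhoods: near a non-convex corner one has $\nabla^2 u\sim r_j^{\lambda_j-2}$ with $\lambda_j<1$, hence $\nabla^2 u\notin L^2(\Omega_R^j)$, and since $\sigma=d_I+\rho$ is bounded away from zero on $\Omega_R^j$, no power $\sigma^a$ can restore integrability, so $\|\sigma^a\nabla^2 u\|_{L^2(\Omega_R^j)}=\infty$ for every $a$. I would therefore split the domain, as in \eqref{eq:weighted_norm}, into the corner neighborhoods $\Omega_R^j$ and the interior region $\hat\Omega_{R/2}$, and estimate the two contributions to the Galerkin error separately. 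On $\hat\Omega_{R/2}$ both $u$ and $w$ are locally of class $H^2$, so the dyadic layers from \eqref{eq:dyadic} together with \cite[Theorem 3.4]{DGS11} and Lemma~\ref{lem:weighted_reg_h2}(i) reproduce the local estimates \eqref{eq:local_h1}--\eqref{eq:int_error_w} restricted to the interior; the global bounds $\|\nabla w\|_{L^2(\Omega)}\le c\|\sigma^{-1/2}\varphi\|_{L^2(\Omega)}$ and $\|\sigma\nabla^2 w\|_{L^2(\hat\Omega_{R/2})}\le c\|\sigma^{-1/2}\varphi\|_{L^2(\Omega)}$ now follow from the general part (i) of Lemma~\ref{lem:weighted_reg_h1} and Lemma~\ref{lem:weighted_reg_h2}, rather than from their convexity-dependent parts, and the kick-back factor is again $h\,d_I^{-1/2}=c\,c_I^{-1/2}$, absorbed for $c_I$ large. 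On each $\Omega_R^j$ I would instead bound the local finite element error through the corner-weighted norm $\|r_j^{\alpha_j}\nabla^2 u\|_{L^2(\Omega_R^j)}$, exploiting that the boundary-concentrated grading \eqref{eq:ref_cond} forces the mesh size at the corner to be of order $h^2$.

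For the regularity itself I would apply Lemma~\ref{lem:V2p_reg} to both solutions: the primal solution satisfies $u\in V^{2,2}_{\vec\alpha}(\Omega)$ with $\|u\|_{V^{2,2}_{\vec\alpha}(\Omega)}\le c\|f\|_{V^{0,2}_{\vec\alpha}(\Omega)}$ because $\alpha_j=\max\{0,1-\lambda_j+\varepsilon\}\in(1-\lambda_j,1)$, and the dual solution satisfies $w\in V^{2,2}_{\vec\gamma}(\Omega)$ for a weight vector $\vec\gamma$ admissible in Lemma~\ref{lem:V2p_reg}, with $\|\sigma^{-3/2}\varphi\|_{V^{0,2}_{\vec\gamma}(\Omega)}\le c\,d_I^{-3/2}\|\varphi\|_{L^2(\Omega)}$. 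Combining the corner contributions, integrating the weights against the singular second derivatives as in Lemma~\ref{lem:integrals}, and summing over the dyadic layers then produces the exponent $-1+2\overline\lambda-2\varepsilon$ from the corner singularities together with the cap $h^{\min\{1,-1+2\overline\lambda-2\varepsilon\}}$, the cap being one power of $h$ worse than in the convex case precisely because only the weaker $V^{2,2}$-regularity, and not the $W^{2,\infty}$-regularity of Lemma~\ref{lem:V2inf_reg}, is available here.

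The main obstacle I anticipate is the bookkeeping in the corner neighborhoods, where one must reconcile three length scales, the corner distance $r_j$, the regularized boundary distance $\sigma$, and the local mesh size $h_T\sim h\sqrt{\rho_T}$, and, at the same time, cancel the large factor $d_I^{-3/2}=c\,h^{-3}$ inherited from the dual right-hand side $\sigma^{-3/2}\varphi$ against the positive powers of $h$ coming from the interpolation error and the mesh grading, so that the final rate comes out sharp. Ensuring that the admissible weight ranges for $\vec\alpha$ and $\vec\gamma$ are compatible, so that the product $\|r_j^{\alpha_j}\nabla^2 u\|_{L^2(\Omega_R^j)}\,\|r_j^{\gamma_j}\nabla^2 w\|_{L^2(\Omega_R^j)}$ remains integrable against the remaining powers of $\sigma$ with the correct exponent, is the delicate point; it is here that the choice $\alpha_j=\max\{0,1-\lambda_j+\varepsilon\}$ and the lower bound on $c_I$ enter.
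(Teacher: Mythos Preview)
Your plan deviates from the paper's proof in two essential respects, and the difficulty you flag at the end is precisely where the plan does not yet close.

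First, the paper does \emph{not} keep the dual problem with right-hand side $\sigma^{-3/2}\varphi$. Since in the non-convex case the target rate is at most one, it begins by trading one half power of the weight for a factor $h^{-1}$,
\[
  \|\sigma^{-3/2}(u-u_h)\|_{L^2(\Omega)} \le c\,h^{-1}\,\|\sigma^{-1}(u-u_h)\|_{L^2(\Omega)},
\]
and only then runs the duality argument with $-\Delta w=\sigma^{-1}\varphi$. This single move is what eliminates the large factor $d_I^{-3/2}=c\,h^{-3}$ that you correctly identify as the main obstacle in your scheme: with exponent $-1$ instead of $-3/2$, the dual estimates stay controlled by quantities of order one.

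Second, the paper does \emph{not} split the domain into corner neighborhoods $\Omega_R^j$ versus $\hat\Omega_{R/2}$; it keeps the boundary-distance dyadic layers $\Omega_J$ and instead distinguishes the \emph{inner} layers $J\le I-3$ from the three \emph{outer} layers $J\in\{I-2,I-1,I\}$. On the inner layers $\Omega_J''$ does not touch the boundary (hence no corner), so $w\in H^2(\Omega_J'')$ and Lemma~\ref{lem:weighted_reg_h2}(i) together with Lemma~\ref{lem:weighted_reg_h1}(i) give $\|w-I_hw\|_{H^1(\Omega_J)}\le c\,h\,d_J^{-1/2}(\|\nabla w\|_{L^2(\Omega_J'')}+\|\varphi\|_{L^2(\Omega_J'')})$ and $\|\nabla w\|_{L^2(\Omega)}\le c$; the kick-back factor on these layers is $c\,c_I^{-1/2}$. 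On the outer layers, where $w$ may fail to be $H^2$, the paper uses only the crude global regularity $w\in H^{3/2-\kappa}(\Omega)$ from the Jerison--Kenig shift theorem, bounding $\|w\|_{H^{3/2-\kappa}(\Omega)}\le c\,d_I^{-1/2+\kappa}$ via $\|\sigma^{-1}\varphi\|_{H^{-1/2-\kappa}(\Omega)}$; the resulting kick-back factor there is $c\,c_I^{-1/4+\kappa/2}$, again absorbable for $c_I$ large. No corner-weighted regularity for $w$ (your $V^{2,2}_{\vec\gamma}$ bound) is ever invoked.

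Your corner-neighborhood strategy, by contrast, leaves the kick-back unresolved on $\Omega_R^j$: the local finite element estimate \eqref{eq:local_h1} always produces the pollution term $d_J^{-1}\|u-u_h\|_{L^2(\Omega_J')}$, and on the outermost layer this must be paired with an interpolation error for $w$ that you can only control through $\|w\|_{V^{2,2}_{\vec\gamma}}\le c\,d_I^{-3/2}$. That product does not furnish a factor smaller than one, so the absorption fails. The paper's $H^{3/2-\kappa}$ argument is exactly the device that replaces this; combined with the preliminary $\sigma^{-3/2}\to\sigma^{-1}$ trade, it avoids the three-scale bookkeeping you anticipate altogether.
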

\begin{proof}  
  Having in mind the result of Theorem \ref{thm:weighted_estimate}, we first observe that we expect at most the convergence rate one in non-convex domains. Thus, we trade $\sigma^{-1/2}$ by $h^{-1}$ and it remains to show an estimate 
  in a weighted norm with a larger weight exponent. These ideas lead to
  \begin{equation}\label{eq:mod_nonconvex}
   \|\sigma^{-3/2}(u-u_h)\|_{L^2(\Omega)} \le c h^{-1} \|\sigma^{-1}(u-u_h)\|_{L^2(\Omega)}
   = c h^{-1}\sup_{\genfrac{}{}{0pt}{}{\varphi\in L^2(\Omega)}{\|\varphi\|_{L^2(\Omega)} = 1}} (u-u_h, \sigma^{-1}\varphi)_{L^2(\Omega)}.
 \end{equation}
  The dual problem reads in this case
  \begin{equation}\label{eq:dual_nonconvex}
   -\Delta w = \sigma^{-1}\varphi\quad \mbox{in}\ \Omega,\qquad w=0\quad \mbox{on}\ \Gamma.
  \end{equation}
  Analogous to \eqref{eq:nitsche}  we can show that   
  \begin{align}\label{eq:splitting_nonconvex}
   \|\sigma^{-1}(u-u_h)\|_{L^2(\Omega)} &\le c \sum_{J=0}^I \|u-u_h\|_{H^1(\Omega_J)} \|w-I_h w\|_{H^1(\Omega_J)},
  \end{align}
  and it remains to bound the local error terms on the right-hand side.
  In contrast to the proof of Theorem \ref{thm:weighted_estimate}, we have to distinguish between the inner subsets $\Omega_J$, $J=0,\ldots,I-3$ and the outer ones $J=I-2,I-1,I$, 
  as the dual solution $w$ may fail to be in $H^2(\Omega)$ due to the possibly non-convex corners.
  However, due to Lemma \ref{lem:V2p_reg}, the function $u$ belongs to
  $V^{2,2}_{\vec\alpha}(\Omega)$, which we are going to employ.
  
  In case that $J=0,\ldots,I-3$, we proceed as in the proof of Theorem \ref{thm:weighted_estimate}. Indeed, employing 
  the local estimate \eqref{eq:local_h1}, we obtain together with
  the property $r_{j,T}:=\dist(T,\boldsymbol c_j) \ge c d_J$, which holds for elements $T$ with $T\cap\Omega_J\ne\emptyset$,
  \begin{align}\label{eq:int_error_nonconvex}
   \|u-u_h\|_{H^1(\Omega_J)} &\le c \left(h d_J^{1/2}\|\nabla^2 u\|_{L^2(\Omega_J'')} + d_J^{-1}\|u-u_h\|_{L^2(\Omega_J')}\right) \nonumber\\
   &\le c \left(h d_J^{1/2-\overline\alpha} |u|_{V^{2,2}_{\vec\alpha}(\Omega_J'')} + d_J^{-1}\|u-u_h\|_{L^2(\Omega_J')}\right),
  \end{align}
  where we set $\overline \alpha =\max_{j\in \mathcal{C}}\alpha_j$. It is straightforward to confirm that the same estimate holds in the case $J=I-2,I-1,I$ as well.
  The only difference is, that local interpolation error estimates exploiting 
  weighted regularity, see e.\,g.\ \cite[Section 3.3]{ASW96}, have to be applied.
  Together with the refinement condition \eqref{eq:mesh_size_dJ} this leads to
  \[
    \|u-I_h u\|_{H^\ell(T)} \le c
    \begin{cases}
      h^{2(2-\ell-\alpha_j)} |u|_{V^{2,2}_{\alpha_j}(T)}, &\mbox{if}\ r_{j,T} = 0,\\
      h^{2-\ell}d_I^{1-\ell/2} r_{j,T}^{-\alpha_j} |u|_{V^{2,2}_{\alpha_j}(T)}, &\mbox{if}\ r_{j,T} > 0,
    \end{cases}    
  \]
  and an analogue to \eqref{eq:int_error_nonconvex} for the present case follows from
  $h = c_I^{-1/2} d_I^{1/2}$ and $r_{j,T} \ge ch^2 \ge c c_I^{-1}d_I$ if $r_{j,T}>0$.
  In case of $\ell=0$, we moreover have to exploit $h d_I^{-1/2} \le c c_I^{-1/2}$.

  Next, we derive interpolation error estimates for the dual problem.
  In case of $J=0,\ldots,I-3$ we obtain with standard interpolation error estimates, Lemma~\ref{lem:weighted_reg_h2} (i) and the property \eqref{eq:relation_sigma_dJ}
  \begin{align}\label{eq:int_error_w_interior_nonconvex}
   \|w-I_h w\|_{H^1(\Omega_J)} &\le c h d_J^{1/2} \|\nabla^2 w\|_{L^2(\Omega_J')} \le c h d_J^{-1/2}\left(\|\nabla w\|_{L^2(\Omega_J'')} + \|\varphi\|_{L^2(\Omega_J'')}\right).
  \end{align}
  In case of $J=I-2,I-1,I$ the function $w$ is less regular, in particular it does not belong to $H^2(\Omega_J)$ if $\Omega$ is non-convex.
  Instead, we exploit the $H^{3/2-\kappa}(\Omega)$-regularity of $w$ and obtain
  \begin{equation}\label{eq:int_error_w_boundary_nonconvex_start}
   \|w-I_h w\|_{H^1(\Omega_J)} \le c h^{1/2-\kappa} d_I^{1/4-\kappa/2} \|w\|_{H^{3/2-\kappa}(\Omega)}
 \end{equation}
 with $\kappa \in (0,1/2)$.
 To bound the norm of $w$ on the right-hand side, we apply the shift-theorem 
 from \cite[Theorem 0.5(b)]{JK95} to get
  \begin{equation}\label{eq:reg_w_h32}
   \|w\|_{H^{3/2-\kappa}(\Omega)}\le c\|\sigma^{-1}\varphi\|_{H^{-1/2-\kappa}(\Omega)}
   =c\sup_{\genfrac{}{}{0pt}{}{v\in H^{1/2+\kappa}_0(\Omega)}{v\ne0}} \frac{\int_{\Omega}\sigma^{-1}\varphi v\mathrm dx}{\|v\|_{H^{1/2+\kappa}_0(\Omega)}}.
  \end{equation}
  With the Cauchy-Schwarz inequality we obtain $\int_{\Omega}\sigma^{-1}\varphi v\mathrm dx\le c\|\varphi\|_{L^2(\Omega)}\|\sigma^{-1}v\|_{L^2(\Omega)}$.
  It remains to bound the latter factor by the $H^{1/2+\kappa}_0(\Omega)$-norm of $v$.  
  From $\|\sigma^{-1}v\|_{L^2(\Omega)} \le c d_I^{-1}\|v\|_{L^2(\Omega)}$ and \eqref{eq:poincare} we conclude  by an interpolation argument the estimate $\|\sigma^{-1} v\|_{L^2(\Omega)} \le d_I^{-1/2+\kappa} \|v\|_{H^{1/2+\kappa}_0(\Omega)}$.
  After insertion into \eqref{eq:reg_w_h32} we obtain from \eqref{eq:int_error_w_boundary_nonconvex_start} and the property $d_I=c_Ih^2$ the estimate
  \begin{equation}\label{eq:int_error_w_boundary_nonconvex}
    \|w-I_h w\|_{H^1(\Omega_J)} \le c h^{1/2-\kappa}d_I^{-1/4+\kappa/2}
    \le c c_I^{-1/4+\kappa/2}.
  \end{equation}
  
  We can now insert the estimates derived above into \eqref{eq:splitting_nonconvex}.
  First, we consider the sum over $J=0,\ldots,I-3$ only and obtain from \eqref{eq:int_error_nonconvex} and \eqref{eq:int_error_w_interior_nonconvex}
  as well as Lemma \ref{lem:weighted_reg_h1} (i) and $\|\varphi\|_{L^2(\Omega)}=1$
  \begin{align}\label{eq:product_interior}
   &\sum_{J=0}^{I-3} \|u-u_h\|_{H^1(\Omega_J)} \|w-I_h w\|_{H^1(\Omega_J)} \nonumber\\
   &\quad \le c \left(h^2 d_I^{-\overline\alpha}|u|_{V^{2,2}_{\vec\alpha}(\Omega)} + h d_I^{-1/2}\|\sigma^{-1}(u-u_h)\|_{L^2(\Omega)}\right)\left(\|\nabla w\|_{L^2(\Omega)} + \|\varphi\|_{L^2(\Omega)}\right) \nonumber\\
   &\quad \le c h^{2-2\overline\alpha} |u|_{V^{2,2}_{\vec\alpha}(\Omega)} + cc_I^{-1/2} \|\sigma^{-1}(u-u_h)\|_{L^2(\Omega)}.
  \end{align}
  For the sum over $J=I-2,I-1,I$ we use \eqref{eq:int_error_nonconvex} and \eqref{eq:int_error_w_boundary_nonconvex} instead and end up with
  \begin{align}
   \sum_{J=I-2}^{I}\|u-u_h\|_{H^1(\Omega_J)} \|w-I_h w\|_{H^1(\Omega_J)} &\le c \left(h d_I^{1/2-\overline\alpha}|u|_{V^{2,2}_{\vec\alpha}(\Omega)} + c_I^{-1/4+\kappa/2} \|\sigma^{-1}(u-u_h)\|_{L^2(\Omega)}\right) \nonumber\\
   &\le c h^{2-2\overline\alpha} |u|_{V^{2,2}_{\vec\alpha}(\Omega)} + cc_I^{-1/4+\kappa/2} \|\sigma^{-1}(u-u_h)\|_{L^2(\Omega)}.\label{eq:product_boundary}
  \end{align}  
  The inequalities \eqref{eq:product_interior} and \eqref{eq:product_boundary} together with \eqref{eq:splitting_nonconvex}
  yield
  \[
   \|\sigma^{-1}(u-u_h)\|_{L^2(\Omega)} \le c h^{2-2\overline\alpha} |u|_{V^{2,2}_{\vec\alpha}(\Omega)} + cc_I^{-1/4+\kappa/2} \|\sigma^{-1}(u-u_h)\|_{L^2(\Omega)}.
  \]
  As in the proof for the convex case, we set $c_I$ sufficiently large such that $c c_I^{-1/4+\kappa/2}
  \le 1/2$ ($\kappa\in (0,1/2)$) and we may kick back the latter term on the right-hand side to the left-hand side.
  To finish the proof, it just remains to insert the definition of the weight $\vec\alpha$ and to apply Lemma~\ref{lem:V2p_reg}. By our construction we have $\overline\alpha:=1-\overline\lambda+\varepsilon$
  which leads together with \eqref{eq:mod_nonconvex} to the desired estimate.
 \end{proof}

\section{Approximation of the surface flux}\label{sec:surfaceflux}
\subsection{Error estimates}
In the present section we apply the weighted finite element error estimates of Section \ref{sec:weighted_estimate} to derive error estimates for certain numerical approximations
of the surface flux of the solution of the Poisson equation.
\begin{theorem}\label{thm:estimate_normal_deriv}
 Let $f\in C^{0,\sigma}(\overline\Omega)$ with some $\sigma\in(0,1)$ if $\Omega$ is convex, and $f\in L^2(\Omega)$ if $\Omega$ is non-convex. 
 Denote by $u\in H^1_0(\Omega)$ and $u_h\in V_{0h}$ the solutions of \eqref{eq:weak_form_1} and \eqref{eq:fe_formulation}, respectively.
 Assume that the family of meshes $\{\mathcal T_h\}_{h>0}$ is refined according to \eqref{eq:ref_cond}.
 Then for any $\varepsilon>0$ 
 the following error estimate is fulfilled:
 \[
 \|\partial_n(u-u_h)\|_{L^2(\Gamma)} \le c h^{\min\{2,-1+2\bar\lambda-2\varepsilon\}} |\ln h|^{3/2} \times \begin{cases}
 \|f\|_{C^{0,\sigma}(\overline\Omega)}, &\mbox{if}\ \Omega\ \mbox{is convex},\\
 \|f\|_{L^2(\Omega)}, &\mbox{if}\ \Omega\ \mbox{is non-convex},
 \end{cases}
 \] 
 where $\overline\lambda :=\min_{j\in\C}\lambda_j$.
\end{theorem}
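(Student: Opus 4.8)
The plan is to reduce the boundary norm of the normal derivative error to the weighted interior $L^2$ error controlled by Theorem~\ref{thm:weighted_estimate} (convex case) and Theorem~\ref{thm:weighted_estimate_nonconvex} (non-convex case), together with a weighted norm of $\nabla^2 u$. First I would observe that, since both $u$ and $u_h$ vanish on $\Gamma$, their tangential derivatives along $\Gamma$ vanish, so that $\|\partial_n(u-u_h)\|_{L^2(\Gamma)}=\|\nabla(u-u_h)\|_{L^2(\Gamma)}$. Writing $e:=u-u_h$ and recalling that the boundary is covered by the elements of the strip $S_h=\cup\{T\in\mathcal T_h\colon\rho_T=0\}$, each of diameter $h_T\sim h^2$, an element-wise trace inequality applied to $\nabla e$ gives
\begin{equation*}
\|\partial_n(u-u_h)\|_{L^2(\Gamma)}^2 \le c\sum_{T\subset S_h}\left(h_T^{-1}\|\nabla e\|_{L^2(T)}^2 + h_T\|\nabla^2 e\|_{L^2(T)}^2\right) \le c\left(h^{-2}\|\nabla e\|_{L^2(S_h)}^2 + h^2\|\nabla^2 u\|_{L^2(S_h)}^2\right),
\end{equation*}
where I used $\nabla^2 e=\nabla^2 u$ because $u_h$ is piecewise linear.

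The first term is the crucial one. Since $S_h$ is contained in the innermost dyadic layers, on which $\sigma\sim d_I\sim h^2$, I would apply the local $H^1$ estimate \eqref{eq:local_h1_2} (respectively \eqref{eq:int_error_nonconvex} in the non-convex case) on the layer $\Omega_I$. Using $d_I=c_Ih^2$ this yields $h^{-1}\|\nabla e\|_{L^2(\Omega_I)}\le c\big(d_I^{1/2}\|\nabla^2 u\|_{L^2(\Omega_I'')}+d_I^{-3/2}\|e\|_{L^2(\Omega_I')}\big)$, so that $h^{-2}\|\nabla e\|_{L^2(S_h)}^2$ is controlled by $\|\sigma^{-3/2}(u-u_h)\|_{L^2(\Omega)}^2$ plus a weighted second-derivative term $h^2\|\sigma^{-1/2}\nabla^2 u\|_{L^2(\Omega)}^2$, after exploiting $d_I^{1/2}\sim h\sim\sigma^{1/2}$ on the layer. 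The error term is precisely the quantity estimated in Theorem~\ref{thm:weighted_estimate} and Theorem~\ref{thm:weighted_estimate_nonconvex}, which produces the rate $h^{\min\{2,-1+2\overline\lambda-2\varepsilon\}}$ together with the factor $\lnh^{3/2}$ in the convex case.

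It then remains to absorb the two $\nabla^2 u$ contributions. In the convex case both reduce, via $\|\nabla^2 u\|_{L^2(S_h)}\le c\sup_{S_h}\sigma^{1/2}\,\|\sigma^{-1/2}\nabla^2 u\|_{L^2(\Omega)}$, to $h^2\|\sigma^{-1/2}\nabla^2 u\|_{L^2(\Omega)}$, which has already been bounded in the proof of Theorem~\ref{thm:weighted_estimate} through the $V^{2,\infty}_{\vec\beta}$-regularity of Lemma~\ref{lem:V2inf_reg} and the integral estimate \eqref{eq:estimatesigma}, giving the same rate with only a factor $\lnh^{1/2}$; hence the $\lnh^{3/2}$ coming from the error term dominates. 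The \emph{main obstacle} is the non-convex case: near a reentrant corner $u$ behaves like $r_j^{\lambda_j}$ with $\lambda_j<1$, so $\nabla^2 u\notin L^2$ and the crude trace term $h^2\|\nabla^2 u\|_{L^2(S_h)}^2$ diverges on the corner-touching elements. I would therefore localize. On boundary elements away from the corners the function $u$ is smooth and the estimate above applies, whereas on a fixed neighborhood of each corner I would replace the second-derivative trace contribution by the weighted regularity $u\in V^{2,2}_{\vec\alpha}(\Omega)$ of Lemma~\ref{lem:V2p_reg} with $\alpha_j=\max\{0,1-\lambda_j+\varepsilon\}$, exploiting that the weight $r_j^{\alpha_j}$ compensates the corner singularity and that $r_{j,T}\ge ch^2$ for elements $T$ not touching $\boldsymbol c_j$; the single corner-touching boundary arc, of length $\sim h^2$, is handled directly using that $\partial_n u\in L^2(\Gamma)$ (which holds since $\lambda_j>1/2$) together with the very fine local mesh. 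Tracking the powers of $d_I\sim h^2$ through the weighted interpolation estimates already employed in Theorem~\ref{thm:weighted_estimate_nonconvex} reproduces the rate $h^{-1+2\overline\lambda-2\varepsilon}=h^{\min\{2,-1+2\overline\lambda-2\varepsilon\}}$, and collecting all contributions yields the claimed estimate.
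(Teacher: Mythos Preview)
Your overall strategy---reduce the boundary norm of $\partial_n(u-u_h)$ to the weighted interior estimate $\|\sigma^{-3/2}(u-u_h)\|_{L^2(\Omega)}$ plus a weighted $\nabla^2 u$ term---is exactly the paper's. In the convex case the routes differ only technically: instead of invoking the local finite-element estimate \eqref{eq:local_h1_2}, the paper converts the $H^1$-seminorm of $e_h$ on each boundary element directly by inserting the Lagrange interpolant and an inverse inequality, obtaining $h_T^{-1/2}|e_h|_{H^1(T)}\le c\big(h_T^{-3/2}\|u-u_h\|_{L^2(T)}+h_T^{1/2}|u|_{H^2(T)}\big)$, which sums to the same two contributions you reach. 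Either path gives the convex-case bound.

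The genuine difference is in the non-convex corner. The paper avoids any separate corner-element argument by replacing the $H^2$ reference-element trace by a $W^{2,q}$ trace with $q>4/3$, combined with the embedding $V^{0,2}_{\alpha_j}(\hat T)\hookrightarrow L^q(\hat T)$ (valid for $\alpha_j<1/2$). This gives uniformly $\|\partial_n e_h\|_{L^2(E)}\le c\big(h_T^{-1/2}|e_h|_{H^1(T)}+h_T^{1/2-\alpha_j}|u|_{V^{2,2}_{\alpha_j}(T)}\big)$ on corner elements and plugs directly into Lemma~\ref{lem:V2p_reg}. Your direct treatment of the corner edge can be made to work too, but as stated it is incomplete: merely invoking ``$\partial_n u\in L^2(\Gamma)$'' on an arc of length $\sim h^2$ yields only $o(1)$, not the rate $h^{2\lambda_j-1}$ needed to match $h^{-1+2\bar\lambda-2\varepsilon}$. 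You must use the quantitative $V^{2,2}_{\vec\alpha}$-regularity (equivalently $\partial_n u\sim r_j^{\lambda_j-1}$) to get that decay. Moreover, the discrete piece $\|\partial_n u_h\|_{L^2(E)}$ on the corner element is not addressed; one way to close this is $\|\partial_n u_h\|_{L^2(E)}\le c\,h_T^{-1/2}\|\nabla u_h\|_{L^2(T)}\le c\,h_T^{-1/2}\big(\|\nabla e\|_{L^2(T)}+\|\nabla u\|_{L^2(T)}\big)$, after which the first summand folds into your $h^{-2}\|\nabla e\|_{L^2(S_h)}^2$ term and the second is bounded via the same weighted regularity. With these two details supplied, your argument is correct; the paper's $W^{2,q}$-embedding device is simply a cleaner way to avoid the case split.
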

\begin{proof}
 We start with the estimate in the convex case and comment on the non-convex case later. Let $e_h:=u-u_h$. The reference elements for elements on the boundary and in the domain are denoted by $\hat E:=(0,1)$ and $\hat T:=\text{conv}\{(0,0),(1,0),(0,1)\}$, respectively.
 For an arbitrary function $v\colon T\to\mathbb R$ we use the
 notation $\hat v(\hat x)= v(F_T(\hat x))$,
 where $F_T\colon \hat T\to T$ is the affine reference transformation. For each $E\in\partial\mathcal T_h$ with associated element $T\in\mathcal T_h$ such that $\bar E=\bar T\cap\Gamma$, we obtain
 \begin{align*}
 \|\partial_{n} e_h\|_{L^2(E)}
 &\le c h_T^{-1/2} \|\partial_{\hat n} \hat e_h\|_{L^2(\hat E)}\le c h_T^{-1/2} \|\hat e_h\|_{H^{2}(\hat T)}\le c h_T^{-1/2}(|\hat e_h|_{H^{1}(\hat T)}+|\hat u|_{H^2(\hat T)})\\
 &\le c (h_T^{-1/2}|e_h|_{H^{1}(T)}+h_T^{1/2}|u|_{H^2(T)}),
 \end{align*}
 where we applied a standard trace theorem, the fact that $\hat u_h$ is piecewise linear, and the Poincar\'{e} inequality. By introducing the Lagrange interpolant $I_hu$ as an intermediate function for the first term, in combination with an inverse inequality, we deduce
 \begin{align}
 	\|\partial_{n} e_h\|_{L^2(E)}&\le c(h_T^{-1/2}|u-I_hu|_{H^{1}(T)}+h_T^{-3/2}(\|u-I_hu\|_{L^2(T)}+\|u-u_h\|_{L^2(T)})+h_T^{1/2}|u|_{H^2(T)})\notag\\
 	&\le c (h_T^{-3/2}\|u-u_h\|_{L^2(T)}+h_T^{1/2}|u|_{H^2(T)}),\label{eq:normalelement}
 \end{align}
 where we inserted a standard interpolation error estimate in the last step. Let $S_h$ denote the strip of elements at the boundary. Using $h_T\sim h^2$ if $\rho_T=0$, and the definition of the regularized distance function $\sigma$, we can show that
 \begin{align}
 	\|\partial_{n} e_h\|_{L^2(\Gamma)}
 	&\le c\left(\sum_{T\subset S_h}(\|\sigma^{-3/2}(u-u_h)\|_{L^2(T)}+h^2\|\sigma^{-1/2} \nabla^2u\|_{L^2(T)})^2\right)^{1/2}\notag\\
 	&\le c\left(\|\sigma^{-3/2}(u-u_h)\|_{L^2(\Omega)}+h^2\|\sigma^{-1/2} \nabla^2u\|_{L^2(\Omega)}\right)\label{eq:conv}.
 \end{align}
 The first assertion now follows from Theorem \ref{thm:weighted_estimate}, \eqref{eq:weightedreg} and \eqref{eq:estimatesigma}.
 
 In the non-convex case, we can not use the $H^2(T)$-regularity of $u$ if $r_{j,T}:=\dist(T,\boldsymbol c_j)=0$, $j\in\mathcal{C}_{non}$. Instead, we introduce $\alpha_j:=\max\{0,1-\lambda_j+\varepsilon\}$, $j\in\mathcal{C}$. Then, for each $E\in\partial\mathcal T_h$, whose corresponding element $T\in\mathcal T_h$ with $\bar E=\bar T\cap\Gamma$ fulfills $r_{j,T}=0$, $j\in\mathcal{C}_{non}$, we obtain
 \begin{align*}
   \|\partial_{n} e_h\|_{L^2(E)}
   &\le c h_T^{-1/2} \|\partial_{\hat n} \hat e_h\|_{L^2(\hat E)}
   \le c h_T^{-1/2}\|\hat e_h\|_{W^{2,q}(\hat T)}\le c h_T^{-1/2}(\|\hat e_h\|_{W^{1,q}(\hat T)}+|\hat u|_{W^{2,q}(\hat T)}) ,
 \end{align*}
 where we applied a standard trace theorem, which holds
 for any $q > 4/3$.
 For the first term we use the embedding $H^1(\hat T)\hookrightarrow W^{1,q}(\hat T)$ and the 
 Poincar\'{e} inequality. The second term is treated with 
 the embedding $V^{0,2}_{\alpha_j}(\hat T) \hookrightarrow  L^q(\hat T)$,
 which holds for any $\alpha_j < 1/2$ if $q$ is sufficiently close to $4/3$.
 From this we infer
 \begin{equation}\label{eq:V22_trafo}
   \|\partial_{n} e_h\|_{L^2(E)}\le c h_T^{-1/2}(|e_h|_{H^{1}(\hat T)}+|\hat u|_{V^{2,2}_{\alpha_j}(\hat T)})\le c (h_T^{-1/2}|e_h|_{H^{1}(T)}+h_T^{1/2-\alpha_j}|u|_{V^{2,2}_{\alpha_j}(T)}).
 \end{equation}
 Note, that the weight $\hat r$ contained in the space $V^{2,2}_{\alpha_j}(\hat T)$ is 
   related to the corner $(0,0)$ of $\hat T$. Without loss of generality we may define $F_T$ in such a way that
   $F_T(0,0)=\boldsymbol c_j$. This implies the property $\hat r := |\hat x| \sim h_T^{-1}\,r_j$ that we
   used in the last step of the estimate above.
 Now, as in \eqref{eq:normalelement}, we conclude that
 \[
	 \|\partial_{n} e_h\|_{L^2(E)}\le c (h_T^{-1/2}|u-I_hu|_{H^{1}(T)}+h_T^{-3/2}(\|u-I_hu\|_{L^2(T)}+\|u-u_h\|_{L^2(T)})+h_T^{1/2-\alpha_j}|u|_{V^{2,2}_{\alpha_j}(T)}).
 \]
 The resulting terms for the interpolation error can be treated with the estimate 
 \begin{equation}\label{eq:int_error_Sh}
 	h_T\|\nabla (u-I_h u)\|_{L^2(T)} +  \|u-I_h u\|_{L^2(T)} \le c h_T^{2-\alpha_j}\|u\|_{V^{2,2}_{\alpha_j}(T)}
 \end{equation}
 proved in \cite[Section 3.3]{ASW96}. This yields
 \[
 	\|\partial_{n} e_h\|_{L^2(E)}\le c(h_T^{-3/2}\|u-u_h\|_{L^2(T)}+h_T^{1/2-\alpha_j}\|u\|_{V^{2,2}_{\alpha_j}(T)}).
 \]
 For each $E \in \partial\mathcal{T}_h$ with positive distance to the non-convex corners, we obtain analogously to \eqref{eq:normalelement}
 \[
 	\|\partial_{n} e_h\|_{L^2(E)}\le c(h_T^{-3/2}\|u-u_h\|_{L^2(T)}+h_T^{1/2}|u|_{H^2(T)}).
 \]
 After having noted that $h_T^{\alpha_j}\leq r_{j,T}^{\alpha_j}$ if $r_{j,T}>0$, and that the semi-norms of $V^{2,2}_{\alpha_j}(\Omega_R^j)$ and $H^2(\Omega_R^j)$ coincide if $\alpha_j=0$, we can sum up the previous two inequalities and arrive similar to \eqref{eq:conv} at
 \[
   \|\partial_{n} e_h\|_{L^2(\Gamma)}\le c(\|\sigma^{-3/2}(u-u_h)\|_{L^2(\Omega)}+h^{\min\{1,-1+2\bar{\lambda}-2\epsilon\}}\|u\|_{V^{2,2}_{\vec{\alpha}}(\Omega)}).
 \]
 The assertion in case of non-convex domains is finally a consequence of
 Theorem~\ref{thm:weighted_estimate_nonconvex} and Lemma~\ref{lem:V2p_reg}.
\end{proof}

A second approach to approximate the surface flux is given by the concept of a discrete variational normal derivative.
This has several applications in optimal boundary control, see Section \ref{sec:control},
or for the approximation of Steklov-Poincar\'e operators used for instance 
in domain decomposition techniques \cite{AL85,QV99,XZ97}.
For a given function $u_h\in V_{0h}$ solving \eqref{eq:fe_formulation}, we define its discrete variational normal derivative as the object $\partial_n^h u_h\in V_h^\partial:=\operatorname{Tr}(V_h)$ (the trace space of $V_h$) fulfilling
\begin{equation}\label{eq:def_var_normal}
 (\partial_n^h u_h, w_h)_{L^2(\Gamma)} = (\nabla u_h,\nabla w_h)_{L^2(\Omega)} - (f, w_h)_{L^2(\Omega)}\qquad \forall w_h\in V_h.
\end{equation}
Note that the normal derivative $\partial_n u$ of $u\in H^1_0(\Omega)$ solving \eqref{eq:weak_form_1} fulfills due to Green's identity
\[
	(\partial_n u, w)_{L^2(\Gamma)} = (\nabla u,\nabla w)_{L^2(\Omega)} - (f, w)_{L^2(\Omega)}\qquad \forall w\in H^1(\Omega),
\]
such that
\begin{equation}\label{eq:normalerror}
	(\partial_n u-\partial_n^h u_h, w_h)_{L^2(\Gamma)} = (\nabla (u-u_h),\nabla w_h)_{L^2(\Omega)}\qquad \forall w_h\in V_h.
\end{equation}
Using the previous expression and the weighted estimates from Section \ref{sec:weighted_estimate}, we show error estimates for the discrete variational normal derivative in the following.
\begin{theorem}\label{thm:estimate_var_normal_deriv}
 Let $u\in H^1_0(\Omega)$ denote the solution of \eqref{eq:weak_form_1}. Assume further that $f\in C^{0,\sigma}(\overline\Omega)$, $\sigma\in (0,1)$, if $\Omega$ is convex, and that
 $f\in L^2(\Omega)$ if $\Omega$ is non-convex. Provided that the family of meshes $\{\mathcal T_h\}_{h>0}$ satisfy \eqref{eq:ref_cond}, 
 the solution $\partial_n^h u_h$ of \eqref{eq:def_var_normal} fulfills for any $\varepsilon>0$ the error estimate
 \[
  \|\partial_n u - \partial_n^h u_h\|_{L^2(\Gamma)} \le c h^{\min\{2,-1+2\overline\lambda-\varepsilon\}}|\ln h|^{3/2}\times\begin{cases}
  \|f\|_{C^{0,\sigma}(\overline\Omega)}, &\mbox{if}\ \Omega\ \mbox{is convex},\\
  \|f\|_{L^2(\Omega)}, &\mbox{if}\ \Omega\ \mbox{is non-convex},
  \end{cases}
 \]
 where $\overline\lambda :=\min_{j\in\C}\lambda_j$.
\end{theorem}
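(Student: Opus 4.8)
The plan is to reduce the estimate for $\partial_n u - \partial_n^h u_h$ to the weighted $L^2(\Omega)$-estimates of Section~\ref{sec:weighted_estimate}, exactly as was done for the classical approximation in Theorem~\ref{thm:estimate_normal_deriv}, but now exploiting the variational characterization \eqref{eq:normalerror} rather than an elementwise trace argument. The starting point is the identity
\[
  (\partial_n u - \partial_n^h u_h, w_h)_{L^2(\Gamma)} = (\nabla(u-u_h),\nabla w_h)_{L^2(\Omega)}\qquad\forall w_h\in V_h,
\]
together with the fact that $\partial_n^h u_h\in V_h^\partial=\operatorname{Tr}(V_h)$. The natural way to turn this into an $L^2(\Gamma)$-bound is to pick $w_h$ as (the solution of) a discrete problem whose trace is a suitable approximation of $\partial_n u - \partial_n^h u_h$, or equivalently to interpret the left-hand side as a dual pairing over the trace space. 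First I would write
\[
  \|\partial_n u - \partial_n^h u_h\|_{L^2(\Gamma)} \le \|\partial_n u - R_h \partial_n u\|_{L^2(\Gamma)} + \|R_h\partial_n u - \partial_n^h u_h\|_{L^2(\Gamma)},
\]
where $R_h$ is the $L^2(\Gamma)$-projection onto $V_h^\partial$; the first term is a pure approximation error for $\partial_n u$ on the boundary, while the second is a fully discrete quantity lying in $V_h^\partial$ and can be tested against itself.

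For the discrete term I would use a discrete inf-sup / norm equivalence on the trace space: since $R_h\partial_n u - \partial_n^h u_h\in V_h^\partial$, its $L^2(\Gamma)$-norm is controlled by testing \eqref{eq:normalerror} against a cleverly chosen $w_h\in V_h$. The standard device here is a discrete harmonic (or discrete Dirichlet) extension of the boundary datum, so that $w_h$ has trace equal to $R_h\partial_n u - \partial_n^h u_h$ while $\|\nabla w_h\|_{L^2(\Omega)}$ is controlled in a weighted norm. Because the boundary-concentrated mesh has elements of size $h^2$ at $\Gamma$, an inverse inequality relates the $H^1(\Omega)$-seminorm of such an extension to the $L^2(\Gamma)$-norm of its trace with the right powers of $h$ and of the weight $\sigma$; concretely one expects $\|\sigma^{1/2}\nabla w_h\|_{L^2(\Omega)} \le c\|w_h\|_{L^2(\Gamma)}$ up to logarithmic factors. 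Pairing this with \eqref{eq:normalerror} yields
\[
  \|R_h\partial_n u - \partial_n^h u_h\|_{L^2(\Gamma)} \le c\,\|\sigma^{-1/2}\nabla(u-u_h)\|_{L^2(\Omega)},
\]
and the weighted gradient error on the right is precisely of the type controlled by the arguments behind Theorem~\ref{thm:weighted_estimate} and Theorem~\ref{thm:weighted_estimate_nonconvex} (via the local energy estimate \eqref{eq:local_h1} summed dyadically against \eqref{eq:relation_sigma_dJ}). The approximation term $\|\partial_n u - R_h\partial_n u\|_{L^2(\Gamma)}$ is handled by the weighted $V^{2,\infty}$- (convex) or $V^{2,2}$- (non-convex) regularity of $u$ from Lemmas~\ref{lem:V2inf_reg} and \ref{lem:V2p_reg}, giving the same rate $h^{\min\{2,-1+2\overline\lambda-\varepsilon\}}$ as in Theorem~\ref{thm:estimate_normal_deriv}, since $\partial_n u$ has the same boundary regularity there.

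The main obstacle I expect is the construction and estimation of the discrete extension $w_h$ with the correct weighted stability on the graded mesh: one must ensure that the trace operator and its discrete inverse behave with the sharp powers of $\sigma$ near the boundary, where the mesh is refined to scale $h^2$, and that the logarithmic factors accumulated in \eqref{eq:relation_sigma_dJ}, \eqref{eq:mesh_size_dJ} and in the weighted regularity estimates combine to give exactly $|\ln h|^{3/2}$ and no worse. A subtle point is that the exponent in the statement is $-1+2\overline\lambda-\varepsilon$ (with a single $\varepsilon$) rather than $-1+2\overline\lambda-2\varepsilon$ as in Theorem~\ref{thm:estimate_normal_deriv}, which suggests the variational formulation gains a small amount because it avoids one elementwise trace/inverse step that costs an extra power of $r_j^{\alpha_j}$; I would track the corner weights $\alpha_j=\max\{0,1-\lambda_j+\varepsilon\}$ carefully through the pairing \eqref{eq:normalerror} to recover this slightly better exponent, and I would treat the convex and non-convex cases separately, invoking the $H^{3/2-\kappa}$-regularity of the dual solution and the boundary-strip splitting $J=I-2,I-1,I$ exactly as in the proof of Theorem~\ref{thm:weighted_estimate_nonconvex} for the outer dyadic layers.
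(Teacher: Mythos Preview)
Your overall architecture---split into a boundary approximation error for $\partial_n u$ plus a discrete term tested via \eqref{eq:normalerror}---is exactly what the paper does (with a Cl\'ement operator in place of your $L^2(\Gamma)$-projection $R_h$). The gap is in the estimate of the discrete term.

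You propose to choose $w_h$ as a discrete harmonic extension with the weighted stability $\|\sigma^{1/2}\nabla w_h\|_{L^2(\Omega)} \le c\|w_h\|_{L^2(\Gamma)}$, which would leave you with $\|\sigma^{-1/2}\nabla(u-u_h)\|_{L^2(\Omega)}$ to bound. But this global weighted gradient norm is \emph{not} of order $h^2$. Running the dyadic argument you suggest, namely inserting the local energy estimate \eqref{eq:local_h1_2} into $\sum_J d_J^{-1}\|\nabla(u-u_h)\|_{L^2(\Omega_J)}^2$, gives
\[
  \|\sigma^{-1/2}\nabla(u-u_h)\|_{L^2(\Omega)} \le c\bigl(h\,\|\nabla^2 u\|_{L^2(\Omega)} + \|\sigma^{-3/2}(u-u_h)\|_{L^2(\Omega)}\bigr),
\]
and the first term is only $O(h)$: on the interior layers $\Omega_J$ with $d_J\sim 1$ the mesh is coarse ($h_T\sim h$), so the energy error there is genuinely $O(h)$, and the weight $\sigma^{-1/2}\sim 1$ does nothing to suppress it. You would lose a full order.

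The paper avoids this by choosing $w_h=\tilde B_h\varphi_h$, the \emph{zero extension} supported only in the boundary strip $S_h$. Then the pairing in \eqref{eq:normalerror} reduces to $(\nabla(u-u_h),\nabla\tilde B_h\varphi_h)_{L^2(S_h)}$, and the unweighted stability $\|\nabla\tilde B_h\varphi_h\|_{L^2(S_h)}\le ch^{-1}\|\varphi_h\|_{L^2(\Gamma)}$ suffices. One is left with $h^{-1}\|\nabla(u-u_h)\|_{L^2(S_h)}$, which the paper converts---by inserting $I_h u$ and applying an inverse inequality on the $h^2$-sized elements of $S_h$---into $h^2\|\sigma^{-1/2}\nabla^2 u\|_{L^2(\Omega)}+\|\sigma^{-3/2}(u-u_h)\|_{L^2(\Omega)}$. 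The key is that on $S_h$ one has $h\sim h^2\sigma^{-1/2}$, so the interpolation contribution picks up the extra factor of $h$ that your global approach misses. No weighted extension theorem is needed, and no separate dyadic summation of the gradient error is required.

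Finally, your reading of the exponent $-1+2\overline\lambda-\varepsilon$ versus $-1+2\overline\lambda-2\varepsilon$ is a red herring: since $\varepsilon>0$ is arbitrary the two formulations are equivalent, and the proofs of Theorems~\ref{thm:estimate_normal_deriv} and \ref{thm:estimate_var_normal_deriv} use identical corner weights $\alpha_j$. There is no genuine gain from the variational formulation in this respect.
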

\begin{proof}
We start with introducing a Cl\'ement type interpolation operator: Let $\varphi_i$ with $i=1,\ldots,N_h:=\dim(V_h^\partial)$ denote the nodal basis functions of $V_h^\partial$. The interpolation operator $C_h:L^1(\Gamma)\rightarrow V_h^\partial$ is given by
\[
	C_hv=\sum_{j=1}^{N_h} v_i \varphi_i \quad\text{with }v_i:= \frac{1}{\left|\supp \varphi_i\right|}\int_{\supp \varphi_i} v\, \mathrm ds.
\]
Let $S_E$ denote the set of elements in $\partial \mathcal{T}_h$ sharing at least one vertex with $E$. A short calculation shows that
\[
	\|C_hv\|_{L^2(E)}\le c \|v\|_{L^2(S_E)}\quad\text{and}\quad C_h p_0 = p_0 \text{ in } E\quad \forall p_0\in\mathcal{P}_0(S_E).
\]

We now turn our attention to the proof of the assertion. Introducing $C_h\partial_n u$ as an intermediate function, we immediately obtain
\[
\|\partial_n u - \partial_n^h u_h\|^2_{L^2(\Gamma)}= (\partial_n u - \partial_n^hu_h,\partial_n u -C_h\partial_n u)_{L^2(\Gamma)}+(\partial_n u - \partial_n^hu_h,C_h(\partial_n u - \partial_n^hu_h))_{L^2(\Gamma)}.
\]
Using the Cauchy-Schwartz inequality and the stability of $C_h$ in $L^2(\Gamma)$, we can continue with
\begin{equation}\label{eq:startnormal}
\|\partial_n u - \partial_n^h u_h\|_{L^2(\Gamma)}\leq \|\partial_n u -C_h\partial_n u\|_{L^2(\Gamma)}+\sup_{\genfrac{}{}{0pt}{}{\varphi_h\in V_h^\partial}{\|\varphi_h\|_{L^2(\Gamma)=1}}}\left|(\partial_n u - \partial_n^hu_h,\varphi_h)_{L^2(\Gamma)}\right|.
\end{equation}
Subsequently, we distinguish similar to the proof of Theorem \ref{thm:estimate_normal_deriv} between convex and non-convex domains. We first consider the convex case, and start with showing an estimate for the interpolation error. For $E\in\partial\mathcal T_h$, let $T$ be the element in $\mathcal T_h$ with $\bar T\cap \Gamma=\bar E$. Moreover, we define $D_E$ as the set of all elements in $\mathcal{T}_h$ sharing at least one vertex with $E$. The reference configurations $S_{\hat E}$ and $D_{\hat E}$ are given by $S_{\hat E}=F_T^{-1}(S_E)$ and $D_{\hat E}=F_T^{-1}(D_E)$, see the proof of Theorem \ref{thm:estimate_normal_deriv} for the definition of the mapping $F_T$. If the element $E$ has a positive distance to each corner, we introduce $\partial_{n}p\in \mathcal{P}_0(S_E)$ as an intermediate function. Here, $p$ denotes an arbitrary polynomial in $\mathcal P_1(D_E)$.
Afterwards, we employ the aforementioned stability of $C_h$, a standard trace theorem on the reference configuration, and the Bramble-Hilbert Lemma. This yields
\begin{align}
\|\partial_n u - C_h\partial_n u\|_{L^2(E)}
&\le c\|\partial_{n} u - \partial_{n} p\|_{L^2(S_{E})}
 \le c h_T^{-1/2}\|\partial_{\hat n} \hat u - \partial_{\hat n} \hat p\|_{L^2(S_{\hat E})}\notag\\
&\le c h_T^{-1/2}\|\hat u - \hat p\|_{H^{2}(D_{\hat E})} \le c h_T^{-1/2}|\hat u|_{H^{2}(D_{\hat E})}.\label{eq:intelement}
\end{align}
If the element $E$ has contact to a corner, we similarly obtain without introducing an intermediate function
\begin{align}
\|\partial_n u - C_h\partial_n u\|_{L^2(E)}
&\le c\|\partial_{n} u\|_{L^2(S_{E})}
\le c h_T^{-1/2}\|\partial_{\hat n} \hat u\|_{L^2(S_{\hat E})}\notag\\
&\le c h_T^{-1/2}\|\hat u\|_{H^{2}(D_{\hat E})} \le c h_T^{-1/2}|\hat u|_{H^{2}(D_{\hat E})}.\label{eq:intelement2}
\end{align}
In the last step, we used that the zero function is a linear interpolant of $\hat u$ on $D_{\hat E}$ because of the homogeneous boundary conditions of $\hat u$ on $S_{\hat E}$ (which contains a kink due to the convex corner). Collecting the results from \eqref{eq:intelement} and \eqref{eq:intelement2}, after having transformed everything to the world configuration, yields
\begin{equation}\label{eq:normalint}
\|\partial_n u - C_h\partial_n u\|_{L^2(\Gamma)}
\le c\left(\sum_{\stackrel{T\in\mathcal{T}_h}{\rho_T=0}}h_T\| \nabla^2u\|_{L^2(D_{E})}^2\right)^{1/2}
\le c h^2\|\sigma^{-1/2} \nabla^2u\|_{L^2(\Omega)},
\end{equation}
where we used $h_T\sim h^2$, which holds for elements $T$ in the direct vicinity of the boundary, and the definition of the regularized distance function $\sigma$. Next, we show an estimate for the second term in \eqref{eq:startnormal}. To this end, let $S_h$ denote the strip of elements at the boundary. Furthermore, we introduce the zero-extension $\tilde B_h\colon V_h^\partial\to V_h$ defined in such a way
that $\tilde B_h v_h$ vanishes in the interior nodes for arbitrary $v_h\in
V_h^\partial$, and hence it is always supported in the boundary strip $S_h$. This extension operator admits the stability estimate 
\begin{equation}\label{eq:stability_tilde_Bh}
\|\nabla \tilde B_hv_h\|_{L^2(S_h)} \le c h^{-1}\|v_h\|_{L^2(\Gamma)} \qquad\forall v_h\in V_h^\partial,
\end{equation}
which is proved in \cite[Lemma 3.3]{MRV13}. Using \eqref{eq:normalerror}, \eqref{eq:stability_tilde_Bh}, and $\|\varphi_h\|_{L^2(\Gamma)}=1$, we conclude
\begin{align*}
\left|(\partial_n u - \partial_n^hu_h,\varphi_h)_{L^2(\Gamma)}\right|
&=\left|(\nabla(u-u_h),\nabla \tilde B_h\varphi_h)_{L^2(S_h)}\right|
\le\|\nabla(u-u_h)\|_{L^2(S_h)} \|\nabla \tilde B_h \varphi_h\|_{L^2(S_h)}\notag\\
&\le c h^{-1}\|\nabla(u-u_h)\|_{L^2(S_h)}.
\end{align*}
Introducing the Lagrange interpolant $I_hu$ as an intermediate function, in combination with an inverse inequality, yields
\begin{align}
\big|(\partial_n u - \partial_n^hu_h&,\varphi_h)_{L^2(\Gamma)}\big|\notag\\
&\le c \left(h^{-1} \|\nabla(u-I_h u)\|_{L^2(S_h)} + h^{-3} \|u-I_h u\|_{L^2(S_h)} + \|\sigma^{-3/2}(u-u_h)\|_{L^2(\Omega)}\right)\notag\\
&\le c \left(h^2\|\sigma^{-1/2} \nabla^2u\|_{L^2(\Omega)}+\|\sigma^{-3/2}(u-u_h)\|_{L^2(\Omega)}\right),\label{eq:secondnormal}
\end{align}
where we used the definition of $\sigma$ and a standard interpolation error estimate. The assertion in the convex case now follows from \eqref{eq:startnormal}, \eqref{eq:normalint}, \eqref{eq:secondnormal}, Theorem \ref{thm:weighted_estimate}, \eqref{eq:weightedreg} and \eqref{eq:estimatesigma}.

In the non-convex case, we have to slightly modify the proof due to the lack of regularity. We again consider the interpolation error in \eqref{eq:startnormal} at first. For every $E\in\partial \mathcal T_h$ such that $D_E$ touches a corner $\boldsymbol c_j$, $j\in\mathcal C_{non}$, we obtain
analogously to \eqref{eq:intelement2} for $q> 4/3$
\[
\|\partial_n u - C_h\partial_n u\|_{L^2(E)}
\le c\|\partial_{n} u\|_{L^2(S_{E})}
\le c h_T^{-1/2}\|\partial_{\hat n} \hat u\|_{L^2(S_{\hat E})}\le c h_T^{-1/2}\|\hat u\|_{W^{2,q}(D_{\hat E})}.
\]
Next, we apply the embedding $V^{2,2}_{\alpha_j}(D_{\hat E}) \hookrightarrow  W^{2,q}(D_{\hat E})$,
which holds for any $\alpha_j < 1/2$ if $q$ is sufficiently close to $4/3$.
In the present situation we choose $\alpha_j:=\max\{0,1-\lambda_j+\varepsilon\}$, $j\in\mathcal{C}$,
with sufficiently small $\varepsilon >0$.
Together with the transformation formula used already in \eqref{eq:V22_trafo} we deduce
\[
\|\partial_n u - C_h\partial_n u\|_{L^2(E)}\le ch_T^{-1/2}\|\hat u\|_{W^{2,q}(D_{\hat E})} 
\le ch_T^{-1/2}\|\hat u\|_{V^{2,2}_{\alpha_j}(D_{\hat E})} 
\le c h_T^{1/2-\alpha_j} \|u\|_{V^{2,2}_{\alpha_j}(D_E)}.
\]
If $D_E$ has a positive distance to the corner, it is possible to use \eqref{eq:intelement} again. After having noted that $h_T^{\alpha_j}\leq r_{j,T}^{\alpha_j}$ if $r_{j,T}>0$, and that the semi-norms of $V^{2,2}_{\alpha_j}(\Omega_R^j)$ and $H^2(\Omega_R^j)$ coincide if $\alpha_j=0$, we can sum up the previous results analogously to \eqref{eq:normalint} to conclude
\begin{equation}\label{eq:intnonc}
\|\partial_n u - C_h\partial_n u\|_{L^2(\Gamma)}\le c h^{\min\{1,-1+2\bar{\lambda}-2\epsilon\}}\|u\|_{V^{2,2}_{\vec{\alpha}}(\Omega)}.
\end{equation}
As for \eqref{eq:secondnormal}, but now using the interpolation error estimate \eqref{eq:int_error_Sh} if $T$ touches a non-convex corner, we deduce
\begin{equation}\label{eq:normalnonc}
\left|(\partial_n u - \partial_n^hu_h,\varphi_h)_{L^2(\Gamma)}\right|\le c \left(h^{\min\{1,-1+2\bar{\lambda}-2\epsilon\}}\|u\|_{V^{2,2}_{\vec{\alpha}}(\Omega)}+\|\sigma^{-3/2}(u-u_h)\|_{L^2(\Omega)}\right).
\end{equation}
The assertion for non-convex domains is now a consequence of \eqref{eq:startnormal}, \eqref{eq:intnonc}, \eqref{eq:normalnonc}, Theorem \ref{thm:weighted_estimate_nonconvex} and Lemma \ref{lem:V2p_reg}.
\end{proof}

\subsection{Numerical experiments}\label{sec:experiments}

In this section we will show that the error estimate derived in Theorem \ref{thm:estimate_normal_deriv} is sharp.
Therefore, we construct the following benchmark problem.
We introduce a family of computational domains 
\[
 \Omega_\omega:= (-1,1)^2 \cap\{(r\cos\varphi, r\sin\varphi)\colon r\in (0,\infty),\ \varphi\in (0,\omega)\},\qquad\omega\in [\pi/2,2\pi)
\]
with $r$ and $\varphi$ denoting the polar coordinates located at the origin.
For these domains the corner with the largest opening angle $\omega$ is the origin, and
the corresponding singular exponent is $\bar\lambda:= \pi/\omega$.

The problem we consider in the numerical experiment is the problem whose exact solution is 
\[
 u(x_1,x_2) := r^{\bar\lambda}(x_1,x_2) \sin(\bar\lambda\varphi(x_1,x_2))(1-x_1^2)(1-x_2^2).
\]
With a simple computation we obtain the corresponding right-hand side $f$. The homogeneous Dirichlet boundary conditions are fulfilled by construction.

The meshes used for the computation are constructed in the following way. 
We start with a coarse initial mesh consisting of $2$ ($\omega=\pi/2$),
$3$ ($\omega=2\pi/3,\,3\pi/4$),
$5$ ($\omega=5\pi/4$), $6$ ($\omega=3\pi/2$) or $7$ ($\omega=7\pi/4$) elements.
The fine meshes are obtained by $N$ global steps of a newest-vertex bisection strategy \cite{Bae91}.
Afterwards, this strategy is successively applied to all elements violating the refinement condition \eqref{eq:ref_cond}.  
Two meshes with $h=2^{-3}$ for the domains $\Omega_{\pi/2}$ and $\Omega_{3\pi/4}$ are illustrated in Figure~\ref{fig:meshes}.
\begin{figure}
\begin{center}
 \subfloat[Mesh for $\Omega_{\pi/2}$]{\includegraphics[height=3.5cm]{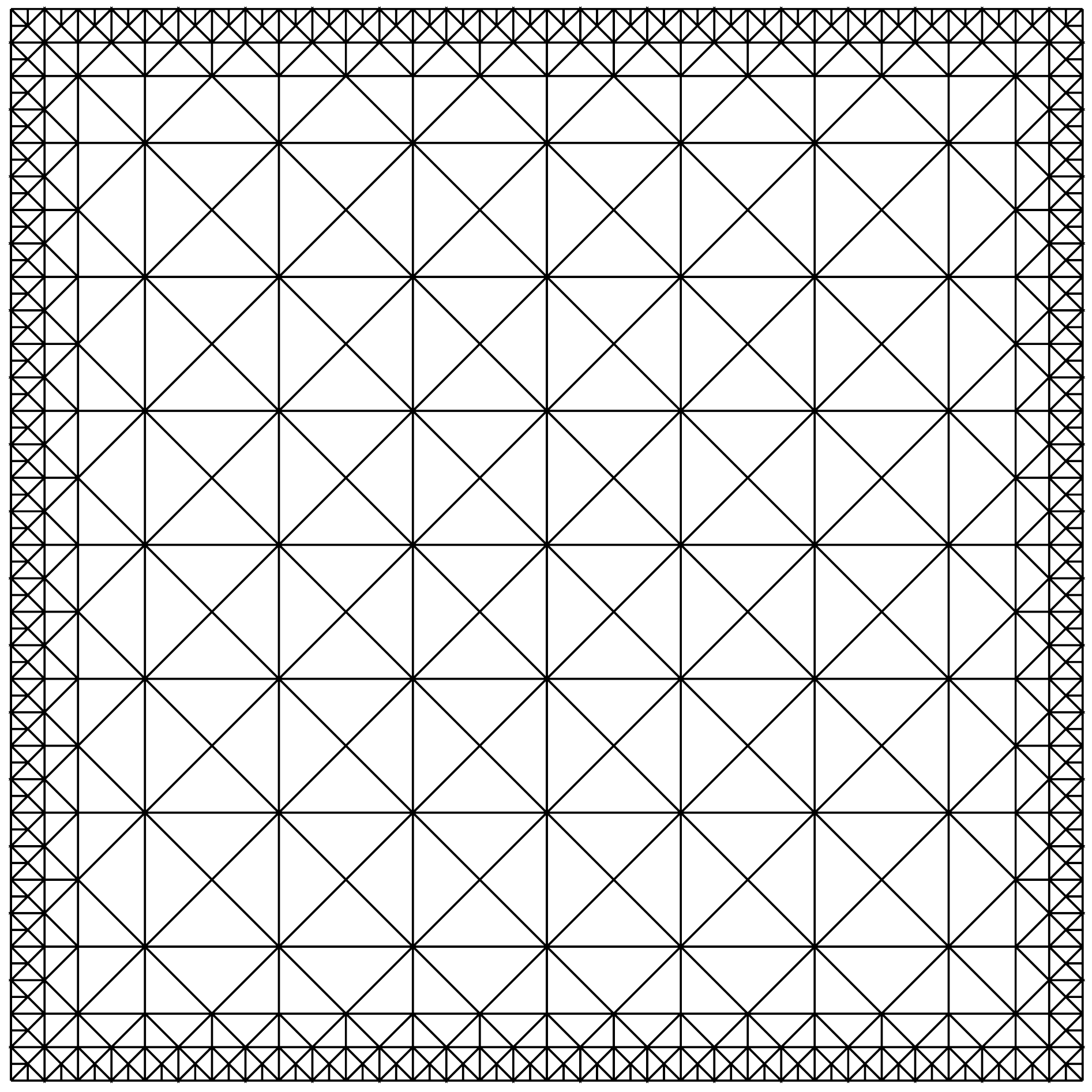}}\quad
 \subfloat[Mesh for $\Omega_{3\pi/4}$]{\includegraphics[height=3.5cm]{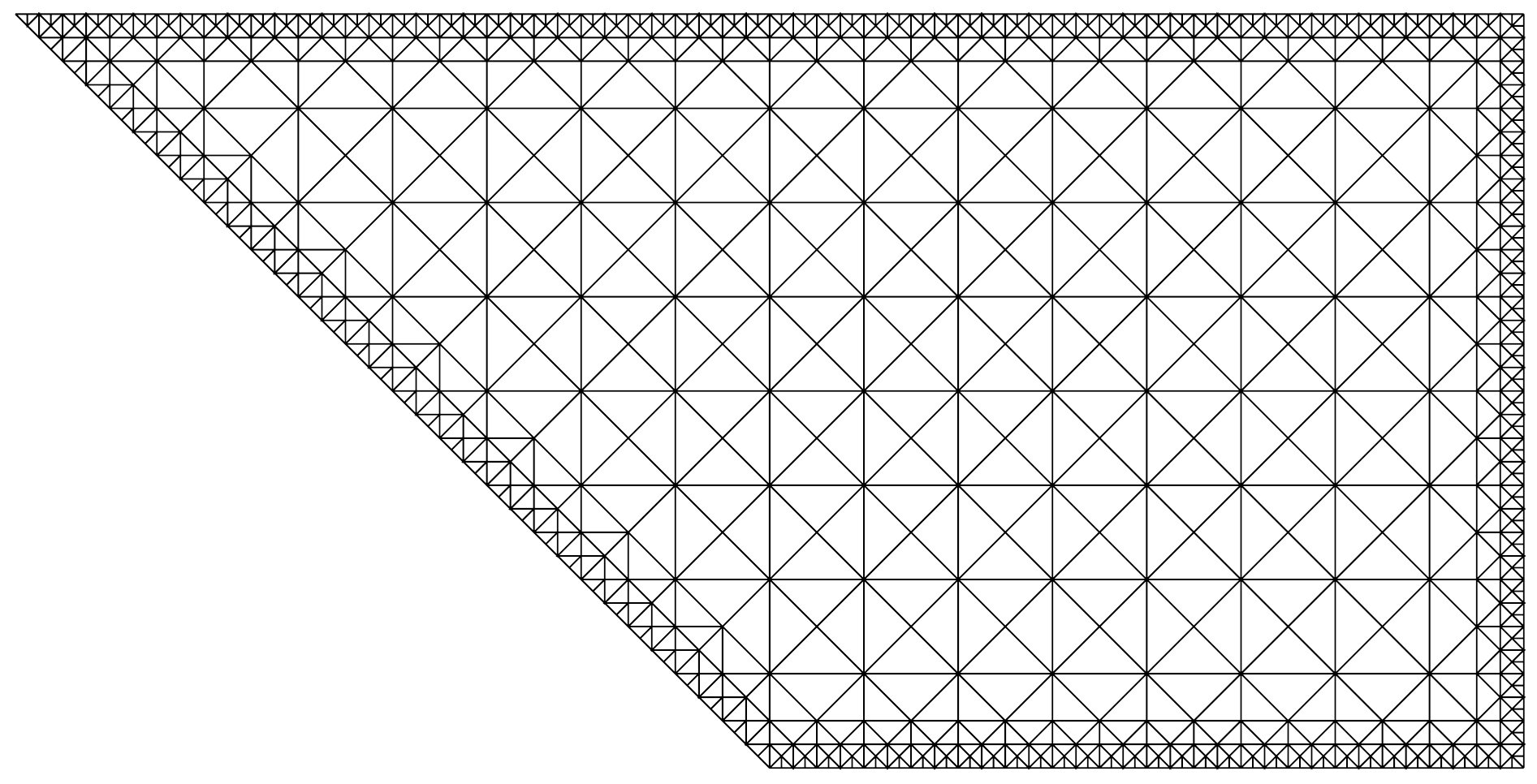}}
 \end{center}
 \caption{Meshes satisfying the refinement criterion \eqref{eq:ref_cond}.}
 \label{fig:meshes}
\end{figure}

After computing the finite element approximation $u_h$ from \eqref{eq:fe_formulation} the error term
$\|\partial_n (u-u_h)\|_{L^2(\Gamma)}$ is evaluated.
The results of our computations are summarized in Tables \ref{tab:experiment3} and \ref{tab:experiment4}.
In all cases we observe that the convergence rates of
Theorem \ref{thm:estimate_normal_deriv} are confirmed. 
The convergence rate $2$ is observed for the angles $90^\circ$ and $120^\circ$,
but not for $135^\circ$. 
This confirms the fact that $120^\circ$ is the limiting case.

\begin{table}[htb]
\begin{center}
\begin{tabular}{rrrr}
\toprule
\multicolumn{1}{l}{} & \multicolumn{3}{c}{$\|\partial_n(u-u_h)\|_{L^2(\Gamma)}$\ (EOC)} \\ \cmidrule{2-4}
\multicolumn{1}{r}{$h$} & \multicolumn{1}{c}{$\omega=90^\circ$}  & \multicolumn{1}{c}{$\omega=120^\circ$} 
& \multicolumn{1}{c}{$\omega=135^\circ$} 
\\ \midrule
 $2^{-4}$ & 5.09e-1 (1.86)  & 6.71e-1 (1.25) & 5.32e-1 (1.63)  \\ 
 $2^{-5}$ & 1.30e-1 (1.97)  & 2.00e-1 (1.75) & 1.41e-1 (1.91)  \\ 
 $2^{-6}$ & 3.27e-2 (1.99)  & 5.37e-2 (1.89) & 3.76e-2 (1.91)  \\ 
 $2^{-7}$ & 8.20e-3 (2.00)  & 1.38e-2 (1.96) & 1.02e-2 (1.89)  \\ 
 $2^{-8}$ & 2.05e-3 (2.00)  & 3.50e-3 (1.98) & 2.82e-3 (1.85)  \\ 
 $2^{-9}$ & 5.12e-4 (2.00)  & 8.87e-4 (1.98) & 8.01e-4 (1.81)  \\ 
 $2^{-10}$ & 1.28e-4 (2.00)  & 2.24e-4 (1.98) & 2.33e-4 (1.78) \\
 $2^{-11}$ & 3.20e-5 (2.00)  & 5.68e-5 (1.98) & 6.95e-5 (1.75) \\
 \midrule
 Expected: & (2.00) & (2.00) & (1.67) \\
\bottomrule
\end{tabular}
\end{center}
\caption{Experimental convergence rates for $\|\partial_n(u-u_h)\|_{L^2(\Gamma)}$ for convex domains.}
\label{tab:experiment3}
\end{table}

\begin{table}[htb]
\begin{center}
\begin{tabular}{rrrr}
\toprule
\multicolumn{1}{l}{} & \multicolumn{3}{c}{$\|\partial_n(u-u_h)\|_{L^2(\Gamma)}$\ (EOC)} \\ \cmidrule{2-4}
\multicolumn{1}{r}{$h$}  & \multicolumn{1}{c}{$\omega=225^\circ$}  & \multicolumn{1}{c}{$\omega=270^\circ$} 
& \multicolumn{1}{c}{$\omega=315^\circ$} 
\\ \midrule
 $2^{-4}$  & 6.36e-1 (1.47) & 7.11e-1 (1.36) & 9.75e-1 (0.98) \\ 
 $2^{-5}$  & 2.88e-1 (1.14) & 4.08e-1 (0.80) & 7.50e-1 (0.38) \\ 
 $2^{-6}$   & 1.73e-1 (0.73) & 3.08e-1 (0.41) & 6.73e-1 (0.16) \\ 
 $2^{-7}$  & 1.12e-1 (0.62) & 2.43e-1 (0.34) & 6.15e-1 (0.13) \\ 
 $2^{-8}$  & 7.40e-2 (0.60) & 1.93e-1 (0.33) & 5.63e-1 (0.13) \\ 
 $2^{-9}$  & 4.88e-2 (0.60) & 1.53e-1 (0.33) & 5.17e-1 (0.12) \\ 
 $2^{-10}$ & 3.22e-2 (0.60) & 1.22e-1 (0.33) & 4.76e-1 (0.12) \\
 $2^{-11}$ & 2.13e-2 (0.60) & 9.65e-2 (0.33) & 4.40e-1 (0.12) \\
 \midrule
 Expected: & (0.60) & (0.33) & (0.14)\\
\bottomrule
\end{tabular}
\end{center}
\caption{Experimental convergence rates for $\|\partial_n(u-u_h)\|_{L^2(\Gamma)}$ for non-convex domains.}
\label{tab:experiment4}
\end{table}

\section{Discretization of Dirichlet boundary control problems}\label{sec:control}

\subsection{Error estimates}
An application of the results of Theorem \ref{thm:estimate_var_normal_deriv}
are error estimates for the finite element approximation of Dirichlet boundary
control problems. 
As a model problem we investigate
\begin{equation}\label{eq:control}
 \begin{aligned}
  J(y,u) := \frac12\|y&-y_d\|_{L^2(\Omega)}^2 + \frac\alpha2\|u\|^2_{L^2(\Gamma)}\to\min!\\[.3em]
  \mbox{s.\,t.}\quad -\Delta y &= 0\quad\mbox{in}\ \Omega,\\
  y&=u\quad\mbox{on}\ \Gamma,
 \end{aligned}
\end{equation}
where $\alpha>0$ and a desired state $y_d\in L^2(\Omega)$ are given.
It is known \cite{MRV13} that the weak formulation of the system
\begin{equation}\label{eq:optimality}
\left\lbrace
\begin{aligned}
 -\Delta y&=0 \quad& -\Delta p &= y-y_d &\quad&\mbox{in}\ \Omega,\\
 y&=u & p&= 0 && \mbox{on}\ \Gamma,\\ 
 && \alpha u - \partial_n p &=0 &&\mbox{on}\ \Gamma,
 \end{aligned}
 \right.
\end{equation}
forms a necessary and sufficient optimality condition. For non-convex domains
the state equation has to be understood in the very weak sense as $y\notin
H^1(\Omega)$ in general.
In order to derive error estimates for the numerical approximation of $(y,u,p)$ we collect some regularity results in the next lemma.
\begin{lemma}\label{lem:regularity}
Let $s < \min\{2,\bar\lambda\}$ with $\bar \lambda:=\min_{j\in \mathcal{C}}\lambda_j$, and let $\vec \gamma \in \mathbb{R}^d$ statisfying $\gamma_j>\max\{0,2-\lambda_j\}$ for $j\in\mathcal C$. Then for $y_d\in H^1(\Omega)$, there holds
\[
y\in H^{s}(\Omega)\cap V^{2,2}_{\vec\gamma}(\Omega)\quad \text{and}\quad u\in H^{s-1/2}(\Gamma).
\]
\end{lemma}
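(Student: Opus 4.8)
The plan is to exploit the coupling in the optimality system \eqref{eq:optimality} through the relation $u=\alpha^{-1}\partial_n p$, and to run a bootstrap argument that alternately improves the regularity of the adjoint state $p$, of the control $u$, and of the state $y$. Existence of a solution $(y,u,p)$ with $y\in L^2(\Omega)$, $u\in L^2(\Gamma)$ and $p\in H^1_0(\Omega)$ being known \cite{MRV13}, I would start from the a priori bound $y\in L^2(\Omega)$ and iterate until the asserted exponents are reached. The iteration is monotone and saturates at $s=\min\{2,\bar\lambda\}$, so finitely many passes suffice.

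First I would treat the adjoint problem $-\Delta p=y-y_d$, $p|_\Gamma=0$. Since $y-y_d\in L^2(\Omega)\subset V^{0,2}_{\vec\beta}(\Omega)$ for every $\beta_j\ge 0$, Lemma~\ref{lem:V2p_reg} applied with $\beta_j=\max\{0,1-\lambda_j+\varepsilon\}\in(1-\lambda_j,1)$ gives $p\in V^{2,2}_{\vec\beta}(\Omega)$, while the classical shift theorem for corner domains \cite{Gri85} provides the plain Sobolev bound $p\in H^{\min\{2,1+\bar\lambda\}-\varepsilon}(\Omega)$, the corner exponents $\lambda_j$ limiting the order. Because $\bar\lambda>1/2$ for every admissible angle $\omega_j<2\pi$, one has $\min\{2,1+\bar\lambda\}>3/2$, so the normal trace is well defined and $u=\alpha^{-1}\partial_n p$ inherits boundary regularity of order $\min\{2,1+\bar\lambda\}-3/2-\varepsilon$; tracking the leading corner term $p\sim r_j^{\lambda_j}$ shows $u\sim r_j^{\lambda_j-1}$ near $\boldsymbol c_j$, consistent with $u\in H^{s-1/2}(\Gamma)$ for $s<\bar\lambda$. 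In the non-convex case $y-y_d$ stays in $L^2(\Omega)$ and this single pass already gives the claimed range. In the convex case a second pass is required: the first pass produces $y\in H^1(\Omega)$, so that $y-y_d\in H^1(\Omega)$ (here the hypothesis $y_d\in H^1(\Omega)$ enters and is exactly what caps the order at $2$), the adjoint improves to $p\in H^{\min\{3,1+\bar\lambda\}-\varepsilon}(\Omega)$, and $u$ reaches the full range $s<\min\{2,\bar\lambda\}$.

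Next I would recover the state from $-\Delta y=0$, $y|_\Gamma=u$. For the Sobolev part I would use that the harmonic extension of data in $H^{s-1/2}(\Gamma)$ lies in $H^s(\Omega)$ as long as $s<1+\bar\lambda$; together with the previous step this yields $y\in H^s(\Omega)$ for all $s<\min\{2,\bar\lambda\}$. For the weighted part I would split off the corner singularities, writing $y=y_{\mathrm{sing}}+y_{\mathrm{reg}}$, where $y_{\mathrm{sing}}$ is a cut-off superposition of explicit homogeneous harmonic functions of degree $\lambda_j-1$ matching the leading boundary behaviour $u\sim r_j^{\lambda_j-1}$, and $y_{\mathrm{reg}}$ solves a Poisson problem with homogeneous Dirichlet data and a right-hand side supported away from the vertices; Lemma~\ref{lem:V2p_reg} applied to $y_{\mathrm{reg}}$ together with a direct weighted estimate for $y_{\mathrm{sing}}$ then gives $y\in V^{2,2}_{\vec\gamma}(\Omega)$. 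The exponent is pinned down by the scaling check $r_j^{\gamma_j}\nabla^2 y\sim r_j^{\gamma_j+\lambda_j-3}\in L^2(\Omega_R^j)$, which holds precisely for $\gamma_j>2-\lambda_j$; the floor $\gamma_j\ge 0$ accounts for the sharp convex corners $\lambda_j>2$, where $y$ is already locally in $H^2(\Omega)$, and thus reproduces the hypothesis $\gamma_j>\max\{0,2-\lambda_j\}$.

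The step I expect to be hardest is the weighted $V^{2,2}_{\vec\gamma}$-estimate for the state, precisely because the boundary datum $u$ is itself singular, of order $r_j^{\lambda_j-1}$ and hence unbounded at the non-convex corners; the lifting cannot be taken in $H^2(\Omega)$, so the singular part $y_{\mathrm{sing}}$ must be constructed to match the corner weights before Lemma~\ref{lem:V2p_reg} can be applied to the remainder. A secondary point is making the bootstrap rigorous: one must check at each pass that the corner-compatibility of $u$ needed for the harmonic extension, that is the agreement of the one-sided traces of $u$ from the two edges meeting at $\boldsymbol c_j$, is satisfied, which follows from the continuity of $\partial_n p$ away from the vertices and the vanishing/pole dichotomy recorded in Remark~\ref{remark:normalderivative}.
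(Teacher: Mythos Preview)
Your bootstrap strategy is the right one and matches the paper's approach for the $H^s$ assertions; both iterate between adjoint, control, and state to climb in regularity, and the paper in fact simply cites \cite[Lemma~2.2]{AMPR16} for $y\in H^s(\Omega)$ and $u\in H^{s-1/2}(\Gamma)$.

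The gap is in your weighted $V^{2,2}_{\vec\gamma}$ argument for the state. After subtracting a cut-off harmonic function $y_{\mathrm{sing}}$ that matches only the \emph{leading} boundary behaviour $c_j r_j^{\lambda_j-1}$ of $u$ near each corner, the remainder $y_{\mathrm{reg}}$ does \emph{not} have homogeneous Dirichlet data: its trace is $u$ minus the cut-off singular terms, which is smoother at the vertices but nonzero on $\Gamma$. Lemma~\ref{lem:V2p_reg} is stated for homogeneous boundary conditions and cannot be invoked on $y_{\mathrm{reg}}$ as written. To close the argument you would still need a lifting of the residual boundary datum into $V^{2,2}_{\vec\gamma}(\Omega)$, i.e.\ precisely an extension theorem in the weighted scale; so the decomposition does not eliminate the hard step, it only relocates it.

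The paper avoids the explicit singular-function subtraction and works entirely inside the weighted scale. From $y-y_d\in L^2(\Omega)$ and Lemma~\ref{lem:V2p_reg} it gets $p\in V^{2,2}_{\vec\beta}(\Omega)$ with $\beta_j=\max\{0,1-\lambda_j+\varepsilon\}$; then trace and extension theorems in weighted Sobolev spaces \cite[Theorems~1.31, 1.32]{Nicaise1993} transfer $u=\alpha^{-1}\partial_n p$ to the boundary and lift it back to obtain $y\in V^{1,2}_{\vec\beta}(\Omega)$. Since $y_d\in H^1(\Omega)\hookrightarrow V^{1,2}_{\vec\varepsilon}(\Omega)$, the right-hand side $y-y_d$ now lies in $V^{1,2}_{\vec\beta}(\Omega)$ and the higher-order shift theorem \cite[Theorem~3.1]{NP94} (applied with $l=1$) gives $p\in V^{3,2}_{\vec\gamma}(\Omega)$; a second trace/extension pass then yields $y\in V^{2,2}_{\vec\gamma}(\Omega)$. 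The key technical ingredients you are missing are thus the weighted trace/extension theorems and the $V^{3,2}$-regularity of $p$, which together handle the non-homogeneous, corner-singular Dirichlet datum directly without any explicit splitting.
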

\begin{proof}
	The regularity results for $y$ and $u$ in $H^s(\Omega)$ and $H^{s-1/2}(\Gamma)$, respectively, can be found in \cite[Lemma 2.2]{AMPR16}. Basically, the regularity result in weighted Sobolev spaces for the state is contained in that reference as well. However, it is not directly accessible. For that reason, we give a short proof by employing a bootstrapping argument and classical regularity results in weighted Sobolev spaces:
	After having noticed that $y-y_d$ belongs to $L^2(\Omega)$, we can
    deduce $p\in V^{2,2}_{\vec \beta}(\Omega)$ for $\vec \beta\in
    \mathbb{R}^{d}$ satisfying $\beta_j>1-\lambda_j$ and $\beta_j\ge0$ for
    $j\in\mathcal C$, see Lemma \ref{lem:V2p_reg}. Due to the
    optimality condition, $\alpha u = \partial_np$ almost everywhere on $\Gamma$, and
    trace and extension theorems in weighted Sobolev spaces from
    \cite[Theorem 1.31 and Theorem 1.32]{Nicaise1993}, we are able to show
    by classical means that $y\in V^{1,2}_{\vec\beta}(\Omega)$ (eventually by
    using a density argument). For related results and techniques, we also
    refer to \cite[Section C and Section E]{ApNiPfe2016}. Since $\beta_j$
    can always be chosen such that $\beta_j\leq \gamma_j$, we obtain from
    \cite[Theorem 3.1]{NP94}, by setting $l=1$ in this theorem, that
    $p$ belongs to $V^{3,2}_{\vec \gamma}(\Omega)$. We notice that the
    embedding $y_d\in H^{1}(\Omega)\hookrightarrow
    V^{1,2}_{\vec\varepsilon}(\Omega)$ for $\vec\varepsilon$ with
    $\varepsilon_j>0$, 
    $j\in\mathcal C$, is essential for the applicability of the theorem, see \cite[Theorem 7.1.1]{KMR97}. Similar to before, due
    to the trace and extension theorems from \cite{Nicaise1993}, we can
    finally show the assertion using \cite[Theorem 3.1]{NP94}, now by
    setting $l=0$. For the application of the theorem, it is important to
    notice that there holds $2-\lambda_j<1+\lambda_j$ as
    $\omega_j<2\pi$ for $j\in\mathcal C$ such that the range for the
    weights is non-empty.
\end{proof}

Analogous to  \cite{AMPR16, CR06, MRV13} we compute an approximation of $(y,u,p)$ obtained 
by the finite element method. Therefore, we consider a family of finite element meshes 
$\{\mathcal T_h\}_{h>0}$ refined according to \eqref{eq:ref_cond},
and seek the approximate solutions in the spaces
\begin{align*}
 V_h:=\{v_h\in C(\overline\Omega)\colon v_h|_T\in\mathcal P_1\text{ for all } T\in\mathcal T_h\}, \quad V_h^\partial:=\operatorname{Tr}(V_h),\qquad V_{0h}:=V_h\cap H^1_0(\Omega).
\end{align*}
The discrete optimality condition reads:
Find $(y_h,u_h,p_h)\in V_h\times V_h^\partial\times V_{0h}$ such that
\begin{equation}\label{eq:discrete_opt_cond}
\left\lbrace
\begin{aligned}
 y_h|_\Gamma= u_h,\quad(\nabla y_h,\nabla v_h)_{L^2(\Omega)} &= 0 &&\forall v_h\in V_{0h},\\
 (\nabla v_h,\nabla p_h)_{L^2(\Omega)} &= (y_h-y_d,v_h)_{L^2(\Omega)}&&\forall v_h\in V_{0h},\\
 (\alpha u_h - \partial_n^h p_h,w_h)_{L^2(\Gamma)} &= 0 &&\forall w_h\in V_h^\partial.
 \end{aligned}\right.
\end{equation}
Here, $\partial_n^h p_h \in V_h^\partial$ denotes
the discrete variational normal derivative of the adjoint state defined by
\begin{equation}\label{eq:discrete_normal_adjoint}
 (\partial_n^h p_h,v_h)_{L^2(\Gamma)} = (\nabla v_h,\nabla p_h)_{L^2(\Omega)} - (y_h-y_d,v_h)_{L^2(\Omega)}\quad\forall v_h\in V_h.
\end{equation}
The following basic error estimate has been shown in \cite{AMPR16}:
\begin{align}\label{eq:basic_estimate}
 &\phantom{\le c} \|u-u_h\|_{L^2(\Gamma)} + \|y-y_h\|_{L^2(\Omega)}\nonumber\\
 &\le c\left(\|u-Q_h u\|_{L^2(\Gamma)} + \|y-B_h Q_h u\|_{L^2(\Omega)} + \sup_{\psi_h\in V_h^\partial}\frac{|(\nabla p,\nabla B_h\psi_h)_{L^2(\Omega)}|}{\|\psi_h\|_{L^2(\Gamma)}}  \right).
\end{align}
The operator $B_h\colon V_h^\partial\to V_h$ denotes the discrete harmonic extension, and $Q_h\colon L^2(\Gamma)\to V_h^\partial$ the $L^2(\Gamma)$-projection. Let $R_hp\in V_{0h}$ denote the Ritz projection of $p$ defined by
\[
	(\nabla R_h p,\nabla v_h)_{L^2(\Omega)} = (\nabla p,\nabla v_h)_{L^2(\Omega)}\quad\forall v_h\in V_{0h}.
\]
Then, we obtain due to the definition of the discrete harmonic extension $B_h$ and \eqref{eq:normalerror}
\[
	(\nabla p,\nabla B_h\psi_h)_{L^2(\Omega)}=(\nabla (p-R_hp),\nabla B_h\psi_h)_{L^2(\Omega)}=(\partial_np-\partial_n^hR_hp,\psi_h)_{L^2(\Gamma)}
\]
such that the third term in \eqref{eq:basic_estimate} can be estimated by
\begin{equation}\label{eq:adjointnormal}
	\sup_{\psi_h\in V_h^\partial}\frac{|(\nabla p,\nabla B_h\psi_h)_{L^2(\Omega)}|}{\|\psi_h\|_{L^2(\Gamma)}}\le c \|\partial_np-\partial_n^hR_hp\|_{L^2(\Gamma)}.
\end{equation}
In the following, we discuss each of the different error contributions in \eqref{eq:basic_estimate} and \eqref{eq:adjointnormal}.
\begin{lemma}\label{lem:first_term}
 Assume that $y_d\in H^{1}(\Omega)$. Let $u$ be the optimal control solving \eqref{eq:optimality}.
 Then, the estimate
 \[
  \|u-Q_h u\|_{L^2(\Gamma)} \le c h^{\min\{3,-1+2\bar\lambda\}-2\varepsilon}
 \]
 holds with $\bar \lambda:=\min_{j\in \mathcal{C}}\lambda_j$ and any $\varepsilon>0$, provided that the refinement condition \eqref{eq:ref_cond} is fulfilled.
\end{lemma}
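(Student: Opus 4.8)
The plan is to reduce the statement to a pure approximation-theoretic estimate for the $L^2(\Gamma)$-projection, exploiting that on boundary concentrated meshes the induced partition of $\Gamma$ is quasi-uniform with mesh size of order $h^2$. Indeed, by the refinement condition \eqref{eq:ref_cond} every element $T$ touching the boundary has diameter $h_T \sim h^2$, so each boundary edge $E \subset \Gamma$ has length $|E| \sim h^2$, and neighbouring boundary edges are comparable in size. Hence the trace space $V_h^\partial$ is a continuous piecewise linear finite element space on a quasi-uniform mesh of $\Gamma$ with mesh parameter $h_\Gamma \sim h^2$, and $Q_h$ is the $L^2(\Gamma)$-orthogonal projection onto it. In particular $Q_h u$ realizes the best $L^2(\Gamma)$-approximation, so it suffices to bound $\inf_{v_h \in V_h^\partial}\|u - v_h\|_{L^2(\Gamma)}$.

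Next I would invoke the regularity of the control from Lemma~\ref{lem:regularity}. Since $y_d \in H^1(\Omega)$, that lemma yields $u \in H^{s-1/2}(\Gamma)$ for every $s < \min\{2,\bar\lambda\}$, so that $u$ belongs to $H^\tau(\Gamma)$ with $\tau := s - 1/2 \in (0, 3/2)$. On the quasi-uniform boundary mesh of size $h_\Gamma$, a standard approximation estimate for continuous piecewise linears then gives
\[
\|u - Q_h u\|_{L^2(\Gamma)} \le \inf_{v_h \in V_h^\partial}\|u - v_h\|_{L^2(\Gamma)} \le c\, h_\Gamma^{\tau}\, \|u\|_{H^{\tau}(\Gamma)},
\qquad \tau \in [0,2].
\]
Because $\tau < 3/2 < 2$, the full approximation order of the piecewise linear space is available. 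The fractional-order estimate can be obtained, e.g., from a Scott--Zhang (or Cl\'ement) quasi-interpolant, whose $L^2$-error is bounded by $c\,h_\Gamma^2$ for $H^2$-data and by $c$ for $L^2$-data; real interpolation of the operator $\mathrm{Id}-\Pi_h$ between these endpoints, together with $H^\tau(\Gamma) = [L^2(\Gamma), H^2(\Gamma)]_{\tau/2}$, furnishes the intermediate exponent $\tau$.

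Finally I would insert $h_\Gamma \sim h^2$ and choose $s = \min\{2,\bar\lambda\} - \varepsilon$, which gives $\tau = s - 1/2$ and the exponent
\[
2\tau = 2\big(\min\{2,\bar\lambda\} - \varepsilon\big) - 1 = \min\{3,\, 2\bar\lambda - 1\} - 2\varepsilon = \min\{3,\, -1 + 2\bar\lambda\} - 2\varepsilon,
\]
so that $\|u - Q_h u\|_{L^2(\Gamma)} \le c\, h^{\min\{3,\,-1+2\bar\lambda\} - 2\varepsilon}\,\|u\|_{H^{s-1/2}(\Gamma)}$, as claimed; the hidden $H^{s-1/2}(\Gamma)$-norm is controlled by the constant in Lemma~\ref{lem:regularity}.

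The main obstacle is the clean justification of the \emph{fractional} approximation estimate on $\Gamma$, i.e.\ that the best $L^2(\Gamma)$-approximation by $V_h^\partial$ converges with the full order $\tau$ up to (but excluding) the smoothness $3/2$ dictated by the corner singularities of $\partial_n p$. All other ingredients---the identification of the induced boundary mesh as quasi-uniform of size $h^2$ and the minimization property of $Q_h$---are routine once \eqref{eq:ref_cond} is exploited. An alternative, avoiding fractional Sobolev spaces on $\Gamma$, would be to localize with the dyadic decomposition used earlier and employ the weighted regularity $p \in V^{3,2}_{\vec\gamma}(\Omega)$ (hence $u = \alpha^{-1}\partial_n p$ in a weighted space on $\Gamma$) together with edgewise integer-order projection estimates; this reproduces the same rate but is more technical.
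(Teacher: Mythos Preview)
Your proposal is correct and follows essentially the same approach as the paper: the paper's proof merely states that the assertion follows from standard estimates for the $L^2(\Gamma)$-projection combined with the regularity $u\in H^{s-1/2}(\Gamma)$ from Lemma~\ref{lem:regularity} and the fact that $|E|\sim h^2$. You have simply fleshed out these ``standard estimates'' (quasi-uniformity of the induced boundary mesh, best-approximation property of $Q_h$, fractional-order interpolation), which is exactly what the paper leaves implicit.
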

\begin{proof}
The assertion follows from standard estimates for the $L^2(\Gamma)$-projection 
using the regularity result from Lemma \ref{lem:regularity} as well as 
$|E|\sim h^2$.
\end{proof}
\begin{lemma}\label{lem:second}
 Assume that $y_d\in H^1(\Omega)$.
 Let $(y,u)$ be the optimal state and control solving \eqref{eq:optimality}.
 Then, there holds the error estimate
 \[
 \|y-B_h Q_h u\|_{L^2(\Omega)} \le c h^{\min\{2,-1+2\bar \lambda -2\varepsilon\}}
 \]
 with $\bar \lambda:=\min_{j\in \mathcal{C}}\lambda_j$ and any $\varepsilon>0$, provided that the refinement condition \eqref{eq:ref_cond} is satisfied.
\end{lemma}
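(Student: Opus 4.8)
The plan is to estimate $\|y-B_hQ_hu\|_{L^2(\Omega)}=\sup_{\|\varphi\|_{L^2(\Omega)}=1}(y-B_hQ_hu,\varphi)_{L^2(\Omega)}$ by an Aubin--Nitsche duality argument. For fixed $\varphi$ I would let $z\in H^1_0(\Omega)$ solve the dual problem $-\Delta z=\varphi$ in $\Omega$, $z=0$ on $\Gamma$, and interpolate it by $I_hz\in V_{0h}$. Since $B_hQ_hu\in V_h\subset H^1(\Omega)$ is discrete harmonic, testing its defining relation with $I_hz$ together with Green's formula and the (very weak) identity $(y,-\Delta z)_{L^2(\Omega)}=-(u,\partial_n z)_{L^2(\Gamma)}$ for the harmonic state $y$ yields the error representation
\[
(y-B_hQ_hu,\varphi)_{L^2(\Omega)}=-(\nabla B_hQ_hu,\nabla(z-I_hz))_{L^2(\Omega)}-(u-Q_hu,\partial_n z)_{L^2(\Gamma)}.
\]
The crucial point is that only $\nabla B_hQ_hu$ enters the volume term, so this representation stays meaningful in the non-convex case where $y\notin H^1(\Omega)$.

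For the boundary term I would exploit the $L^2(\Gamma)$-orthogonality of $Q_h$ to subtract an arbitrary $\psi_h\in V_h^\partial$, so that $|(u-Q_hu,\partial_n z)_{L^2(\Gamma)}|\le\|u-Q_hu\|_{L^2(\Gamma)}\inf_{\psi_h}\|\partial_n z-\psi_h\|_{L^2(\Gamma)}$. The first factor is controlled by Lemma~\ref{lem:first_term}, and the second by a projection estimate on the uniformly $h^2$-fine boundary strip, using $z\in H^2(\Omega)$ in the convex case and the weighted $H^{3/2-\kappa}$-regularity already employed in Theorem~\ref{thm:weighted_estimate_nonconvex} otherwise. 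I expect this contribution to be of the target order or better.

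The volume term is treated differently in the two cases. In the convex case $y\in H^1(\Omega)$ is harmonic and $z,I_hz\in H^1_0(\Omega)$, so I may insert $\nabla y$ at no cost and rewrite the volume term as $-(\nabla(y-B_hQ_hu),\nabla(z-I_hz))_{L^2(\Omega)}$. With $\|\nabla(z-I_hz)\|_{L^2(\Omega)}\le ch\|z\|_{H^2(\Omega)}\le ch$ this reduces everything to the energy error $\|\nabla(y-B_hQ_hu)\|_{L^2(\Omega)}$, which I would bound by the quasi-best approximation property $\|\nabla(y-B_hQ_hu)\|_{L^2(\Omega)}\le\inf\{\|\nabla(y-v_h)\|_{L^2(\Omega)}:v_h\in V_h,\ v_h|_\Gamma=Q_hu\}$. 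Choosing $v_h=\Pi_hy+\tilde B_h(Q_hu-(\Pi_hy)|_\Gamma)$ with a Scott--Zhang type quasi-interpolant $\Pi_h$ and the zero-extension $\tilde B_h$ from \eqref{eq:stability_tilde_Bh}, the error splits into an interior interpolation error and an $h^{-1}$-weighted boundary mismatch; both are estimated through the weighted interpolation bound \eqref{eq:int_error_Sh}, the refinement condition \eqref{eq:ref_cond}, the regularity $y\in V^{2,2}_{\vec\gamma}(\Omega)$ from Lemma~\ref{lem:regularity}, and once more Lemma~\ref{lem:first_term}. In the non-convex case I cannot insert $\nabla y$; instead I would localise on the dyadic subsets $\Omega_J$, use $\|\nabla(z-I_hz)\|_{L^2(\Omega_J)}\le chd_J^{1/2}\|\nabla^2 z\|_{L^2(\Omega_J')}$ and pair it in a weighted Cauchy--Schwarz inequality against a weighted discrete energy of $B_hQ_hu$, controlling $\nabla^2 z$ by Lemma~\ref{lem:weighted_reg_h2} and the weighted gradient of $B_hQ_hu$ through $y\in V^{2,2}_{\vec\gamma}(\Omega)$.

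I expect the non-convex volume term to be the main obstacle: there $\nabla B_hQ_hu$ itself is unbounded as $h\to0$ (since $y\notin H^1(\Omega)$), so the corner growth must be absorbed by the weight $\sigma$, which is precisely where the dyadic decomposition, the weighted regularity of Lemmas~\ref{lem:weighted_reg_h1} and \ref{lem:weighted_reg_h2}, and the refinement condition \eqref{eq:ref_cond} are indispensable. A second delicate point is the matching of the non-interpolatory boundary data $Q_hu$ through $\tilde B_h$, where the factor $h^{-1}$ in \eqref{eq:stability_tilde_Bh} has to be compensated by the high local approximation order on the $h^2$-fine boundary strip. Collecting the bounds for both terms and taking the supremum over $\varphi$ then yields the asserted rate $h^{\min\{2,-1+2\bar\lambda-2\varepsilon\}}$ after inserting the regularity and convergence results of Lemma~\ref{lem:regularity}, Theorem~\ref{thm:weighted_estimate} and Theorem~\ref{thm:weighted_estimate_nonconvex}.
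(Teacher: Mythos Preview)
Your error representation and the convex-case argument are essentially the paper's, modulo cosmetic changes (the paper takes $-\Delta z=y-B_hQ_hu$ rather than a generic $\varphi$, replaces $I_hz$ by the Ritz projection $z_h$, and organises the best-approximation competitor as $I_hy+\tilde B_h(I_h-Q_h)u$, but the underlying ingredients are the same).

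In the non-convex case, however, the paper takes a very different and much shorter route than your dyadic plan. Using the very weak identity for $y$, the discrete-harmonic property of $B_hQ_hu$, the orthogonality of $Q_h$, and the definition \eqref{eq:discrete_normal_adjoint} of $\partial_n^h$, one obtains directly
\[
\|y-B_hQ_hu\|_{L^2(\Omega)}^2=(u,\partial_n^h z_h-\partial_n z)_{L^2(\Gamma)},
\]
so that the whole estimate reduces to Theorem~\ref{thm:estimate_var_normal_deriv} applied to the dual problem with right-hand side $y-B_hQ_hu$. This bypasses the volume term entirely.

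Your proposed treatment of the non-convex volume term $(\nabla B_hQ_hu,\nabla(z-I_hz))$ has a genuine gap. You correctly note that $\nabla B_hQ_hu$ is unbounded as $h\to0$; to make the pairing work you would need a weighted bound of the type $\|\sigma^{a}\nabla B_hQ_hu\|_{L^2(\Omega)}\le C$ (or a corresponding dyadic version) with an exponent $a$ compatible with the $z$-interpolation error. No such weighted stability of the discrete harmonic extension is proved in the paper, and it does not follow from $y\in V^{2,2}_{\vec\gamma}(\Omega)$ alone: that regularity controls $\nabla y$ with a \emph{corner} weight $r_j^{\gamma_j-1}$, whereas what enters your argument is $\nabla B_hQ_hu$ against the \emph{boundary-distance} weight $\sigma$, and passing from $y$ to $B_hQ_hu$ in a weighted $H^1$-sense requires exactly the kind of approximation result you are trying to prove. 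Moreover, for the outermost rings $J\in\{I-2,I-1,I\}$ the bound $\|\nabla(z-I_hz)\|_{L^2(\Omega_J)}\le chd_J^{1/2}\|\nabla^2 z\|_{L^2(\Omega_J')}$ fails because $z\notin H^2$ near the non-convex corners, so you would have to combine the $H^{3/2-\kappa}$ argument with the weighted discrete-energy bound simultaneously. Unless you can supply a weighted discrete-harmonic stability estimate for $B_hQ_hu$, this part of the plan does not close; the paper's reduction to $\partial_n z-\partial_n^h z_h$ avoids this difficulty completely. Finally, your concluding invocation of Theorems~\ref{thm:weighted_estimate} and \ref{thm:weighted_estimate_nonconvex} is not warranted by the argument you outline, since no $\|\sigma^{-3/2}(\,\cdot\,)\|_{L^2(\Omega)}$-term appears in your decomposition.
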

\begin{proof}
In order to prove the assertion, we use a duality argument. Let $z\in H_0^1(\Omega)$ solve
\[
	-\Delta z=y-B_hQ_hu\text{ in }\Omega,\quad z=0\text{ on }\Gamma.
\]
Moreover, let $z_h\in V_{0h}$ denote its Ritz-projection. In the sequel, we first assume that $\Omega$ is convex. The non-convex case is discussed at the end of the proof. 
The integration by parts formula implies
\begin{equation}\label{eq:L2-1}
	\|y-B_hQ_hu\|_{L^2(\Omega)}^2=
	(Q_h u-u,\partial_n z)_{L^2(\Gamma)} + 
	(\nabla (y-B_hQ_hu),\nabla z)_{L^2(\Omega)}.
\end{equation}
Due to the convexity of $\Omega$, we have according to a standard trace theorem and elliptic regularity
\[
	\|\partial_n z\|_{H^{1/2}(\Gamma)}\leq c\|z\|_{H^{2}(\Omega)}\leq c\|y-B_hQ_hu\|_{L^2(\Omega)}.
\]
Consequently, by using the orthogonality of the $L^2$-projection and corresponding error estimates, we get for the first term on the right hand side of \eqref{eq:L2-1}
\[
	(Q_h u-u,\partial_n z)_{L^2(\Gamma)}=(Q_h u-u,\partial_n z-Q_h\partial_n z)_{L^2(\Gamma)}\leq c h^{2} \|u\|_{H^{1/2}(\Gamma)}\|y-B_hQ_hu\|_{L^2(\Omega)},
\]
where we note that $|E|\sim h^2$. Using the properties of the harmonic extensions, the Galerkin orthogonality of $z_h$, together with the fact that $I_hy-B_hI_hu$ and $(B_h-\tilde B_h)(I_h-Q_h)u$ belong both to $V_{0h}$ ($I_h$ denotes the standard Lagrange interpolant and $\tilde B_h$ the zero extension operator into $V_h$), we obtain for the second term in \eqref{eq:L2-1}
\begin{align*}
  (\nabla (y-B_hQ_hu),\nabla z)_{L^2(\Omega)}
  &=\left(\nabla (y-B_hQ_hu),\nabla (z-z_h)\right)_{L^2(\Omega)}\\
  &=\left(\nabla (y-I_hy +\tilde B_h(I_h-Q_h)u),\nabla (z-z_h)\right)_{L^2(\Omega)}\\
  &\le c\left(\|\nabla(y-I_hy)\|_{L^2(\Omega)}+\|\nabla\tilde B_h(I_h-Q_h)u\|_{L^2(\Omega)}\right)\|\nabla(z-z_h)\|_{L^2(\Omega)}.
\end{align*}
Note that the Lagrange interpolants of $y$ and $u$ are well-posed in the present case since both functions are continuous if the domain $\Omega$ is convex. By a standard estimate for the finite element error, we directly get
\[
	\|\nabla(z-z_h)\|_{L^2(\Omega)}\le c h \|y-B_hQ_hu\|_{L^2(\Omega)}.
\]
Since the grading towards the whole boundary implies a grading towards each corner, i.e.,
\[
	h_T\le
	\begin{cases}
		ch^2 & \text{if } \dist(T,\boldsymbol c_j)=0,\\
		ch(\dist(T,\boldsymbol c_j))^{1/2} & \text{if } \dist(T,\boldsymbol c_j)>0,\\
	\end{cases}
\]
we deduce by means of \eqref{eq:int_error_Sh} if $\dist(T,\boldsymbol c_j)=0$, and a standard interpolation error estimate if $\dist(T,\boldsymbol c_j)>0$ that
\[
	\|\nabla(y-I_hy)\|_{L^2(\Omega)}\le c h^{\min\{1,2-2\max_{j\in\mathcal{C}}\gamma_j\}} \|y\|_{V^{2,2}_{\vec \gamma}(\Omega)}\le c h^{\min\{1,-2+2\bar \lambda-2\varepsilon\}} \|y\|_{V^{2,2}_{\vec \gamma}(\Omega)},
\]
where we have set $\gamma_j=\max\{0,2-\lambda_j\}+\varepsilon$ with some arbitrary $\varepsilon>0$. Employing \eqref{eq:stability_tilde_Bh} and standard estimates for the $L^2$-projection and the Lagrange interpolant (after having introduced $u$ as an intermediate function), we get for $s = \min\{2,\bar\lambda\}-\varepsilon$ and any $\varepsilon>0$
\[
	\|\nabla\tilde B_h(I_h-Q_h)u\|_{L^2(\Omega)}\le ch^{-1}\|(I_h-Q_h)u\|_{L^2(\Gamma)}\le ch^{\min\{2,-2+2\bar\lambda\}-2\varepsilon}\|u\|_{H^{s-1/2}(\Gamma)},
\]
where we used $|E|\sim h^2$. 
Putting everything together, in combination with the regularity results of Lemma \ref{lem:regularity}, we have arrived at the assertion in case of convex domains.

In the non-convex case, by using the definition of very weak solutions,
\[
	(y,-\Delta v)_{L^2(\Omega)}=-(u,\partial_n v)_{L^2(\Gamma)}\quad \forall v\in \{v\in H^1_0(\Omega):\, \Delta v\in L^2(\Omega)\},
\]
and the
definition of $\partial^h_n$ \eqref{eq:def_var_normal}, we rewrite the error
term as follows:
\begin{align*}
	\|y-B_hQ_hu\|_{L^2(\Omega)}^2
	&=(y,y-B_hQ_hu)_{L^2(\Omega)}-(B_hQ_hu,y-B_hQ_hu)_{L^2(\Omega)}\\
	&=-(u,\partial_n z)_{L^2(\Gamma)}+(Q_hu,\partial_n^h z_h)_{L^2(\Gamma)}\\
	&=(u,\partial_n^hz_h-\partial_nz)_{L^2(\Gamma)},
\end{align*}
where we used that $B_h$ represents the discrete harmonic extension operator,
and the orthogonality of $Q_h$. Thus, the desired result follows in the
present case from the estimate stated in Theorem~\ref{thm:estimate_var_normal_deriv}, in which the term $\|y-B_hQ_h
  u\|_{L^2(\Omega)}$ appears on the right-hand side again.
\end{proof}
We now state the main result for the Dirichlet boundary control problem.
\begin{theorem}\label{thm:optimal_control}
  Let $y_d\in
  H^{1}(\Omega)$. If $\Omega$ is convex, assume 
  additionally $y_d\in C^{0,\sigma}(\overline\Omega)$, $\sigma\in (0,1)$. 
  Moreover, let $(y,u)$ and $(y_h,u_h)$ 
  denote the solutions of \eqref{eq:optimality} and \eqref{eq:discrete_opt_cond}, respectively.
  If the sequence of computational meshes satisfies the refinement condition \eqref{eq:ref_cond}, the estimate
  \begin{align*}
     \|u-u_h\|_{L^2(\Gamma)} + \|y-y_h\|_{L^2(\Omega)} \le c h^{\min\{2,-1+2\bar\lambda-2\varepsilon\}}\lnh^{3/2}
  \end{align*}
  is valid with $\bar\lambda :=\min_{j\in\C}\lambda_j$ and any $\varepsilon>0$.
\end{theorem}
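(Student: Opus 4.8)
The plan is to reduce the theorem to the three building blocks already established, so that the proof amounts to reading off the slowest of the contributing convergence rates. Starting from the basic a priori estimate \eqref{eq:basic_estimate} combined with \eqref{eq:adjointnormal}, it suffices to bound the three quantities $\|u - Q_h u\|_{L^2(\Gamma)}$, $\|y - B_h Q_h u\|_{L^2(\Omega)}$ and $\|\partial_n p - \partial_n^h R_h p\|_{L^2(\Gamma)}$ separately and to insert the resulting orders into \eqref{eq:basic_estimate}.

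The first two terms I would dispose of directly. Lemma~\ref{lem:first_term} yields $\|u - Q_h u\|_{L^2(\Gamma)} \le c\,h^{\min\{3,-1+2\bar\lambda\}-2\varepsilon}$, which converges at least as fast as the claimed rate (and strictly faster whenever $\bar\lambda > 3/2$), so this contribution is harmless. Lemma~\ref{lem:second} gives $\|y - B_h Q_h u\|_{L^2(\Omega)} \le c\,h^{\min\{2,-1+2\bar\lambda-2\varepsilon\}}$, which already realizes the target order. Thus the overall rate is governed by the second and the third term.

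The crucial observation for the third term is that the adjoint state $p$ solving $-\Delta p = y - y_d$ with $p|_\Gamma = 0$ is exactly the weak solution of \eqref{eq:weak_form_1} for the datum $f := y - y_d$, and that its Ritz projection $R_h p$ coincides with the finite element solution \eqref{eq:fe_formulation} for this datum, since $(\nabla R_h p, \nabla v_h)_{L^2(\Omega)} = (\nabla p, \nabla v_h)_{L^2(\Omega)} = (y-y_d, v_h)_{L^2(\Omega)}$ for all $v_h\in V_{0h}$. Consequently $\partial_n^h R_h p$ is precisely the discrete variational normal derivative \eqref{eq:def_var_normal} associated with $f$, and Theorem~\ref{thm:estimate_var_normal_deriv} applies verbatim to yield $\|\partial_n p - \partial_n^h R_h p\|_{L^2(\Gamma)} \le c\,h^{\min\{2,-1+2\bar\lambda-\varepsilon\}}\lnh^{3/2}$ times $\|y-y_d\|_{C^{0,\sigma}(\overline\Omega)}$ (convex) respectively $\|y-y_d\|_{L^2(\Omega)}$ (non-convex). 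Taking the minimum of the three rates then produces the asserted estimate, with both the $\lnh^{3/2}$ factor and the reduced exponent $-1+2\bar\lambda-2\varepsilon$ originating from this adjoint contribution.

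The point requiring the most care is precisely the verification that Theorem~\ref{thm:estimate_var_normal_deriv} is applicable with $f = y - y_d$, that is, that this datum has the smoothness the theorem demands. In the non-convex case this is immediate, as $y \in H^s(\Omega)\subset L^2(\Omega)$ by Lemma~\ref{lem:regularity} and $y_d \in H^1(\Omega)\subset L^2(\Omega)$. In the convex case one needs $y - y_d \in C^{0,\sigma}(\overline\Omega)$: the hypothesis supplies $y_d \in C^{0,\sigma}(\overline\Omega)$ directly, while for $y$ one invokes Lemma~\ref{lem:regularity} with $\bar\lambda > 1$, which permits a smoothness index $s\in(1,\min\{2,\bar\lambda\})$ and hence the two-dimensional Sobolev embedding $H^s(\Omega)\hookrightarrow C^{0,s-1}(\overline\Omega)$. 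Thus $y-y_d$ is H\"older continuous for a suitable exponent, and the norms $\|y-y_d\|_{C^{0,\sigma}(\overline\Omega)}$ and $\|y-y_d\|_{L^2(\Omega)}$ are bounded in terms of the fixed problem data and may be absorbed into the generic constant $c$.
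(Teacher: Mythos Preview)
Your proof is correct and follows exactly the same route as the paper's own argument, which simply cites \eqref{eq:basic_estimate}, \eqref{eq:adjointnormal}, Lemmas~\ref{lem:first_term} and~\ref{lem:second}, and Theorem~\ref{thm:estimate_var_normal_deriv}. You have in fact supplied a detail the paper leaves implicit: the verification that $f=y-y_d$ meets the regularity hypotheses of Theorem~\ref{thm:estimate_var_normal_deriv}, in particular the H\"older continuity of $y$ in the convex case via $H^s(\Omega)\hookrightarrow C^{0,s-1}(\overline\Omega)$ for $1<s<\min\{2,\bar\lambda\}$.
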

\begin{proof}
	Due to \eqref{eq:basic_estimate} and \eqref{eq:adjointnormal}, the result is a consequence of Lemmas \ref{lem:first_term} and \ref{lem:second}, and Theorem~\ref{thm:estimate_var_normal_deriv}.
\end{proof}

\subsection{Numerical experiments}\label{sec:experiments1}
The following experiments are similar to those from Section~\ref{sec:experiments}.
We consider the domains $\Omega_\omega$ with $\omega\in\{2\pi/3,3\pi/4,3\pi/2\}$.
The largest singular exponent is denoted by $\bar\lambda:=\pi/\omega$.
Using polar coordinates $(r,\varphi)$ located at the origin, the exact solution of our benchmark problem is set to
\begin{align*}
	y(x_1,x_2) &:= -\bar\lambda r^{\bar\lambda-1}(x_1,x_2)(1-x_1^2)(1-x_2^2) + 2 r^{\bar\lambda}(x_1,x_2) \sin(\lambda \varphi(x_1,x_2))(x_1^2+x_2^2-2)\\
	p(x_1,x_2) &:= r^{\bar\lambda}(x_1,x_2) \sin(\bar\lambda\varphi(x_1,x_2))(1-x_1^2)(1-x_2^2).
\end{align*}
Note, that the function $p$ fulfills homogeneous Dirichlet boundary conditions.
The function $y$ is not harmonic and hence, we consider instead the state equation
\[
	-\Delta y = f\quad\mbox{in}\quad \Omega
\]
with some $f$ which can be computed by means of $y$. The desired state $y_d$ can be computed from the adjoint equation taking into account the definitions of $p$ and $y$.
With a simple computation we easily confirm that the optimality condition $u=\alpha^{-1} \partial_n p$ is fulfilled. In this experiment the regularization parameter is chosen to satisfy $\alpha=1$.
Note that we considered $f\equiv 0$ in the theory, but the results derived in Theorem \ref{thm:optimal_control} hold for the inhomogeneous case as well. The meshes are reused from the experiments in Section~\ref{sec:experiments}, see also Figure~\ref{fig:meshes}.
The optimality condition of the discretized problem, more precisely the equation
\[
	(\alpha u_h - \partial_n^h p_h,w_h)_{L^2(\Gamma)} = 0\qquad \forall w_h\in V_h^\partial,
\]
with $p_h\in V_{0h}$ as the solution of
\[
	\begin{aligned}
		p_h\in V_{0h}\colon &&(\nabla v_h,\nabla p_h)_{L^2(\Omega)} &= (y_h-y_d,v_h)_{L^2(\Omega)} && \forall v_h\in V_{0h},\\
		y_h\in V_h\colon &y_h= u_h\ \mbox{on}\ \Gamma, & (\nabla y_h,\nabla v_h)_{L^2(\Omega)} &= (f,v_h)_{L^2(\Omega)} && \forall v_h\in V_{0h},
	\end{aligned}
\]
has been solved with the GMRES method. Moreover, the linear solver MUMPS has been used to compute $y_h$ from $u_h$ and $p_h$ from $y_h$.

The results of the numerical tests are summarized in Table \ref{tab:experiment} for $\omega\in\{2\pi/3, 3\pi/4, 3\pi/2\}$.
These experiments confirm that the discrete controls converge with the rate $2$ when the interior angles are less than $120^\circ$.
For larger angles the convergence rate is reduced. For $\omega=3\pi/4$ and $\omega=3\pi/2$, we have proven a rate close to $5/3$ and $1/3$, respectively. The observed convergence rates are in agreement with the predicted ones. As often in optimal control, in case that full order of convergence is no longer achievable by the discrete controls, the discrete states still converge with a higher rate than predicted by the theory derived via the optimality conditions. For similar observations, we also refer to \cite{APR13,APW14} in case of Neumann control problems and \cite{MRV13,AMPR16} in case of Dirichlet boundary control problems.
A comparison of the error propagation between quasi-uniform meshes and meshes with boundary refinement is illustrated in Figure~\ref{fig:eoc}, also for the domains with $\omega\in\{\pi/2, 5\pi/4, 7\pi/4\}$.
For a sufficiently fine initial mesh, the error is always smaller for boundary concentrated meshes.

\begin{table}[htbp]
	\setlength{\tabcolsep}{4pt}
	\begin{tabular}{rrrrrrr}
		\toprule
		& \multicolumn{2}{c}{$\omega=120^\circ$} & \multicolumn{2}{c}{$\omega=135^\circ$} & \multicolumn{2}{c}{$\omega=270^\circ$} \\
		\cmidrule(lr{1em}){2-3} 
		\cmidrule(lr{1em}){4-5} 
		\cmidrule(lr{1em}){6-7} 
		\multicolumn{1}{l}{$N$} & \multicolumn{1}{l}{$\|u-u_h\|_{L^2(\Gamma)}$} & \multicolumn{1}{l}{$\|y-y_h\|_{L^2(\Omega)}$} & \multicolumn{1}{l}{$\|u-u_h\|_{L^2(\Gamma)}$} & \multicolumn{1}{l}{$\|y-y_h\|_{L^2(\Omega)}$} & \multicolumn{1}{l}{$\|u-u_h\|_{L^2(\Gamma)}$} & \multicolumn{1}{l}{$\|y-y_h\|_{L^2(\Omega)}$} \\ 
		\midrule
		$6$ & 4.97e-2 (1.90) & 6.07e-3 (2.64) & 6.26e-2 (1.87) & 7.38e-3 (2.67) & 4.69e-1 (0.28) & 2.08e-1 (0.67) \\ 
		$8$ & 1.27e-2 (1.97) & 1.04e-3 (2.54) & 1.64e-2 (1.93) & 1.19e-3 (2.63) & 3.93e-1 (0.25) & 1.33e-1 (0.65) \\ 
		$10$ & 3.24e-3 (1.97) & 2.26e-4 (2.20) & 4.31e-3 (1.93) & 2.40e-4 (2.31) & 3.23e-1 (0.28) & 8.45e-2 (0.65) \\ 
		$12$ & 8.13e-4 (2.00) & 5.15e-5 (2.14) & 1.14e-3 (1.92) & 5.30e-5 (2.18) & 2.62e-1 (0.30) & 5.37e-2 (0.65) \\ 
		$14$ & 2.05e-4 (1.99) & 1.29e-5 (2.00) & 3.08e-4 (1.89) & 1.31e-5 (2.02) & 2.10e-1 (0.32) & 3.40e-2 (0.66) \\ 
		$16$ & 5.15e-5 (1.99) & 5.15e-5 (1.99) & 8.52e-5 (1.85) & 3.24e-6 (2.01) & 1.68e-1 (0.32) & 2.15e-2 (0.66) \\ 
		\bottomrule
	\end{tabular}
	\caption{Absolute error and experimental convergence rate for the discrete solution of \eqref{eq:discrete_opt_cond}, for the domains $\Omega_\omega$, $\omega\in\{2\pi/3, 3\pi/4, 3\pi/2\}$.}
	\label{tab:experiment}
\end{table}

\begin{figure}[htb]
	\begin{center}
		\includegraphics[width=.7\textwidth]{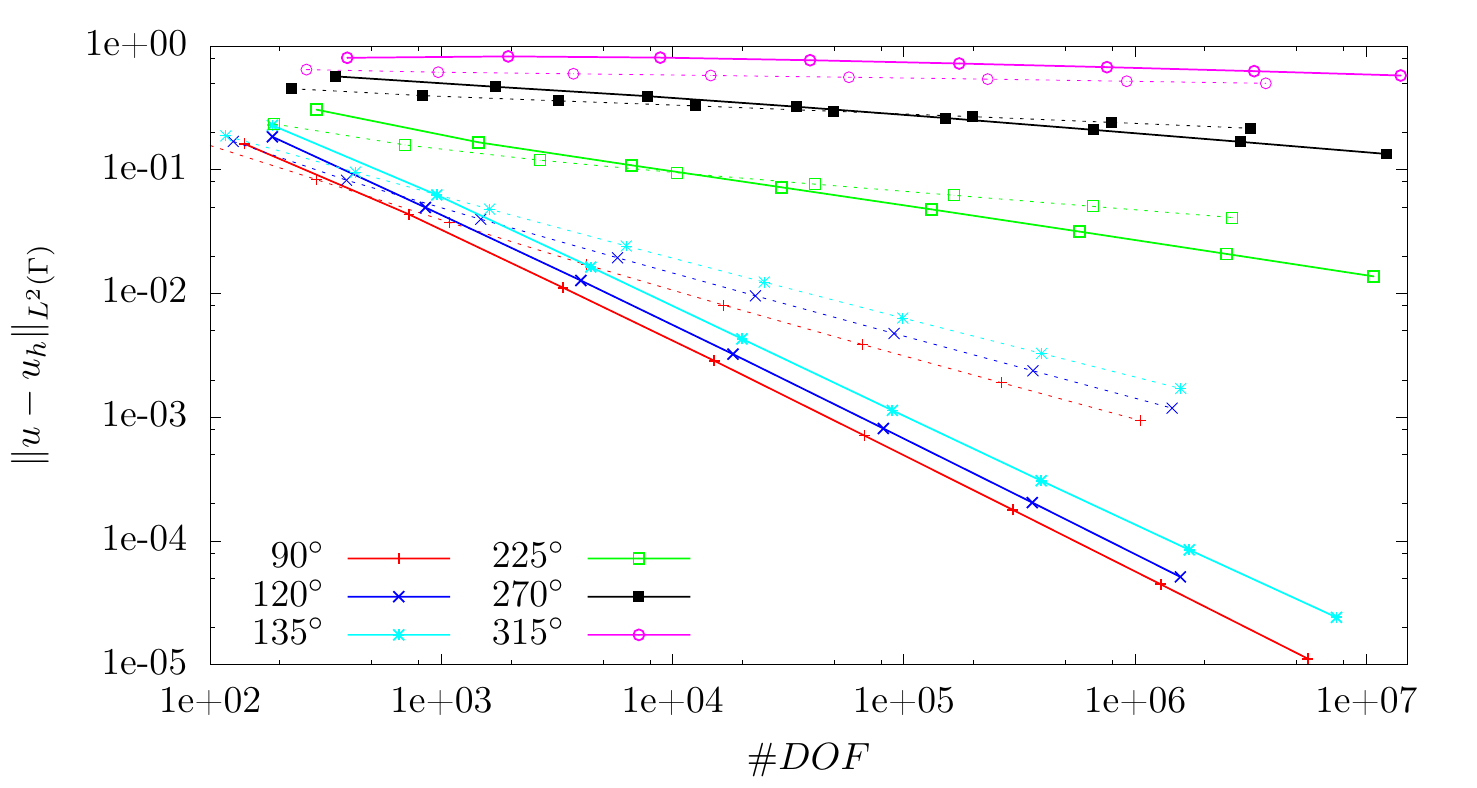}
	\end{center} 
	\caption{Convergence plot illustrating the approximation error on several domains for quasi-uniform (dashed lines) and boundary concentrated meshes (solid lines).}
	\label{fig:eoc}
\end{figure}

\bibliographystyle{plain}
\bibliography{bibliography2}
\end{document}